\newcommand{\ii}{\mathfrak{i}}
\newcommand{\Z}{\mathbb{Z}}
\newcommand{\R}{\mathbb{R}}
\newcommand{\C}{\mathbb{C}}
\newcommand{\Cp}{\C^{\times}}
\newcommand{\Rp}{\R^{\times}}
\newcommand{\N}{\mathbb{N}}
\newcommand{\T}{\mathbb{T}}
\newcommand{\8}{\infty}
\newcommand{\spa}{\mathrm{span}}
\newcommand{\re}{\mathrm{Re~}}
\newcommand{\diag}{\mathrm{diag}}
\newcommand{\Int}{\mathrm{int}}
\newcounter{dummy} \numberwithin{dummy}{section}
\newtheorem{theorem}[dummy]{Theorem}
\newtheorem{lemma}[dummy]{Lemma}
\newtheorem{proposition}[dummy]{Proposition}
\newtheorem{corollary}[dummy]{Corollary}
\newtheorem{question}[dummy]{Question}
\theoremstyle{remark}
\newtheorem{remark}[dummy]{Remark}
\newtheorem{example}[dummy]{Example}
\begin{document}

\title{Principal Minor Assignment, Isometries of Hilbert Spaces, Volumes of Parallelepipeds and Rescaling of Sesqui-holomorphic Functions}
%% Group authors per affiliation:
\author{Eugene Bilokopytov\footnote{Email address bilokopi@myumanitoba.ca, erz888@gmail.com.}}
\maketitle

\begin{abstract}In this article we consider the following equivalence relation on the class of all functions of two variables on a set $X$: we will say that $L,M: X\times X\to \C$ are rescalings if there are non-vanishing functions $f,g$ on $X$ such that $M\left(x,y\right)=f\left(x\right)g\left(y\right) L\left(x,y\right)$, for any $x,y\in X$. We give criteria for being rescalings when $X$ is a topological space, and $L$ and $M$ are separately continuous, or when $X$ is a domain in $\C^{n}$ and $L$ and $M$ are sesqui-holomorphic.

A special case of interest is when $L$ and $M$ are symmetric, and $f=g$ only has values $\pm 1$. This relation between $M$ and $L$ in the case when $X$ is finite (and so $L$ and $M$ are square matrices) is known to be characterized by the equality of the principal minors of these matrices. We extend this result to the case when $X$ is infinite. As an application we characterize restrictions of isometries of Hilbert spaces on weakly connected sets as the maps that preserve the volumes of parallelepipeds spanned by finite collections of vectors.\medskip

\emph{Keywords:} Reproducing Kernel Hilbert Spaces; rescalings of sesqui-holomorphic functions; Principal Minor; Volumes of Parallelepipeds; Isometries of Hilbert Spaces.

MSC2010 15A15, 46E22, 51M25.
\end{abstract}

\textbf{This is an extended version of the article by the same title \href{https://authors.elsevier.com/a/1ZIKK5YnCbdlb}{published} in Linear Algebra and its Applications.}

\section{Introduction}

The following well-known open problem in algebra is called Principal Minor Assignment Problem.

\begin{question}
What are the necessary and sufficient conditions for a collection of $2^{n}$ complex numbers to be the set of the principal minors of a $n\times n$ matrix?
\end{question}

For some recent developments in relation to this question see e.g. \cite{oeding} and \cite{rkt}. An adjacent to the Principal Minor Assignment Problem is the following easier question.

\begin{question}\label{q}
What is the relationship between two $n\times n$ matrices having equal corresponding principal minors of all orders?
\end{question}

It turns out that the answer very much depends on certain additional assumptions about the matrices (see e.g. \cite{loewy}). In particular, in the class of complex symmetric matrices the following characterization holds.

\begin{theorem}[\cite{es}]\label{simila}
If two complex symmetric $n\times n$ matrices $L$ and $M$ have equal corresponding principal minors of all orders, then there is a diagonal matrix $D$ of order $n$ with diagonal entries in $\left\{-1,1\right\}$ such that $L=DMD^{-1}$.
\end{theorem}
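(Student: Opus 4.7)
My plan is to reduce the statement to a graph-theoretic cocycle-trivialization problem on the support graph of $M$, and then verify the relevant cocycle condition cycle-by-cycle via the principal minor equalities. The $1\times 1$ and $2\times 2$ principal minors immediately give $L_{ii}=M_{ii}$ and, using $L_{ji}=L_{ij}$, also $L_{ij}^{2}=M_{ij}^{2}$; thus at each off-diagonal position I may write $L_{ij}=\epsilon_{ij}M_{ij}$ with $\epsilon_{ij}=\epsilon_{ji}\in\{-1,+1\}$ (the value being arbitrary when $M_{ij}=0$). Let $G$ denote the graph on $\{1,\ldots,n\}$ whose edges are the pairs $\{i,j\}$ with $M_{ij}\neq 0$. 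The desired conclusion $L=DMD^{-1}$ with $D=\diag(\epsilon_{1},\ldots,\epsilon_{n})$ amounts to saying that $\epsilon\colon E(G)\to\{\pm 1\}$ is a coboundary: $\epsilon_{ij}=\epsilon_{i}\epsilon_{j}$ on every edge. A standard fact is that this holds if and only if $\prod_{e\in C}\epsilon_{e}=1$ for every cycle $C$ in $G$.

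I would verify the cycle condition in two stages. For a chordless cycle $C=v_{1}v_{2}\cdots v_{k}v_{1}$ in $G$, I consider the $k\times k$ principal minor on $S=\{v_{1},\ldots,v_{k}\}$. In the Leibniz expansion of $\det M_{S}$, a nonzero summand corresponds to a permutation $\sigma$ whose non-fixed-point cycles use only edges of the induced subgraph $G[S]$. Since $G[S]=C$ has no chords, each such cycle of $\sigma$ of length at least $2$ must be either a transposition along an edge of $C$ or one of the two $k$-cycles traversing $C$ in opposite directions (which contribute equally by the symmetry of $M$). Every summand except these two $k$-cycle ones involves only diagonal entries and squared off-diagonal entries, which agree between $L$ and $M$ by the previous step; hence $\det L_{S}=\det M_{S}$ forces the $k$-cycle contributions to agree, yielding $\epsilon_{v_{1}v_{2}}\epsilon_{v_{2}v_{3}}\cdots\epsilon_{v_{k}v_{1}}=1$, as the common cycle weight $M_{v_{1}v_{2}}\cdots M_{v_{k}v_{1}}$ is nonzero. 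For a cycle $C$ with a chord $\{v_{1},v_{p}\}$, the chord splits $C$ into two strictly shorter cycles sharing only that chord; an induction on the cycle length then deduces the condition on $C$ from the conditions on the two shorter pieces, as the repeated chord contributes $\epsilon^{2}=1$.

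With the cocycle condition established, the signs $\epsilon_{i}$ are constructed component-by-component in $G$: fix a spanning tree, set an arbitrary root's sign to $+1$, and propagate along tree edges via $\epsilon_{j}=\epsilon_{ij}\epsilon_{i}$; the cycle condition ensures consistency on non-tree edges, and on isolated vertices the sign may be chosen arbitrarily. The main obstacle I anticipate is the chordless-cycle step: one must carefully account for the cycle structure of every permutation in the Leibniz expansion, and argue that in the absence of chords no permutation can contribute a spurious ``twisted'' $k$-cycle product distinct from the one coming from $C$. The chordless assumption is essential here, and the chord-splitting reduction is what makes the whole induction go through.
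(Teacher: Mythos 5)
Your proof is correct and follows essentially the same route as the paper's: extract $L_{ii}=M_{ii}$ and $L_{ij}^{2}=M_{ij}^{2}$ from the order-$1$ and order-$2$ minors, verify the sign-product condition on chordless (induced) cycles using the principal minor supported on the cycle, extend to arbitrary cycles by chord-splitting induction, and then propagate signs through each component from a base vertex. The only local difference is that you prove the induced-cycle step by analyzing the permutation-cycle structure in the Leibniz expansion, whereas the paper establishes the equivalent determinant identity (Lemma \ref{matrix}) by an inductive recursion on the cyclic tridiagonal determinant; both deliver the same key fact.
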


The proof of this fact given in \cite{es} and \cite{rkt} is mostly combinatorial and heavily exploits an auxiliary object -- the \emph{graph of a matrix}. Namely, the graph of a symmetric $n\times n$ matrix $L=\left[l_{ij}\right]_{i,j=1}^{n}$ is a graph with vertices in $\left\{1,...,n\right\}$ and $\left(i,j\right)$ being an edge if $l_{ij}\ne 0$. Hence, the indices $i$ and $j$ are treated as points of a certain space, rather than numbers. Also, the proof does not rely much on finiteness of $\left\{1,...,n\right\}$, which motivates us to consider the following type of an object.

Let $X$ be a set. We will call complex-valued functions defined on $X\times X$ \emph{bi-functions}. Note that a bi-function on a finite set $X=\left\{1,...,n\right\}$ is a square $n\times n$ matrix. One can introduce the notion of the graph of a bi-function in an analogous way to the graph of a matrix. In order to state an analogue of Theorem \ref{simila} we need to find the concepts corresponding to the principal minors and the diagonal similarity. Namely, for a bi-function $L$ on $X$ and $x_{1},...,x_{n}\in X$ denote $\det_{L}\left(x_{1},...,x_{n}\right)=\det\left[L\left(x_{i},x_{j}\right)\right]_{i,j=1}^{n}$. A notion analogous to the diagonal similarity is the following. We will say that a non-vanishing function $f$ on $X$ \emph{reciprocally rescales} a bi-function $L$ on $X$ to a bi-function $M$, if $M\left(x,y\right)=f\left(x\right)\frac{1}{f\left(y\right)} L\left(x,y\right)$, for all $x,y\in X$. We then say that $L$ and $M$ are \emph{reciprocal rescalings}. It is easy to see that this definition generalizes the condition that appears in Theorem \ref{simila}. In fact, the theorem holds in this infinite context verbatim.

\begin{theorem}\label{simil}
Symmetric bi-functions $L$ and $M$ on a set $X$ are reciprocal rescalings of each other if and only if $\det_{L}\left(x_{1},...,x_{n}\right)=\det_{M}\left(x_{1},...,x_{n}\right)$, for any $x_{1},...,x_{n}\in X$.
\end{theorem}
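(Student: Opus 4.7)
The plan is to handle the two implications separately. The forward direction is direct: if $M(x,y) = f(x) f(y)^{-1} L(x,y)$, then for any finite tuple $x_1, \ldots, x_n \in X$ and $D = \diag(f(x_1), \ldots, f(x_n))$, we have $[M(x_i,x_j)]_{i,j=1}^n = D\,[L(x_i,x_j)]_{i,j=1}^n\,D^{-1}$, and these matrices share determinants. All the work lies in the converse.

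For the converse, I would first extract structural information from low-order minors: $L(x,x) = M(x,x)$ from $1\times 1$ minors, and $L(x,y)^2 = M(x,y)^2$ from $2\times 2$ minors (using symmetry). Consequently $L$ and $M$ have the same zero set off the diagonal, and there is a well-defined symmetric sign function $\varepsilon(x,y) \in \{\pm 1\}$ with $M(x,y) = \varepsilon(x,y) L(x,y)$ whenever $L(x,y) \neq 0$. The task then reduces to producing $f : X \to \{\pm 1\}$ satisfying $\varepsilon(x,y) = f(x) f(y)$ on every such pair; since $f$ has values in $\{\pm 1\}$, $f(y) = f(y)^{-1}$ and this is equivalent to the reciprocal rescaling identity.

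Next, I would construct $f$ along the graph $G$ whose vertex set is $X$ and whose edges are pairs $\{x,y\}$ with $L(x,y) \neq 0$. Treating each connected component of $G$ independently, I would fix a base point $x_0$, set $f(x_0) = 1$, and for any other vertex $x$ choose a path $x_0 = y_0, y_1, \ldots, y_k = x$ and define $f(x) = \prod_{i=1}^k \varepsilon(y_{i-1}, y_i)$; isolated vertices are assigned $f = 1$. Once this assignment is shown to be well-defined, the desired identity $\varepsilon(x,y) = f(x) f(y)$ on any edge $(x,y)$ follows immediately, because appending $(x,y)$ to a path terminating at $x$ yields $f(y) = f(x) \varepsilon(x,y)$.

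The main obstacle is precisely this well-definedness, namely showing $\prod_{i=1}^k \varepsilon(y_{i-1}, y_i) = 1$ around every closed walk $y_0, y_1, \ldots, y_k = y_0$ in $G$. To overcome it I would invoke the finite-dimensional Theorem~\ref{simila} on the finite set $S = \{y_0, \ldots, y_{k-1}\}$: the restrictions of $L$ and $M$ to $S \times S$ are symmetric matrices with equal corresponding principal minors, so there exists $f_S : S \to \{\pm 1\}$ with $\varepsilon(y_i,y_j) = f_S(y_i) f_S(y_j)$ on all edges within $S$; substituting into the cycle product makes the interior factors $f_S(y_i)^2 = 1$ telescope, leaving $f_S(y_0) f_S(y_k) = 1$. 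Thus the genuinely new content of the infinite version is a local-to-global gluing along paths in $G$; once that scaffold is in place, the finite theorem does all the heavy lifting.
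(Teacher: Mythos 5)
Your proof is correct, and its overall scaffold coincides with the paper's: both extract $\widehat{L}=\widehat{M}$ and $L^{2}=M^{2}$ from the minors of orders $1$ and $2$, work component-by-component in the graph $X_{L}$, define $f$ by multiplying the signs $\varepsilon$ along a path from a base point, and reduce everything to the claim that the sign product around every closed walk equals $1$. Where you genuinely diverge is in how that cycle condition is established. The paper (proving the equivalent Theorem \ref{similar}) re-derives it from scratch: Lemma \ref{matrix}, an explicit determinant identity for matrices supported on a cycle, handles induced cycles, and an induction using chords extends the claim to arbitrary cycles. You instead invoke the finite-dimensional Theorem \ref{simila} of Engel and Schneider as a black box on the vertex set $S$ of the walk and let the resulting $f_{S}$ telescope; since that theorem is quoted from \cite{es} as a known result, this is not circular, and your telescoping argument is valid even when the walk repeats vertices. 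Your route buys a shorter, conceptually cleaner argument that isolates the genuinely new content of the infinite statement as a pure local-to-global gluing along paths. What the paper's self-contained route buys in exchange is quantitative control: its intermediate proposition shows that if all induced cycles of $X_{L}$ have length less than $l$, then only minors on sets of cardinality less than $l$ need to be tested. That refinement is invisible to the black-box reduction, which consumes all principal minors of the set $S$ regardless of its size, and it is precisely what later powers Corollary \ref{nv}, Proposition \ref{radius}, and the compactness result Corollary \ref{comp}.
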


More generally, for non-vanishing functions $f,g:X\to\C$ we will say that the pair $\left(f,g\right)$ \emph{rescales} $L$ to $M$ if $M\left(x,y\right)=f\left(x\right)g\left(y\right) L\left(x,y\right)$, for any $x,y\in X$. Reciprocal rescaling corresponds to the case when $g=\frac{1}{f}$, but we can choose other relations between $f$ and $g$. In particular, if $g=f$ (or $g=\overline{f}$) we will say that $f$ \emph{symmetrically (or Hermiteanly)} rescales $L$ to $M$. Hermitean rescalings of \emph{positive semi-definite kernels} play a role in studying Multiplication Operators on Reproducing Kernel Hilbert Spaces (see e.g. \cite{arsw}).

One can ask if there is a way to ascertain if two bi-functions $L$ and $M$ are rescalings without referring to $f$ and $g$. It is easy to see that if $L$ and $M$ are rescalings, then $$M\left(x,y\right)M\left(y,z\right)L\left(x,z\right)L\left(y,y\right)=L\left(x,y\right)L\left(y,z\right)M\left(x,z\right)M\left(y,y\right),$$ for every $x,y,z$ in $X$. It is also easy to see that the converse holds if there is $y\in X$ such that $L\left(\cdot,y\right)$ and $L\left(y,\cdot\right)$ do not vanish. However, in general the converse does not hold.

Various questions related to rescaling of bi-functions, their graphs and minors (including the proof of Theorem \ref{simil} and its variants) are considered in sections \ref{bf} and \ref{minors}.

Since the underlying set $X$ is no longer confined to the finite world, one can add structure on $X$ into consideration. In this article we deal with two examples of such structures. In Section \ref{bf} we assume that $X$ is a topological space and consider bi-functions that satisfy some continuity conditions. We study how the topological properties of $X$ impact the graph theoretical properties of the graphs of such bi-functions, and under some additional restrictions we show that the equality above is a sufficient condition for $L$ and $M$ to be rescalings (see Theorem \ref{qr}, and also Example \ref{exa} that demonstrates that the conditions of this theorem are essential).

In Section \ref{resq} we consider the case when $X$ is a domain in $\C^{n}$ and study holomorphic and sesqui-holomorphic bi-functions on $X$. Since such bi-functions are determined by their values on relatively small sets, the corresponding rescaling criteria are much more succinct (see Proposition \ref{rigs} and Theorem \ref{rrig}).\medskip

In Section \ref{geoin} we focus on the geometric interpretation of Theorem \ref{simila} in the case when we restrict to the real positive definite matrices. These matrices are the Gram matrices of certain collections of vectors in $\R^{n}$ and the determinant of the Gram matrix is the square of the volume of the parallelepiped spanned by that collection. Hence, a direct consequence of Theorem \ref{simila} is the following fact.

\begin{corollary}\label{para}
Two parallelepipeds with equal volumes of the corresponding faces are isometric.
\end{corollary}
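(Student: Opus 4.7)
The plan is to pass from the geometric hypothesis to the algebraic hypothesis of Theorem \ref{simila} via Gram matrices, apply that theorem, and then interpret the resulting sign ambiguity geometrically. Let the two parallelepipeds be spanned by $v_{1},\ldots,v_{n}$ and $u_{1},\ldots,u_{n}$, respectively. For any $S\subseteq\left\{1,\ldots,n\right\}$, the face of the first parallelepiped spanned by $\left(v_{i}\right)_{i\in S}$ has $|S|$-dimensional volume equal to $\sqrt{\det\left[\langle v_{i},v_{j}\rangle\right]_{i,j\in S}}$, the square root of the principal minor of the Gram matrix $G_{v}=\left[\langle v_{i},v_{j}\rangle\right]_{i,j=1}^{n}$ indexed by $S$; the analogous statement holds for $G_{u}$. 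Hence the assumption that corresponding faces have equal volumes is exactly the assumption that $G_{v}$ and $G_{u}$ have equal principal minors of all orders.

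Both Gram matrices are real symmetric, so Theorem \ref{simila} yields a diagonal matrix $D=\diag\left(\epsilon_{1},\ldots,\epsilon_{n}\right)$ with each $\epsilon_{i}\in\left\{-1,1\right\}$ satisfying $G_{v}=D G_{u} D^{-1}=D G_{u} D$, the second equality holding because $D^{2}=I$. Entry-wise this reads $\langle v_{i},v_{j}\rangle=\langle\epsilon_{i}u_{i},\epsilon_{j}u_{j}\rangle$ for all $i,j$, so the tuples $\left(v_{i}\right)$ and $\left(\epsilon_{i}u_{i}\right)$ have identical Gram matrices. By the standard correspondence between Gram matrices and Euclidean configurations up to orthogonal transformation, there is then a linear isometry $T:\spa\left(u_{1},\ldots,u_{n}\right)\to\spa\left(v_{1},\ldots,v_{n}\right)$ with $T\left(\epsilon_{i}u_{i}\right)=v_{i}$ for each $i$.

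It remains to dispose of the signs. Setting $S=\left\{i:\epsilon_{i}=-1\right\}$, the parallelepiped spanned by $\left(\epsilon_{i}u_{i}\right)$ is just the translate by $-\sum_{i\in S}u_{i}$ of the parallelepiped spanned by $\left(u_{i}\right)$, because flipping the sign of a generating edge replaces it by the parallel opposite edge. Composing this translation with $T$ yields an isometry carrying one parallelepiped onto the other. The only real subtlety in the argument is this last geometric step: Theorem \ref{simila} delivers a relation between the spanning tuples of vectors rather than directly between the parallelepipeds, and one must observe that sign flips of generators are harmless up to translation. Everything else is a routine dictionary between Euclidean geometry and linear algebra.
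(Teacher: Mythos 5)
Your proof is correct and takes essentially the same route as the paper's own derivation: the paper likewise identifies face volumes with square roots of principal minors of the Gram matrices, invokes Theorem \ref{simila} (formalized for the infinite setting via Theorems \ref{similar} and \ref{pdsimilar}) to get the $\pm1$ diagonal conjugation, and disposes of the signs by the observation that $P\left(-A\cup B\backslash A\right)=P\left(B\right)-\sum A$, so flipping generators only translates the parallelepiped. (The paper also gives a second, independent, mostly geometric proof in Section \ref{geoin} via chains and induction, but your argument matches the primary one.)
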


We give a mostly geometric proof of this result independent of Theorem \ref{simila}. Joining the geometrical and the topological approaches allows us to characterize restrictions of isometries of Hilbert spaces on weakly connected sets as the maps that preserve the volumes of parallelepipeds spanned by finite collections of vectors (see Theorem \ref{iso}).\medskip

\textbf{Some notations and conventions. }Let $\Cp=\C\backslash\left\{0\right\}$, $\Rp=\R\backslash\left\{0\right\}$ and let $\T=\left\{\lambda\in\C,\left|\lambda\right|=1\right\}$. We will also denote the imaginary unit number by $\ii$ in order to reserve the letter $i$ for indexation (typically by integers). If $X$ is a set, we will view the Kronecker's $\delta$ as a function defined on $X\times X$, and for a fixed $x\in X$ $\delta_{x}$ will be viewed as a function on $X$. Finally, $Fin\left(X\right)$ will be the set of all finite subsets of $X$. If $\mu:Fin\left(X\right)\to \C$, we will adopt the following abuse of notations: for $x_{1},...,x_{n}\in X$, we will denote $\mu\left(x_{1},...,x_{n}\right)=\mu\left(\left\{x_{1},...,x_{n}\right\}\right)$, if $x_{1},...,x_{n}$ are all distinct, and $\mu\left(x_{1},...,x_{n}\right)=0$ otherwise.

\section{Bi-functions}\label{bf}

The main focus of this article is on the (abstract) functions of two variables. Let $X$ be a set. We will call functions defined on $X\times X$ \emph{bi-functions} and usually use capital letters to denote them, in order to distinguish from the ``usual'' function, i.e. scalar-valued functions defined on $X$. Note that a bi-function on a finite set $X=\left\{1,...,n\right\}$ is a $n\times n$ matrix. Consider the simplest examples of bi-functions on a general set: if $f,g:X\to\C$, define $f\otimes g:X\times X\to \C$ by $\left[f\otimes g\right]\left(x,y\right)=f\left(x\right)g\left(y\right)$ and $\diag f:X\times X\to \C$ by $\diag f\left(x,y\right)=\delta_{x,y}f\left(x\right)$. Note that $f_{1}\otimes g_{1}\cdot f_{2}\otimes g_{2}=f_{1}f_{2}\otimes g_{1}g_{2}$ and $\diag f_{1}\diag f_{2}=\diag \left(f_{1}f_{2}\right)$. Consider also a transition from a bi-function to a function: if $L:X\times X\to \C$, define the \emph{diagonal function} $\widehat{L}:X\to \C$ by $\widehat{L}\left(x\right)=L\left(x,x\right)$, for $x\in X$. We will say that $L$ is \emph{non-degenerate} if $\widehat{L}$ does not vanish. Although purely technical, this condition is very useful in our considerations. See Remark \ref{ma} for a justification of the term ``non-degenerate''.

For a bi-function $L$ on $X$ define bi-functions $L'$ and $L^{*}$ by $L'\left(x,y\right)=L\left(y,x\right)$, for $x,y\in X$, and $L^{*}=\overline{L'}$. We will say that $L$ is \emph{symmetric (Hermitean)} if $L=L'$ ($L=L^{*}$). Clearly, $f\otimes f$ and $\diag f$ are symmetric and $f\otimes \overline{f}$ is Hermitean, for any $f:X\to\C$. Note that if $L$ is Hermitean, then $\widehat{L}$ is real-valued.\medskip

\textbf{Rescalings. } We will say that a pair $\left(f,g\right)$ of non-vanishing functions on $X$ \emph{rescales} a bi-function $L$ on $X$ to a bi-function $M$, if $M=f\otimes g L$. Note that in this case $L\left(x,y\right)=0\Leftrightarrow M\left(x,y\right)=0$, for $x,y\in X$, and $fg=\frac{\widehat{M}}{\widehat{L}}$, outside of $\left\{x\in X\left|\widehat{L}\left(x\right)=0\right.\right\}$. We will say that bi-functions $L$ and $M$ on $X$ are \emph{rescalings}, if there are functions $f,g:X\to\Cp$ such that that $\left(f,g\right)$ rescales $L$ to $M$.

It is easy to see that if $\left(f,g\right)$ rescales $L$ to $M$, then $\left(g,f\right)$ rescales $L'$ to $M'$, $\left(\overline{f},\overline{g}\right)$ rescales $\overline{L}$ to $\overline{M}$, $\left(\overline{g},\overline{f}\right)$ rescales $L^{*}$ to $M^{*}$, and $\left(\frac{1}{f},\frac{1}{g}\right)$ rescales $M$ to $L$. Also, $\left(\alpha f,\beta g\right)$ rescales $L$ to $\alpha\beta M$, for any $\alpha,\beta \in \Cp$, and in particular $\left(\lambda f,\lambda^{-1}g\right)$ rescales $L$ to $M$ for any $\lambda\in\Cp$. Hence, $f$ and $g$ are not uniquely determined by $L$ and $M$.

If $\left(f_{1},g_{1}\right)$ rescales $K$ to $L$ and $\left(f_{2},g_{2}\right)$ rescales $L$ to $M$, then $\left(f_{1}f_{2},g_{1}g_{2}\right)$ rescales $K$ to $M$. Hence, the relation of being rescalings is an equivalence relation. If $\left(f_{i},g_{i}\right)$ rescales $L_{i}$ to $M_{i}$, then $\left(\prod \limits_{i}f_{i},\prod \limits_{i}g_{i}\right)$ rescales $\prod \limits_{i}L_{i}$ to $\prod \limits_{i}M_{i}$.

Let $Y,Z$ be subsets of $X$ such that $Y\subset Z\subset X$ and let $f,g:Z\to\Cp$. We will say that $\left(f,g\right)$ rescales $L$ to $M$ on $Y$ if $f$ and $g$ rescale $\left. L\right|_{Y\times Y}$ to $\left. M\right|_{Y\times Y}$; in this case we will say that $L$ and $M$ are rescalings on $Y$. Being rescalings on $Y$ is an equivalence relation, which is weaker than being rescalings; if $L$ and $M$ are rescalings on $Z\supset Y$ then they are rescalings on $Y$. Let us now introduce equivalence relations between bi-functions, which are finer that being rescalings.\medskip

We will say that $f$ \emph{symmetrically} rescales $L$ to $M$ if $\left(f,f\right)$ rescales $L$ to $M$. In this case we will say that $L$ and $M$ are \emph{symmetric} rescalings. We will say that $f$ \emph{Hermiteanly} rescales $L$ to $M$ if $\left(f,\overline{f}\right)$ rescales $L$ to $M$. In this case we will say that $L$ and $M$ are \emph{Hermitean rescalings}. If $f$ symmetrically (or Hermiteanly) rescales $L$ to $M$ and $L$ is non-degenerate, then $f^{2}=\frac{\widehat{M}}{\widehat{L}}\ge0$ (or $\left|f\right|^{2}=\frac{\widehat{M}}{\widehat{L}}>0$), outside of $\left\{x\in X\left|\widehat{L}\left(x\right)=0\right.\right\}$. We will say that $f$ \emph{reciprocally} rescales $L$ to $M$ if $\left(f,\frac{1}{f}\right)$ rescales $L$ to $M$. In this case we will say that $L$ and $M$ are \emph{reciprocal rescalings}. Note that this concept is the analogue of the diagonal similarity of matrices. If $L$ and $M$ are reciprocal rescalings, then $\widehat{L}=\widehat{M}$; conversely if $\left(f,g\right)$ rescales $L$ to $M$, $L$ is non-degenerate and $\widehat{L}=\widehat{M}$, then $g=\frac{1}{f}$. For any $h:X\to\C$ any reciprocal rescaling of $\diag h$ is $\diag h$ itself. Hence, if $f$ reciprocally rescales $L$ to $M$, then $f$ also reciprocally rescales $L+\diag h$ to $M+\diag h$.

For a subgroup $\Gamma$ of $\Cp$ we will say that $L$ and $M$ are $\Gamma$-rescalings if there are $f,g:X\to \Gamma$ such that $\left(f,g\right)$ rescales $L$ to $M$. If in this case $f=g$, we will say that $L$ and $M$ are symmetric $\Gamma$-rescalings. In particular, if $\Gamma=\left\{-1,1\right\}$, we will call $L$ and $M$ (symmetric) $\pm 1$-rescalings. It is easy to see that if $L$ and $M$ are (symmetric) $\pm 1$-rescalings, then $L^{2}=M^{2}$ (and $\widehat{L}=\widehat{M}$). Conversely, if $L$ and $M$ are non-degenerate symmetric rescalings with $\widehat{L}=\widehat{M}$, then they are symmetric $\pm 1$-rescalings. Also, if $L$ and $M$ are non-degenerate symmetric rescalings with $\frac{\widehat{M}}{\widehat{L}}>0$, then they are symmetric $\Rp$-rescalings. In particular, this happens, when $L$ and $M$ are Hermitean and symmetric rescalings simultaneously. For more similar properties see Proposition \ref{cresc}.\medskip

\textbf{The graph of a bi-function. }The main idea that we borrow from \cite{es} and \cite{rkt} is the introduction of the following object. For a bi-function $L$ on $X$ let $X_{L}$ be the (undirected) graph with the set of vertices equal to $X$ and $\left(x,y\right)\in X\times X$ being an edge if either $L\left(x,y\right)\ne 0$ or $L\left(y,x\right)\ne 0$; for $Y\subset X$ define $Y_{L}=Y_{\left.L\right|_{Y\times Y}}$. The (graph-theoretical) components of $X_{L}$ are the equivalence classes of the minimal equivalence relation on $X$ that contains pairs $\left(x,y\right)\in X\times X$ such that $L\left(x,y\right)\ne 0$. Also, the components of $X_{L}$ can be viewed as ``degrees of freedom'' of rescaling of $L$, as the following proposition shows.

\begin{proposition}\label{components}
Bi-functions $L$ and $M$ on a set $X$ are rescalings if and only if $X_{L}=X_{M}$, and $L$ and $M$ are rescalings on every component of this graph. In particular, if $f,g:X\to\Cp$, then $\left(f,g\right)$ rescales $L$ to $M$ if and only if $\left(f\left|_{Y}\right.,g\left|_{Y}\right.\right)$ rescales $L$ to $M$ on every component $Y$ of $X_{L}$.
\end{proposition}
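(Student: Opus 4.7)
My plan is to reduce the first equivalence to the ``in particular'' clause, since the real content lies in showing that a family of componentwise rescalings can be glued to a global one.

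First I would dispatch the easy forward implication: if $(f,g)$ rescales $L$ to $M$ with $f,g$ non-vanishing, the identity $M=f\otimes g\cdot L$ forces $L(x,y)=0\Leftrightarrow M(x,y)=0$, so $X_L$ and $X_M$ have the same edge set (and the same vertex set $X$) and hence coincide. Restricting $f,g$ to any component $Y$ then obviously rescales $L$ to $M$ on $Y$.

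For the reverse direction, I would start from the hypothesis $X_L=X_M$ together with componentwise rescalings, pick for each component $Y$ of $X_L$ some witnesses $f_Y,g_Y:Y\to\Cp$, and glue them into $f,g:X\to\Cp$ using that the components partition $X$. The key observation --- and the only place the graph definition really enters --- is that whenever $x$ and $y$ lie in distinct components of $X_L$, by definition of the graph both $L(x,y)$ and $L(y,x)$ vanish, and by $X_L=X_M$ the same holds for $M$. Hence the rescaling identity $M(x,y)=f(x)g(y)L(x,y)$ holds trivially across components (both sides are zero), while within a single component it holds by the choice of $f_Y,g_Y$. The ``in particular'' clause follows from the same argument applied to the given $(f,g)$ in place of the chosen $(f_Y,g_Y)$.

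I do not anticipate a real obstacle: the content of the proposition is essentially the observation that the nonzero values of $L$ (and $M$) never straddle two distinct components of $X_L$, so the global rescaling equation decouples perfectly across the partition into components.
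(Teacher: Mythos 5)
Your proposal is correct and follows essentially the same route as the paper: necessity via the equivalence $L(x,y)=0\Leftrightarrow M(x,y)=0$, and sufficiency by gluing the componentwise witnesses over the partition of $X$ into components, noting that for $x,y$ in distinct components both $L(x,y)$ and $M(x,y)$ vanish (the latter using $X_L=X_M$), so the identity $M(x,y)=f(x)g(y)L(x,y)$ holds trivially there. The paper's proof is exactly this gluing argument, stated so that it simultaneously yields the ``in particular'' clause, just as you observe.
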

\begin{proof}
Necessity is obvious; let us prove sufficiency. Since $X_{L}=X_{M}$ we will view $X$ endowed with a fixed graph structure. Let $X=\bigsqcup\limits_{j\in I} X_{j}$, where $X_{j}$ is a graph component of $X$, for each $j\in I$. Let $f_{j},g_{j}:X_{j}\to\Cp$, and let $f,g:X\to\Cp$ be defined by $f\left(x\right)=f_{j}\left(x\right)$ and $g\left(x\right)=g_{j}\left(x\right)$, where $x\in X_{j}$.

We only need to show that if $M\left|_{X_{j}\times X_{j}}\right.=f_{j}\otimes g_{j}L\left|_{X_{j}\times X_{j}}\right.$, for every $j\in I$, then $M=f\otimes g L$. Let $x,y\in X$. If there is $j\in I$ such that $x,y\in X_{j}$, we have $f\left(x\right)g\left(y\right)L\left(x,y\right)=f_{j}\left(x\right)g_{j}\left(y\right)L\left(x,y\right)=M\left(x,y\right)$. Otherwise, $f\left(x\right)g\left(y\right)L\left(x,y\right)=0=M\left(x,y\right)$.
\end{proof}

\begin{corollary}\label{components0}Let $L$ be a bi-function on a set $X$.
\item[(i)]  $f:X\to\Cp$ reciprocally rescales $L$ to itself if and only if $f$ is constant on every component of $X_{L}$.
\item[(ii)] If $L$ is non-degenerate then for $f_{1},g_{1},f_{2},g_{2}:X\to\Cp$ we have that $f_{1}\otimes g_{1} L= f_{2}\otimes g_{2} L$ if and only if $\frac{f_{1}}{f_{2}}=\frac{g_{2}}{g_{1}}$ is constant on every component of $X_{L}$.
\end{corollary}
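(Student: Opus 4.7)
The plan is to deduce both statements from Proposition \ref{components} together with the elementary observation that a function constant along every edge of a connected graph is constant on the whole graph.

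For part (i), the statement that $f$ reciprocally rescales $L$ to itself is the same as saying that $(f,1/f)$ rescales $L$ to $L$; Proposition \ref{components} reduces this to the analogous question on each component $Y$ of $X_L$ separately. Within a fixed component, the rescaling identity $\frac{f(x)}{f(y)} L(x,y) = L(x,y)$ forces $f(x) = f(y)$ whenever $L(x,y) \neq 0$. Applying the same identity at $(y,x)$ shows $f(x) = f(y)$ whenever $L(y,x) \neq 0$ as well, so constancy of $f$ holds across every edge of $Y_L$. Since any two vertices of $Y$ are joined by a finite edge-path, $f$ is constant on $Y$. The converse is immediate: if $f$ is constant on each component, then $\frac{f(x)}{f(y)}L(x,y) = L(x,y)$ holds trivially when $x,y$ lie in the same component, and both sides vanish when they do not.

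For part (ii), I would first specialize the hypothesis $f_1 \otimes g_1 L = f_2 \otimes g_2 L$ to the diagonal, obtaining $f_1(x) g_1(x) \widehat{L}(x) = f_2(x) g_2(x) \widehat{L}(x)$ for every $x \in X$. Non-degeneracy of $L$ means $\widehat{L}$ does not vanish, so $\frac{f_1}{f_2} = \frac{g_2}{g_1}$ pointwise on $X$. Writing $h := \frac{f_1}{f_2} = \frac{g_2}{g_1}$, the original identity is equivalent to $\frac{h(x)}{h(y)} L(x,y) = L(x,y)$ for all $x,y \in X$, which is precisely the statement that $h$ reciprocally rescales $L$ to itself. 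Part (i) then yields constancy of $h$ on each component of $X_L$, and the converse direction merely reverses this chain of equivalences (with the off-component case handled by $L(x,y) = 0$ as in (i)).

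The only nontrivial step is the use of non-degeneracy in (ii) to promote the diagonal equality to the pointwise identification of $f_1/f_2$ with $g_2/g_1$; everything else is a mechanical application of Proposition \ref{components} plus the graph-connectedness argument.
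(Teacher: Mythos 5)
Your proposal is correct and follows essentially the same route as the paper: part (i) via the observation that the rescaling identity forces $f(x)=f(y)$ across every edge of $X_{L}$ (the paper phrases this as an equivalence-relation argument, you as a finite edge-path argument, which is the same thing), and part (ii) by evaluating on the diagonal, using non-degeneracy to identify $\frac{f_{1}}{f_{2}}=\frac{g_{2}}{g_{1}}$, and reducing to part (i) applied to $h=\frac{f_{1}}{f_{2}}$. No gaps.
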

\begin{proof}
(i): Sufficiency follows from the preceding proposition. Let us prove necessity. Observe that if $L\left(x,y\right)\ne 0$, then from $f\left(x\right)\frac{1}{f\left(y\right)}L\left(x,y\right)=L\left(x,y\right)$ we get $f\left(x\right)=f\left(y\right)$. Hence, $f\left(x\right)=f\left(y\right)$ is an equivalence relation on $X$ that contains all pairs $\left(x,y\right)\in X\times X$ such that $L\left(x,y\right)\ne 0$. Thus, $f$ is constant on every component of $X$.\medskip

Part (ii) follows from part (i) applied to $h=\frac{f_{1}}{f_{2}}$, since if $L$ is non-degenerate, we have that $\frac{g_{1}}{g_{2}}=\frac{1}{h}$.
\end{proof}

\begin{remark}
The condition that $L$ is non-degenerate is essential in part (ii). Indeed, for $X=\left\{1,2,3,4\right\}$ consider $L:X\times X\to\C$ and $f:X\to\C$ defined by $L\left(i,j\right)=1-\left(-1\right)^{i+j}$ and $f\left(i\right)=\exp\left(\left(-1\right)^{i}\right)$, $i,j\in X$. Then $X_{L}$ is connected, but a non-constant function $f$ symmetrically rescales $L$ to itself.
\qed\end{remark}

\begin{corollary}\label{condis} Let $L$ be a bi-function on a set $X$ such that $X_{L}$ is connected. Then:
\item[(i)] $f:X\to\Cp$ reciprocally rescales $L$ to itself if and only if $f$ is constant.
\item[(ii)] Assume that $L$ is non-degenerate, let $M$ be a bi-function on $X$, and let $x\in X$. If $L$ and $M$ are rescalings, then there are unique $f,g:X\to\Cp$ such that $\left(f,g\right)$ rescales $L$ to $M$ with $f\left(x\right)=1$ (or $g\left(x\right)=1$). If $L$ and $M$ are reciprocal rescalings, then there is a unique $f:X\to\Cp$ that reciprocally rescales $L$ to $M$ with $f\left(x\right)=1$. If $L$ and $M$ are Hermitean rescalings, then there is a unique $f:X\to\Cp$ that Hermiteanly rescales $L$ to $M$ with $f\left(x\right)>0$. If $L$ and $M$ are symmetric rescalings and $f,g:X\to\Cp$ both symmetrically rescale $L$ to $M$, then either $f=g$ or $f=-g$.
\end{corollary}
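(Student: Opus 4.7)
The plan is to deduce both parts of the corollary from Corollary \ref{components0} by using the fact that in the present setting $X_{L}$ has a single component, so ``constant on every component'' simply collapses to ``constant on $X$.'' With this observation, part (i) is an immediate restatement of Corollary \ref{components0}(i).

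For part (ii), I would set up a uniform template that handles all four uniqueness claims at once. If $(f_{1},g_{1})$ and $(f_{2},g_{2})$ both rescale $L$ to $M$, then $f_{1}\otimes g_{1} L = M = f_{2}\otimes g_{2} L$, and non-degeneracy of $L$ together with Corollary \ref{components0}(ii) forces $\frac{f_{1}}{f_{2}}=\frac{g_{2}}{g_{1}}$ to be constant on each component of $X_{L}$; connectedness then promotes this to a single scalar $c\in\Cp$ with $f_{1}=cf_{2}$ and $g_{1}=c^{-1}g_{2}$. Existence of a rescaling with a prescribed normalization becomes a rescaling of a rescaling: given any pair $(f_{0},g_{0})$ rescaling $L$ to $M$, the pair $\left(f_{0}/f_{0}(x),\,f_{0}(x)g_{0}\right)$ rescales $L$ to $M$ and has first coordinate equal to $1$ at $x$, and the normalization $f(x)=1$ then forces $c=1$ in the uniqueness template; the $g(x)=1$ case is symmetric.

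The three refined cases arise by intersecting the two-parameter family $(cf_{2},c^{-1}g_{2})$ with a symmetry constraint. In the reciprocal case ($g=1/f$), the identity $g_{1}=c^{-1}g_{2}$ is automatic from $f_{1}=cf_{2}$, and $f(x)=1$ forces $c=1$. In the Hermitean case ($g=\overline{f}$), writing $\overline{f_{1}}=c^{-1}\overline{f_{2}}$ and comparing with $f_{1}=cf_{2}$ yields $c=\overline{c}^{-1}$, so $c\in\T$; to achieve $f(x)>0$, note that for any $\lambda\in\T$ the pair $(\lambda f,\overline{\lambda f})$ still rescales $L$ to $M$ since $\lambda\overline{\lambda}=1$, so choosing $\lambda=\overline{f(x)}/\left|f(x)\right|$ produces a Hermitean rescaling taking a positive value at $x$; the residual $\T$-ambiguity is then killed by that positivity. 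In the symmetric case ($g=f$), the two identities $f_{1}=cf_{2}$ and $f_{1}=c^{-1}f_{2}$ give $c^{2}=1$, hence $f=\pm g$. The only mildly nontrivial step is the $\T$-rotation used in the Hermitean case, which simultaneously yields existence with $f(x)>0$ and identifies the correct residual symmetry; beyond that I expect no real obstacles, since everything is bookkeeping on top of Corollary \ref{components0}.
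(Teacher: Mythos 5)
Your proposal is correct and follows exactly the route the paper intends: Corollary \ref{condis} is stated without proof precisely because it is the specialization of Corollary \ref{components0} to a connected graph $X_{L}$, with the normalization, reciprocal, Hermitean, and symmetric cases all falling out of the constant $c$ in $f_{1}=cf_{2}$, $g_{1}=c^{-1}g_{2}$ as you describe. Your $\T$-rotation argument for existence in the Hermitean case and the $c^{2}=1$ computation in the symmetric case are the standard bookkeeping the paper leaves to the reader.
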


The preceding results allow us to translate properties of rescalings $L$ and $M$ on properties of $f,g$ such that $\left(f,g\right)$ rescales $L$ to $M$.

\begin{proposition}\label{cresc} Let $L,M:X\times X\to\C$ be rescalings. Then:
\item[(i)] Assume that $L$ and $M$ are non-degenerate. Let $\Gamma$ be a subgroup of $\Cp$ such that for every $x,y\in X$ there is $\gamma\in\Gamma$ such that $M\left(x,y\right)=\gamma L\left(x,y\right)$. Then $L$ and $M$ are $\Gamma$-rescalings.
\item[(ii)] If $L$ and $M$ are symmetric, then they are symmetric rescalings. If moreover, they are non-degenerate $\Gamma$-rescalings, for a subgroup $\Gamma$ of $\Cp$, and $X_{L}$ is connected, then $L$ is a symmetric $\Gamma$-rescaling of  $\gamma M$, for some $\gamma\in\Gamma$.
\item[(iii)] If $L$ and $M$ are Hermitean non-degenerate and $X_{L}$ is connected, then $L$ is a Hermitean rescaling of either $M$ or $-M$.
\end{proposition}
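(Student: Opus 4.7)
Throughout, fix $f,g:X\to\Cp$ with $M=f\otimes g\,L$, which exist by the standing rescaling hypothesis. Each part proceeds by sharpening this factorization using the extra structure given; for (ii) and (iii) the key tool is a second factorization of $M$ produced by the appropriate involution (transposition or Hermitean conjugation), which, combined with Corollary \ref{components0}(ii), collapses the ambiguity to a single constant and leaves one with the task of identifying that constant in the prescribed subset of $\Cp$.

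For (i), Proposition \ref{components} reduces the claim to a single component $Y$ of $X_{L}$. If $Y=\{x\}$ is isolated, non-degeneracy gives $L(x,x)\ne 0$, and the pair $f(x):=M(x,x)/L(x,x)\in\Gamma$, $g(x):=1\in\Gamma$ does the job. Otherwise, I would fix $x_{0}\in Y$ and use Corollary \ref{condis}(ii) to normalize to the unique rescaling pair with $f(x_{0})=1$, and then induct on distance in $Y_{L}$: for $y=x_{n}$ reached along a path $x_{0},\dots,x_{n}=y$, one of $L(x_{k-1},x_{k})$, $L(x_{k},x_{k-1})$ is nonzero and the rescaling equation expresses $g(x_{k})$ (or $f(x_{k})$) as $f(x_{k-1})^{-1}$ (or $g(x_{k-1})^{-1}$) times $M/L$ evaluated at that edge, hence as a product of $\Gamma$-values by the inductive hypothesis and the assumption on $\Gamma$. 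The remaining unknown at $x_{k}$ is then pinned down in $\Gamma$ through the diagonal equation $M(x_{k},x_{k})=f(x_{k})g(x_{k})L(x_{k},x_{k})$, using non-degeneracy and $M(x_{k},x_{k})/L(x_{k},x_{k})\in\Gamma$.

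For (ii), transposing $M=f\otimes g\,L$ and using $L=L'$, $M=M'$ yields $M=g\otimes f\,L$, so $(f(x)g(y)-f(y)g(x))L(x,y)=0$ for all $x,y$. On a component $Y$ of $X_{L}$ containing an edge, the symmetry of $L$ identifies its edges with the pairs where $L\ne 0$, so $f/g$ is constant on $Y$; writing $f=cg$ there gives $M=c(g\otimes g)L$ on $Y$, and any square root of $c$ furnishes a symmetric rescaling on $Y$. Isolated components are settled by hand (a square root of $M(x,x)/L(x,x)$ when $L(x,x)\ne 0$, else anything nonzero), and Proposition \ref{components} glues these local symmetric rescalings into a global one. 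For the strengthening, connectedness of $X_{L}$ promotes the local constants $c_{Y}$ to a single $c=f/g$, which lies in $\Gamma$ since $f$ and $g$ do; then $\gamma M=(g\otimes g)L$ with $\gamma=c^{-1}\in\Gamma$, equivalently $L=(1/g)\otimes(1/g)(\gamma M)$ with $1/g:X\to\Gamma$.

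For (iii), the Hermitean involution applied to $M=f\otimes g\,L$ yields the second factorization $M=\overline{g}\otimes\overline{f}\,L$. By Corollary \ref{components0}(ii), non-degeneracy of $L$, and connectedness of $X_{L}$, the functions $f/\overline{g}$ and $\overline{f}/g$ coincide with a single constant $c\in\Cp$; equating the two expressions for $c$ forces $\overline{c}=c$, so $c\in\R$. Its sign equals that of the real number $M(x,x)/L(x,x)=c|g(x)|^{2}$. If $c>0$, then $h:=\sqrt{c}\,\overline{g}$ satisfies $h\otimes\overline{h}\,L=M$; if $c<0$, then $h:=\sqrt{-c}\,\overline{g}$ satisfies $h\otimes\overline{h}\,L=-M$. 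The hardest step, common to all three parts, is the extraction of a constant in the prescribed subset of $\Cp$ from an a priori complex one, handled respectively by the path induction in (i), by taking a square root of $f/g$ in (ii), and by the realness of the Hermitean diagonal in (iii).
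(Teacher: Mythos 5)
Your proposal is correct and follows essentially the same route as the paper: in each part you produce a second factorization of $M$ (via transposition or Hermitean conjugation), use Corollary \ref{components0}/\ref{condis} to collapse the ambiguity to a constant on each component, extract a square root, and in (i) propagate $\Gamma$-membership through the connected graph using the group structure and the diagonal equation. The only differences are cosmetic: the paper phrases your path induction in (i) as an equivalence-relation argument ($\frac{f(x)}{f(y)}\in\Gamma$ along edges), and it replaces your explicit component-by-component gluing with a single ``WLOG $X_{L}=X_{M}$ is connected'' reduction via Proposition \ref{components} at the start.
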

\begin{proof}
In the light of part (i) of Proposition \ref{components}, without loss of generality we may assume that $X_{L}=X_{M}$ is connected. Let $f,g:X\to\Cp$ be such that $\left(f,g\right)$ rescales $L$ to $M$.

(i): Fix $z\in X$. From part (ii) of the preceding corollary, without loss of generality we may assume $f\left(z\right)=1$. Since $L$ and $M$ are non-degenerate, it follows that $f\left(x\right)g\left(x\right)\in\Gamma$, for every $x\in X$. If $x,y\in X$ are such that $L\left(x,y\right)\ne 0$, then $f\left(x\right)g\left(y\right)\in\Gamma$, and so $\frac{f\left(x\right)}{f\left(y\right)}=\frac{f\left(x\right)g\left(y\right)}{f\left(y\right)g\left(y\right)}\in\Gamma$. Since $\Gamma$ is a group, $\frac{f\left(x\right)}{f\left(y\right)}\in\Gamma$ is an equivalence relation. As $X_{L}$ is connected it follows that $\frac{f\left(x\right)}{f\left(y\right)}\in\Gamma$ for every $x,y\in X$. In particular, $f\left(x\right)=\frac{f\left(x\right)}{f\left(z\right)}\in\Gamma$. Finally, $g=\frac{fg}{f}$ is also $\Gamma$-valued.\medskip

(ii): Since $L$ and $M$ are symmetric, $\left(g,f\right)$ also rescales $L$ to $M$. Therefore, $e=\frac{f}{g}$ reciprocally rescales $L$ to itself, and so from part (i) of the preceding corollary, $e$ is constant. Then $h=\frac{f}{\sqrt{e}}=\sqrt{e}g$ symmetrically rescales $L$ to $M$, and so $L$ and $M$ are symmetric rescalings. If in this case $f$ and $g$ were $\Gamma$-valued, $e\equiv\gamma\in\Gamma$, and so $f$ symmetrically rescales $L$ to $f\otimes f L=\gamma f\otimes g L=\gamma M$.\medskip

(iii): Since $L$ and $M$ are Hermitean and non-degenerate, $\left(\overline{g},\overline{f}\right)$ rescales $L$ to $M$, and $fg$ is real-valued. Then $h=\frac{f}{\overline{g}}=\frac{\overline{f}}{g}$ is also real-valued. Since $h$ and $\frac{1}{h}$ rescale $L$ to itself, $h$ is a real constant. If $h>0$, then $\frac{f}{\sqrt{h}}=\overline{\sqrt{h} g}$ Hermiteanly rescales $L$ to $M$. If $h<0$, then $-\frac{f}{\sqrt{-h}}=\overline{\sqrt{-h} g}$ Hermiteanly rescales $L$ to $-M$.
\end{proof}

\begin{remark}
Note that the proof of part (i) in fact gives a stronger result: if $\left(f,g\right)$ rescales $L$ to $M$, $Y$ is a component of $X_{L}$, and $x\in Y$, then $f\left(Y\right)\subset f\left(x\right)\Gamma$, and $g\left(Y\right)\subset f\left(x\right)^{-1}\Gamma$.
\qed\end{remark}

\begin{example}\label{ecr}
Examples of $\Gamma$ that are the most relevant to the topic of rescaling are $\Rp$, $\R_{+}$, $\T$, and $\pm 1$. In particular, if $L$ and $M$ are rescalings, and $\left|L\right|=\left|M\right|$, then they are $\T$-rescalings.

If $L$ and $M$ are non-degenerate rescalings with $\frac{\widehat{M}}{\widehat{L}}>0$ (e.g. $L$ and $M$ are Hermitean rescalings), and $L^{2}=M^{2}$, then $L$ and $M$ are symmetric $\pm 1$-rescalings. Indeed, from part (i) it follows that $L$ and $M$ are $\pm 1$ rescalings, and so there are $f,g:X\to \left\{-1,1\right\}$ such that $\left(f,g\right)$ rescales $L$ to $M$. But then $fg=\frac{\widehat{M}}{\widehat{L}}>0$, and so $f=g$.\medskip
\qed\end{example}

\textbf{A criterion for rescaling. }Note that the definition of the fact that bi-functions $L$ and $M$ on a set $X$ are rescalings is extrinsic, i.e. it involves objects other than $L$ and $M$. Hence, it is desirable to be able to decide if given $L$ and $M$ are rescalings by examining a certain criterion. In this subsection we will do some preparatory work towards such a criterion.

Let $\mathcal{Y}$ stand for the class of all subsets $Y$ of $X$, such that $Y_{L}=Y_{M}$ is connected, and $L$ and $M$ are rescalings on $Y$. We will need the following technical property of this family.

\begin{lemma}\label{zorn}
If $M$ and $L$ are non-degenerate, then  $\mathcal{Y}$ satisfies the conditions of Zorn's lemma, with respect to the partial order given by inclusion of sets.
\end{lemma}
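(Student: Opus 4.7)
The plan is to take an arbitrary chain $\mathcal{C}\subseteq\mathcal{Y}$ and show that $Z:=\bigcup_{Y\in\mathcal{C}}Y$ belongs to $\mathcal{Y}$, hence serves as an upper bound. An empty chain is handled by any singleton $\left\{x\right\}\subseteq X$, which trivially lies in $\mathcal{Y}$ by non-degeneracy. So assume $\mathcal{C}\neq\emptyset$.

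First I would verify the graph-theoretic conditions: $Z_{L}=Z_{M}$ and $Z_{L}$ is connected. Both reduce to pointwise checks that stay inside individual members of the chain. Given $x,y\in Z$, the chain property yields some $Y\in\mathcal{C}$ containing both, so the edge $\left(x,y\right)$ is present in $Z_{L}$ iff it is present in $Y_{L}=Y_{M}$ iff it is present in $Z_{M}$. Similarly, a path inside $Y_{L}\subseteq Z_{L}$ witnesses connectedness between $x$ and $y$ in $Z_{L}$.

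The main step is constructing $f,g:Z\to\Cp$ such that $\left(f,g\right)$ rescales $L$ to $M$ on $Z$. Fix $Y_{0}\in\mathcal{C}$ and a basepoint $x_{0}\in Y_{0}$, and pass to the cofinal sub-chain $\mathcal{C}':=\left\{Y\in\mathcal{C}:Y\supseteq Y_{0}\right\}$, which still satisfies $\bigcup\mathcal{C}'=Z$ (anything in $\mathcal{C}\setminus\mathcal{C}'$ lies inside $Y_{0}$). Each $Y\in\mathcal{C}'$ inherits non-degeneracy of $L$ and $M$ from $X$, and $Y_{L}$ is connected by hypothesis, so Corollary \ref{condis}(ii) supplies unique $f_{Y},g_{Y}:Y\to\Cp$ with $f_{Y}\left(x_{0}\right)=1$ rescaling $L$ to $M$ on $Y$. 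If $Y_{1}\subseteq Y_{2}$ in $\mathcal{C}'$, then $\left(f_{Y_{2}}\left|_{Y_{1}}\right.,g_{Y_{2}}\left|_{Y_{1}}\right.\right)$ is also a normalized rescaling of $L$ to $M$ on $Y_{1}$, and uniqueness forces $f_{Y_{2}}\left|_{Y_{1}}\right.=f_{Y_{1}}$ and $g_{Y_{2}}\left|_{Y_{1}}\right.=g_{Y_{1}}$. These compatible pieces glue to well-defined $f,g:Z\to\Cp$; for any $x,y\in Z$, choosing a single $Y\in\mathcal{C}'$ containing both gives $M\left(x,y\right)=f_{Y}\left(x\right)g_{Y}\left(y\right)L\left(x,y\right)=f\left(x\right)g\left(y\right)L\left(x,y\right)$.

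I expect the gluing to be the only real obstacle: without a canonical normalization, the local rescalings on overlapping members of the chain could differ by the component-constant factors described in Corollary \ref{components0}(ii). The whole argument therefore leans on the uniqueness clause of Corollary \ref{condis}(ii), which is precisely where both connectedness of $Y_{L}$ and non-degeneracy of $L$ enter—matching exactly the conditions packaged into membership in $\mathcal{Y}$ together with the hypothesis of the lemma.
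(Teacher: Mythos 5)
Your proof is correct and takes essentially the same approach as the paper's: pass to a cofinal subchain whose members all contain a fixed basepoint, use the uniqueness clause of Corollary \ref{condis}(ii) to force the normalized local rescalings to agree on nested members, and glue them over the union. The only differences are explicit checks the paper leaves implicit (the empty chain and the verification that $Z_{L}=Z_{M}$ is connected), which are harmless additions.
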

\begin{proof}
Let $I$ be a linearly directed family and let $\left\{Y_{i}\right\}_{i\in I}\subset\mathcal{Y}$ be increasing. Clearly, if $Y=\bigcup\limits_{i\in I}Y_{i}$, then $Y_{L}$ is connected. We will show that $Y\in\mathcal{Y}$.

Fix some $z\in Y$. Without loss of generality we may assume that $z\in Y_{i}$, for any $i\in I$ (otherwise restrict $I$ so that it is true; this transition does not affect $Y$). Since $L$ and $M$ are rescalings on $Y_{i}$ for every $i\in I$, there are functions $f_{i},g_{i}:Y_{i}\to\Cp$ such that $\left(f_{i},g_{i}\right)$ rescales $L$ to $M$ on $Y_{i}$ and such that $f_{i}\left(z\right)=1$.

Let $i,j\in I$ and assume that $i\prec j$. Then $Y_{i}\subset Y_{j}$, and so $\left(f_{j},g_{j}\right)$ rescales $L$ to $M$ on $Y_{i}$. Since $\left(Y_{i}\right)_{L}$ is connected, from part (ii) of Corollary \ref{condis} we have that $f_{j}\left|_{Y_{i}}\right.=f_{i}$, and $g_{j}\left|_{Y_{i}}\right.=g_{i}$. Thus, we can define functions $f,g:Y\to\Cp$, such that $f_{i}=f\left|_{Y_{i}}\right.$ and $g_{i}=g\left|_{Y_{i}}\right.$, for every $i\in I$. Let $x,y\in Y$. There is $i\in I$, such that $x,y\in Y_{i}$, and so $$M\left(x,y\right)=f_{i}\left(x\right)g_{i}\left(y\right)L\left(x,y\right)=f\left(x\right)g\left(y\right)L\left(x,y\right).$$ Since $x$ and $y$ were chosen arbitrarily, we can conclude that $Y\in\mathcal{Y}$.
\end{proof}

For bi-functions $L$ and $M$ on a set $X$ consider an equality

\begin{equation}\label{*}
M\left(x,y\right)M\left(y,z\right)L\left(x,z\right)L\left(y,y\right)=L\left(x,y\right)L\left(y,z\right)M\left(x,z\right)M\left(y,y\right),\tag*{($\ast$)}
\end{equation}

where $x,y,z\in X$. It is easy to see that if $L$ and $M$ are rescalings, then \ref{*} holds for any $x,y,z$. Conversely, in the event that there is $y\in X$ such that both $L\left(\cdot,y\right)$ and $L\left(y,\cdot\right)$ do not vanish, and \ref{*} holds for any $x,z\in X$, then $\left(f_{y},g_{y}\right)$ rescales $L$ to $M$, where $f_{y}=\frac{M\left(\cdot,y\right)L\left(y,y\right)}{L\left(\cdot,y\right)M\left(y,y\right)}$ and $g_{y}=\frac{M\left(y,\cdot\right)}{L\left(y,\cdot\right)}$.

\section{Minors of a bi-function}\label{minors}

Since a bi-function is a generalization of a square matrix, it is natural to introduce a concept related to determinants. Namely, for a bi-function $L$ on $X$ and $x_{1},...,x_{n}\in X$ denote $\det_{L}\left(x_{1},...,x_{n}\right)=\det\left[L\left(x_{i},x_{j}\right)\right]_{i,j=1}^{n}$. Any renumeration of $x_{1},...,x_{n}$ does not affect $\det_{L}\left(x_{1},...,x_{n}\right)$, since swapping $x_i$ with $x_j$ corresponds to swapping the $i$-th and $j$-th columns as well as $i$-th and $j$-th rows of the corresponding matrix, and so the determinant gets multiplied with $\left(-1\right)^{2}=1$. Analogously, if $x_{i}=x_{j}$ for some $i,j\in\overline{1,n}$, then $\det_{L}\left(x_{1},...,x_{n}\right)=0$. Hence, $\det_{L}$ may be viewed as a scalar function defined on the collection $Fin\left(X\right)$ of finite subsets of $X$. In particular, $\det_{L}\left(x\right)=\widehat{L}\left(x\right)$ and $\det_{L}\left(x,y\right)=\widehat{L}\left(x\right)\widehat{L}\left(y\right)-L\left(x,y\right)L\left(y,x\right)$, for $x,y\in X$. Also, note that $\det_{L}=\det_{L'}$ and $\det_{\overline{L}}=\det_{L^{*}}=\overline{\det_{L}}$.

Clearly, if $f:X\to\C$, then $\det_{\diag f}\left(x_{1},...,x_{n}\right)=\prod\limits_{i=1}^{n}f\left(x_{i}\right)$, for any distinct $x_{1},...,x_{n}\in X$. Also, it is easy to see that if $f,g:X\to\C$, then $\det_{f\otimes g}$ vanishes on subsets of $X$ of cardinality higher than $1$. Finally, if $L$ is symmetric (Hermitean), then $\widehat{L}\left(x\right)\widehat{L}\left(y\right)-\det_{L}\left(x,y\right)$ is equal to $L\left(x,y\right)^{2}$ ($\left|L\left(x,y\right)\right|^{2}$), for $x,y\in X$.

It is natural to ask how certain relations between two bi-functions $L$ and $M$ reflect on the relations between $\det_{L}$ and $\det_{M}$. First, let us consider the case when $L$ and $M$ are rescalings.

\begin{proposition}\label{pdc}
Let $L$ and $M$ be bi-functions on $X$, and let $f,g:X\to\C$ be such that $M=f\otimes g L$. Then, $\det_{M}=\det_{\diag fg}\det_{L}$, i.e. for any $x_{1},...,x_{n}\in X$ we have
$$\det\nolimits_{M}\left(x_{1},...,x_{n}\right)=\prod\limits_{i=1}^{n}f\left(x_{i}\right)g\left(x_{i}\right)\det\nolimits_{L}\left(x_{1},...,x_{n}\right).$$
\end{proposition}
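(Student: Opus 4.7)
The plan is to reduce the identity to the standard multiplicativity of the determinant, after handling the non-distinct case separately via the convention introduced in the notations.

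First I would dispatch the degenerate case: if $x_{i}=x_{j}$ for some $i\ne j$, then by the convention on $\mu$ evaluated at a repeated tuple, both $\det_{M}\left(x_{1},\ldots,x_{n}\right)$ and $\det_{L}\left(x_{1},\ldots,x_{n}\right)$ are $0$, so the claimed equality holds trivially.

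Next I would assume $x_{1},\ldots,x_{n}$ are distinct and observe that the $n\times n$ matrix in question factors as a product of three matrices:
\[
\bigl[M\left(x_{i},x_{j}\right)\bigr]_{i,j=1}^{n} = \bigl[f\left(x_{i}\right)g\left(x_{j}\right)L\left(x_{i},x_{j}\right)\bigr]_{i,j=1}^{n} = D_{f}\cdot \bigl[L\left(x_{i},x_{j}\right)\bigr]_{i,j=1}^{n}\cdot D_{g},
\]
where $D_{f}=\diag\left(f\left(x_{1}\right),\ldots,f\left(x_{n}\right)\right)$ and $D_{g}=\diag\left(g\left(x_{1}\right),\ldots,g\left(x_{n}\right)\right)$. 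Indeed, the $\left(i,j\right)$ entry of $D_{f}[L(x_i,x_j)]D_{g}$ is $f\left(x_{i}\right)L\left(x_{i},x_{j}\right)g\left(x_{j}\right)$. Taking determinants and using multiplicativity gives
\[
\det\nolimits_{M}\left(x_{1},\ldots,x_{n}\right) = \det D_{f}\cdot \det\nolimits_{L}\left(x_{1},\ldots,x_{n}\right)\cdot \det D_{g} = \prod_{i=1}^{n}f\left(x_{i}\right)g\left(x_{i}\right)\det\nolimits_{L}\left(x_{1},\ldots,x_{n}\right),
\]
which is the desired formula. The right-hand side also equals $\det_{\diag fg}\left(x_{1},\ldots,x_{n}\right)\det_{L}\left(x_{1},\ldots,x_{n}\right)$ by the formula for $\det_{\diag h}$ on distinct tuples recorded just before the proposition, so the identity $\det_{M}=\det_{\diag fg}\det_{L}$ on $Fin\left(X\right)$ follows.

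There is no real obstacle here: the statement is essentially the row/column scaling property of determinants, and the only subtlety is the bookkeeping of the convention that $\det_{L}\left(x_{1},\ldots,x_{n}\right)=0$ when the arguments are not distinct, which is what makes the identity well-defined as a statement about functions on $Fin\left(X\right)$.
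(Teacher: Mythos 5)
Your proof is correct and is essentially the paper's own argument: the paper also observes that $\left[M\left(x_{i},x_{j}\right)\right]$ is obtained from $\left[L\left(x_{i},x_{j}\right)\right]$ by scaling the $i$-th row by $f\left(x_{i}\right)$ and the $i$-th column by $g\left(x_{i}\right)$, which is exactly your factorization $D_{f}\left[L\left(x_{i},x_{j}\right)\right]D_{g}$ followed by multiplicativity of the determinant. Your explicit handling of the repeated-points convention is a minor bit of bookkeeping the paper leaves implicit, but it does not constitute a different approach.
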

\begin{proof}
The matrix $\left[M\left(x_{i},x_{j}\right)\right]_{i,j=1}^{n}$ is obtained from the matrix $\left[L\left(x_{i},x_{j}\right)\right]_{i,j=1}^{n}$ by multiplying $i$-th row with $f\left(x_{i}\right)$ and $i$-th column with $g\left(x_{i}\right)$, for every $i\in\overline{1,n}$. Hence, the identity follows from the properties of determinants.
\end{proof}

On the other hand, it would be interesting to see what are the relations between $L$ and $M$ for which $\det_{L}=\det_{M}$. We immediately get $\widehat{L}=\widehat{M}$, and so $L$ and $M$ coincide on the diagonal. It is also easy to see that if $L$ and $M$ are symmetric (or Hermitean) then $\det_{L}=\det_{M}$ on sets of cardinality $1$ and $2$ if and only if $L^{2}=M^{2}$, (or $\left|L\right|=\left|M\right|$), and $\widehat{L}=\widehat{M}$. In particular, in this case $X_{L}=X_{M}$. Slightly less obvious property is given in the following proposition.

\begin{proposition}\label{cal}
Let $L$ and $M$ be bi-functions on $X$. Then if $\det_{L}=\det_{M}$, then $\det_{L+\diag h}=\det_{M+\diag h}$, for any $h:X\to\C$.
\end{proposition}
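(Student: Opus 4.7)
The strategy is to expand $\det_{L+\diag h}(x_1,\ldots,x_n)$ as a polynomial in the values $\det_L(S)$ for $S \subseteq \{x_1,\ldots,x_n\}$, with coefficients depending only on $h$; the identical formula then applies to $M$, and the hypothesis $\det_L = \det_M$ gives the conclusion term by term. Since both sides of the desired equality vanish on non-distinct tuples by the convention stated in the introduction, I only need to handle the case when $x_1,\ldots,x_n$ are distinct.

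Fix distinct $x_1,\ldots,x_n \in X$ and set $A = [L(x_i,x_j)]_{i,j=1}^n$ and $D = \diag(h(x_1),\ldots,h(x_n))$. The identity I would establish is
$$\det(A+D) = \sum_{S \subseteq \{1,\ldots,n\}} \Bigl(\prod_{i \notin S} h(x_i)\Bigr) \det(A_S),$$
where $A_S$ denotes the principal submatrix of $A$ with row/column indices in $S$ (with $\det A_\emptyset = 1$). Writing the $i$-th row of $A+D$ as the sum of the $i$-th row of $A$ and the vector $h(x_i) e_i^{\top}$, multilinearity of the determinant in the rows yields $2^n$ determinants, indexed by the subset $S \subseteq \{1,\ldots,n\}$ of rows kept from $A$. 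In each such matrix I would simultaneously permute rows and columns so that indices in $S$ precede those in $S^c$, which puts the matrix into block upper-triangular form with $A_S$ in the top-left block, $\diag(h(x_i): i \notin S)$ in the bottom-right block, and a zero bottom-left block (since each $S^c$-row has its unique nonzero entry in a column labelled by $S^c$). Applying the \emph{same} permutation to rows and columns leaves the determinant unchanged, so this term contributes $\bigl(\prod_{i \notin S} h(x_i)\bigr) \det(A_S)$, as claimed.

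Translated into the bi-function notation, the identity reads
$$\det\nolimits_{L+\diag h}(x_1,\ldots,x_n) = \sum_{S \subseteq \{x_1,\ldots,x_n\}} \Bigl(\prod_{x \notin S} h(x)\Bigr) \det\nolimits_L(S),$$
with the analogous formula holding for $M$ in place of $L$. The hypothesis $\det_L(S) = \det_M(S)$ for every $S \in Fin(X)$ then makes the two sums agree term by term, yielding the proposition. The only delicate point in the argument is the sign bookkeeping in the multilinear expansion, but the simultaneous row/column permutation trick bypasses it entirely; that is why I would phrase the auxiliary identity as a block-matrix computation rather than an iterated cofactor expansion.
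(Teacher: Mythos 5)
Your proof is correct, and it takes a genuinely different route from the paper's. The paper never writes down a closed formula: it proves a small lemma saying that changing a single diagonal entry changes the determinant by $\left(a-b\right)\det M$ (via multilinearity in one row plus Laplace expansion), uses this to show that $\det_{L}=\det_{M}$ is preserved when one adds $\alpha\,\diag\delta_{x}$ at a \emph{single} point $x$, and then reaches $L+\diag h$ by adding $h\left(x_{i}\right)\delta_{x_{i}}$ one point at a time, by induction over the finitely many points involved. You instead prove the classical expansion
$$\det\left(A+D\right)=\sum_{S\subseteq\left\{1,...,n\right\}}\Bigl(\prod_{i\notin S}h\left(x_{i}\right)\Bigr)\det\left(A_{S}\right),$$
which expresses $\det_{L+\diag h}$ as a universal polynomial in the principal minors of $L$ and the values of $h$, so the conclusion is immediate term by term. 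Your argument is sound: the multilinear expansion over rows, the identification of each term with a subset $S$, and the simultaneous row/column permutation putting each term in block upper-triangular form (with zero bottom-left block because each $S^{c}$-row has its unique nonzero entry in an $S^{c}$-column) are all valid, and the simultaneous permutation indeed kills all sign issues; the empty-set term causes no trouble since it does not involve $L$ or $M$ at all. What each approach buys: yours is stronger and more transparent, since it exhibits explicitly \emph{how} $\det_{L+\diag h}$ is determined by $\det_{L}$ and $h$ (this is the same identity underlying the fact that the coefficients of the characteristic polynomial are sums of principal minors); the paper's is lighter on bookkeeping, needing only a two-determinant difference identity and an induction, at the cost of leaving the dependence implicit.
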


In order to prove the proposition, we will need the following lemma.

\begin{lemma}
Let $M$ be a $n\times n$ complex matrix, let $\mu=\left(\mu_{1},...,\mu_{n}\right)\in\C^{n}$ be a row and let $\nu=\left(\nu_{1},...,\nu_{n}\right)\in\C^{n}$ be a column. Then for any $a,b\in\C$ we have $$\det\left[\begin{array}{rr} a & \mu \\ \nu & M \end{array}\right]-\det\left[\begin{array}{rr} b & \mu \\ \nu & M \end{array}\right]=\left(a-b\right)\det M.$$
\end{lemma}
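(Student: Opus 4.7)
The plan is to exploit multilinearity of the determinant in the first row. Write the first row of the $(n+1)\times(n+1)$ matrix on the left as a sum:
$$(a,\mu_{1},\ldots,\mu_{n}) = (b,\mu_{1},\ldots,\mu_{n}) + (a-b,0,\ldots,0).$$
Since the determinant is linear as a function of the first row while the remaining rows (namely $(\nu_{i},M_{i\cdot})$ for $i=1,\ldots,n$) are held fixed, we obtain
$$\det\left[\begin{array}{cc} a & \mu \\ \nu & M \end{array}\right] = \det\left[\begin{array}{cc} b & \mu \\ \nu & M \end{array}\right] + \det\left[\begin{array}{cc} a-b & 0 \\ \nu & M \end{array}\right].$$

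The remaining step is to evaluate the last determinant by Laplace expansion along the first row. Since all entries of the first row except the $(1,1)$-entry are zero, only the $(1,1)$-cofactor contributes, and that cofactor is exactly $\det M$. Hence
$$\det\left[\begin{array}{cc} a-b & 0 \\ \nu & M \end{array}\right] = (a-b)\det M,$$
and rearranging gives the claimed identity.

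There is no real obstacle here: the statement is a direct consequence of two standard properties of determinants (multilinearity in rows and cofactor expansion), and the only thing to watch is that $\mu$ is a row and $\nu$ is a column so that the block form makes sense as an $(n+1)\times(n+1)$ matrix. One could equivalently expand along the first column and split off the scalar factor $a-b$ the same way.
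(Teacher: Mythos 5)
Your proof is correct and is essentially identical to the paper's own argument: both use multilinearity of the determinant in the first row to reduce the difference to $\det\left[\begin{smallmatrix} a-b & 0 \\ \nu & M \end{smallmatrix}\right]$, and then Laplace expansion along the first row to obtain $\left(a-b\right)\det M$.
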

\begin{proof}
Using the fact that determinant is a polylinear functional we get $$\det\left[\begin{array}{rr} a & \mu \\ \nu & M \end{array}\right]-\det\left[\begin{array}{rr} b & \mu \\ \nu & M \end{array}\right]=\det\left[\begin{array}{rr} a-b & 0_{\C^{n}} \\ \nu & M \end{array}\right]=\left(a-b\right)\det M,$$ where the last equalty follows from the Laplace expansion over the first row.
\end{proof}

\begin{proof}[Proof of Proposition \ref{cal}]
Let $L$ and $M$ be bi-functions on $X$ such that $\det_{L}=\det_{M}$. Let us start with showing that for any $x\in X$ and $\alpha\in\C$ we have $\det_{L_{1}}=\det_{M_{1}}$, where $L_{1}=L+\alpha\diag \delta_{x}$ and $M_{1}=M+\alpha\diag \delta_{x}$. Let $x_{1},...,x_{n}\in X$ be distinct. If this collection of points does not contain $x$, then $\left.L_{1}\right|_{\left\{x_{1},...,x_{n}\right\}}=\left.L\right|_{\left\{x_{1},...,x_{n}\right\}}$ and $\left.M_{1}\right|_{\left\{x_{1},...,x_{n}\right\}}=\left.M\right|_{\left\{x_{1},...,x_{n}\right\}}$. Hence,
$$\det\nolimits_{L_{1}}\left(x_{1},...,x_{n}\right)=\det\nolimits_{L}\left(x_{1},...,x_{n}\right)=\det\nolimits_{M}\left(x_{1},...,x_{n}\right)=\det\nolimits_{M_{1}}\left(x_{1},...,x_{n}\right).$$
If $x$ is present among $x_{1},...,x_{n}$, without loss of generality we may assume that $x_{1}=x$. In this case it follows from the lemma that
\begin{align*}
\det\nolimits_{L_{1}}\left(x_{1},...,x_{n}\right)-\det\nolimits_{L}\left(x_{1},...,x_{n}\right)&=\alpha \det\nolimits_{L}\left(x_{2},...,x_{n}\right)\\=\alpha \det\nolimits_{M}\left(x_{2},...,x_{n}\right)&=\det\nolimits_{M_{1}}\left(x_{1},...,x_{n}\right)-\det\nolimits_{M}\left(x_{1},...,x_{n}\right),
\end{align*}
from where $\det_{L_{1}}\left(x_{1},...,x_{n}\right)=\det_{M_{1}}\left(x_{1},...,x_{n}\right)$.\medskip

Now, let us show that for any distinct $x_{1},...,x_{n}\in X$ we have $$\det\nolimits_{L+\diag h}\left(x_{1},...,x_{n}\right)=\det\nolimits_{M+\diag h}\left(x_{1},...,x_{n}\right).$$ Recursively, define a sequence $\left\{L_{i}\right\}_{i=0}^{n}$ of bi-functions on $X$ by $L_{0}=L$ and $L_{i}=L_{i-1}+h\left(x_{i}\right)\delta_{x_{i}}$, for every $i\in\overline{1,n}$. Define $\left\{M_{i}\right\}_{i=0}^{n}$ analogously. Using induction and the previous step we get that $\det_{L_{i}}=\det_{M_{i}}$, for every $i\in\overline{1,n}$. Also, note that $\left.L_{n}\right|_{\left\{x_{1},...,x_{n}\right\}}=\left.\left[L+\diag h\right]\right|_{\left\{x_{1},...,x_{n}\right\}}$ and $\left.M_{n}\right|_{\left\{x_{1},...,x_{n}\right\}}=\left.\left[M+\diag h\right]\right|_{\left\{x_{1},...,x_{n}\right\}}$, and so $$\det\nolimits_{L+\diag h}\left(x_{1},...,x_{n}\right)=\det\nolimits_{L_{n}}\left(x_{1},...,x_{n}\right)=\det\nolimits_{M_{n}}\left(x_{1},...,x_{n}\right)=\det\nolimits_{M+\diag h}\left(x_{1},...,x_{n}\right).$$ Since $x_{1},...,x_{n}$ were chosen arbitrarily, we conclude that $\det_{L+\diag h}=\det_{M+\diag h}$.
\end{proof}

From Proposition \ref{pdc} it follows that if $M$ to $L$ are reciprocal rescalings, then $\det_{L}=\det_{M}$. It turns out that in the class of symmetric bi-functions this assertion can be reversed.

\begin{theorem}\label{similar}
Symmetric bi-functions $L$ and $M$ on a set $X$ are $\pm1$ symmetric rescalings if and only if $\det_{L}=\det_{M}$ (as functions on $Fin\left(X\right)$).
\end{theorem}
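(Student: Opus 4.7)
The ``only if'' direction is immediate from Proposition \ref{pdc}: if $M = f\otimes f L$ with $f$ taking values in $\{-1,1\}$, then $\det_M(x_1,\ldots,x_n) = \prod_i f(x_i)^2 \det_L(x_1,\ldots,x_n) = \det_L(x_1,\ldots,x_n)$.

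For the converse, the plan is to assume $\det_L = \det_M$ on $Fin(X)$ and to construct $f:X \to \{-1,1\}$ that symmetrically rescales $L$ to $M$. First, by Proposition \ref{components} I can treat one graph-component of $X_L$ at a time, so I assume $X_L$ is connected. Comparing the two functions on singletons gives $\widehat L = \widehat M$, and on $2$-element sets (using $L = L', M = M'$) gives $L(x,y)^2 = M(x,y)^2$; in particular $X_L = X_M$, and for every edge $(x,y)$ the ratio $\sigma(x,y) := M(x,y)/L(x,y)$ lies in $\{-1,1\}$. Expanding the $3 \times 3$ determinant and canceling the $\widehat L$ and squared off-diagonal contributions, the equality on $3$-element sets collapses to $L(x,y)L(y,z)L(z,x) = M(x,y)M(y,z)M(z,x)$, i.e.\ the triangle identity $\sigma(x,y)\sigma(y,z)\sigma(z,x) = 1$ whenever all three edges are present.

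The main step — and the place where I expect the most work — will be to upgrade this to a full cycle condition: for every simple cycle $v_1 - v_2 - \cdots - v_n - v_1$ in $X_L$,
$$\prod_{i=1}^n \sigma(v_i, v_{i+1}) = 1 \qquad (\text{indices mod } n).$$
I plan to argue by induction on $n \geq 3$, with the triangle identity as the base. In the inductive step I distinguish two cases. If the cycle admits a chord $(v_i, v_j)$ (i.e.\ $L(v_i,v_j) \neq 0$ with $v_i, v_j$ nonconsecutive), I split along the chord into two strictly shorter cycles, apply the inductive hypothesis to each, and multiply; the chord contributes $\sigma(v_i,v_j)^2 = 1$ and the remaining factors reconstitute the original cycle product. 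If the cycle is chord-free, I expand $\det_L(v_1,\ldots,v_n)$ by the Leibniz formula. Chord-freeness forces every $\tau \in S_n$ with a nonzero contribution to satisfy $\tau(i) \in \{i-1,i,i+1\} \pmod n$, and a short case analysis on cycle lengths shows that such $\tau$ is either a disjoint union of adjacent transpositions with fixed points, or one of the two orientations of the full $n$-cycle. Thanks to $\widehat L = \widehat M$ and $L^2 = M^2$, every ``matching'' contribution is identical for $L$ and $M$; the two $n$-cycle contributions combine to $2(-1)^{n-1}\prod_i L(v_i,v_{i+1})$ (resp.\ $M$). So $\det_L = \det_M$ collapses to $\prod_i L(v_i,v_{i+1}) = \prod_i M(v_i,v_{i+1})$, i.e.\ $\prod_i \sigma(v_i,v_{i+1}) = 1$.

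Once the cycle condition is in hand, the construction of $f$ is standard. I fix $x_0 \in X$, set $f(x_0) = 1$, and for each $x \in X$ choose a path $x_0 = w_0, w_1, \ldots, w_k = x$ in $X_L$ (available by connectedness) and put $f(x) := \prod_{i=0}^{k-1}\sigma(w_i,w_{i+1})$; the cycle condition guarantees that $f$ does not depend on the chosen path and is $\{-1,1\}$-valued. By construction $\sigma(x,y) = f(x)f(y)$ for every edge of $X_L$, so $M(x,y) = f(x)f(y)L(x,y)$ on edges, while both sides vanish off edges. Hence $f$ symmetrically $\pm 1$-rescales $L$ to $M$. One could alternatively organize the extension of the rescaling by appealing to Lemma \ref{zorn} and the criterion attached to \ref{*}, but the combinatorial heart — the cycle condition — would remain the crucial input.
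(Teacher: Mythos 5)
Your proposal is correct, and its skeleton coincides with the paper's: reduction to a connected component of $X_{L}$, extraction of $\widehat{L}=\widehat{M}$ and $L^{2}=M^{2}$ from minors of size $1$ and $2$, a cycle-product condition $\prod_{i}\sigma\left(v_{i},v_{i+1}\right)=1$ proved by induction with chord-splitting, and the path-product construction of $f$. Where you genuinely differ is the chord-free case: the paper invokes its Lemma \ref{matrix}, a determinant identity for cycle-shaped matrices proved (only in sketch) by a recursive Laplace expansion, whereas you expand $\det_{L}\left(v_{1},...,v_{n}\right)$ directly by the Leibniz formula and classify the contributing permutations as matchings of the cycle graph together with the two rotations. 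That classification is right (a permutation cycle of length at least $3$ whose consecutive elements are adjacent in $C_{n}$ must traverse all of $C_{n}$), the matching terms agree for $L$ and $M$ because $\widehat{L}=\widehat{M}$ and $L^{2}=M^{2}$, and by symmetry the two rotations contribute $2\left(-1\right)^{n-1}\prod_{i}L\left(v_{i},v_{i+1}\right)$, so the collapse to the edge-product identity is valid; in effect you give a complete, self-contained proof of the direction of Lemma \ref{matrix} that the theorem needs, where the paper leaves a sketch. What the paper's route buys in exchange is reusability and a stronger statement: Lemma \ref{matrix} is an if-and-only-if that also powers Example \ref{ex}, and the paper deduces Theorem \ref{similar} from a quantitative proposition (if all induced cycles of $X_{L}$ have length less than $l$, minors of size less than $l$ suffice), which your direct argument does not recover. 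One small point you should make explicit: your cycle condition is stated for simple cycles, while well-definedness of $f$ compares two paths whose concatenation is merely a closed walk, so you need the routine reduction (split the walk at a repeated vertex; backtracked edges contribute $\sigma^{2}=1$) that the paper folds into its induction by allowing non-simple cycles.
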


This result for the case when $X$ is finite was first proven in \cite{es} and then in \cite{oeding} in relation to Principal Minor Assignment Problem. In \cite{rkt} the algorithmical side of the problem was considered. Below we adapt combinatorial proof from \cite{es} and \cite{rkt} to the infinite case. However, in Section \ref{geoin} we will present a geometric interpretation of the theorem and give an alternative geometric proof under some restrictions. Note that since $\det_{L}=\det_{L'}$ the result is specific to the case of symmetric bi-functions, although under some additional assumptions $\det_{L}=\det_{M}$ implies that either $L$ and $M$, or $L'$ and $M$ are $\pm1$ symmetric rescalings (see e.g. \cite{bc} which discusses the skew-symmetric case, and also \cite{loewy}). An example of Hermitean bi-functions $L$ and $M$ on $\left\{1,2,3,4\right\}$ such that $\det_{L}=\det_{M}$, but neither $M$ nor $M'$ is a rescaling of $L$ are given by the matrices

$$\left[\begin{array}{rrrr} 4 & e^{\ii\frac{\pi}{12}} & 1 & 1 \\ e^{-\ii\frac{\pi}{12}} & 4& 1 & e^{\ii\frac{\pi}{4}} \\ 1 & 1 & 4 & e^{\ii\frac{\pi}{3}} \\ 1 & e^{-\ii\frac{\pi}{4}} & e^{-\ii\frac{\pi}{3}} & 4 \end{array}\right] \mbox{ and } \left[\begin{array}{rrrr} 4 & e^{\ii\frac{\pi}{6}} & e^{\ii\frac{\pi}{12}} & 1 \\ e^{-\ii\frac{\pi}{6}} & 4& 1 & e^{\ii\frac{\pi}{6}} \\ e^{-\ii\frac{\pi}{12}} & 1 & 4 & e^{\ii\frac{\pi}{12}} \\ 1 & e^{-\ii\frac{\pi}{6}} & e^{-\ii\frac{\pi}{12}} & 4 \end{array}\right]. $$
\

\textbf{Proof of the theorem. }In order to prove the theorem we will need the following technical lemma.

\begin{lemma}\label{matrix}
Let $n\ge 3$ and let $a_{1},...,a_{n}$, $b_{1},...,b_{n}$ and $c_{1},...,c_{n}$ be complex numbers, such that $b_{i}=\pm c_{i}$, for every $i\in\overline{1,n}$. Then $\prod\limits_{i=1}^{n}b_{i}=\prod\limits_{i=1}^{n}c_{i}$ if and only if
$$\det \left[\begin{array}{rrrrrr} a_{1} & b_{1} & 0 & \cdots & 0 & b_{n} \\ b_{1} & a_{2} & b_{2} & \ddots &  & 0 \\ 0 & b_{2} & \ddots & \ddots & \ddots & \vdots \\ \vdots & \ddots & \ddots & \ddots & b_{n-2} & 0 \\ 0 &  & \ddots & b_{n-2} & a_{n-1} & b_{n-1} \\ b_{n} & 0 & \cdots & 0 & b_{n-1} & a_{n} \end{array}\right]=\det \left[\begin{array}{rrrrrr} a_{1} & c_{1} & 0 & \cdots & 0 & c_{n} \\ c_{1} & a_{2} & c_{2} & \ddots &  & 0 \\ 0 & c_{2} & \ddots & \ddots & \ddots & \vdots \\ \vdots & \ddots & \ddots & \ddots & c_{n-2} & 0 \\ 0 &  & \ddots & c_{n-2} & a_{n-1} & c_{n-1} \\ c_{n} & 0 & \cdots & 0 & c_{n-1} & a_{n} \end{array}\right].$$
\end{lemma}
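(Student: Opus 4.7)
The plan is to expand each determinant via Leibniz's formula and observe that the only terms where a $b_i$ (or $c_i$) appears with an odd exponent come from the two ``going-around-the-cycle'' permutations; all remaining terms depend only on $b_i^{2}=c_i^{2}$ and therefore cancel when we subtract.

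Denote the two matrices above by $A_{b}$ and $A_{c}$. A permutation $\sigma\in S_{n}$ contributes nonzero to $\det A_{b}$ only if $\left(A_{b}\right)_{i,\sigma\left(i\right)}\ne 0$ as a polynomial for every $i$, i.e.\ $\sigma\left(i\right)\in\left\{i-1,i,i+1\right\}$ mod $n$. These are precisely the permutations supported on the cycle graph $C_{n}$ with loops, so their cycle decompositions consist of fixed points, adjacent transpositions (including the ``wrap-around'' transposition $\left(1,n\right)$), or the full $n$-cycle of the graph. Because $n\ge 3$, the only such permutations containing a cycle of length $\ge 3$ are the two $n$-cycles $\sigma_{+}\left(i\right)=i+1$ and $\sigma_{-}\left(i\right)=i-1$ (mod $n$).

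The first step is to compute the contributions of $\sigma_{\pm}$. Since the matrix is symmetric and $\left(A_{b}\right)_{i,i+1}=\left(A_{b}\right)_{i+1,i}=b_{i}$ (with indices mod $n$), the products over $i$ both equal $b_{1}b_{2}\cdots b_{n}$; both signs equal $\left(-1\right)^{n-1}$, giving a combined contribution of $2\left(-1\right)^{n-1}\prod_{i=1}^{n}b_{i}$. The second step is to observe that for every other contributing $\sigma$, the cycle decomposition uses only fixed points and adjacent transpositions, and each transposition $\left(j,j+1\right)$ in $\sigma$ contributes $\left(A_{b}\right)_{j,j+1}\left(A_{b}\right)_{j+1,j}=b_{j}^{2}$. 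Thus each monomial in these other terms is a product of $a_{i}$'s and $b_{j}^{2}$'s. Writing this out we obtain
\begin{equation*}
\det A_{b}=P\left(a_{1},\dots,a_{n},b_{1}^{2},\dots,b_{n}^{2}\right)+2\left(-1\right)^{n-1}\prod_{i=1}^{n}b_{i},
\end{equation*}
for a polynomial $P$ independent of the matrix under consideration, and similarly for $\det A_{c}$.

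The third step is the immediate conclusion: since $b_{i}=\pm c_{i}$ implies $b_{i}^{2}=c_{i}^{2}$, we get
\begin{equation*}
\det A_{b}-\det A_{c}=2\left(-1\right)^{n-1}\left(\prod_{i=1}^{n}b_{i}-\prod_{i=1}^{n}c_{i}\right),
\end{equation*}
which vanishes if and only if $\prod b_{i}=\prod c_{i}$, as required. The only real obstacle is verifying carefully that permutations supported on $C_{n}$ with loops really split into the two $n$-cycles plus products of loops and adjacent transpositions -- this is where $n\ge 3$ is used, since for $n=2$ the ``going-around'' permutation would coincide with an adjacent transposition and the separation between even and odd contributions in $b_{i}$ would break down.
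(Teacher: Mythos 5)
Your proof is correct, but it takes a genuinely different route from the paper. The paper proves the lemma by induction on $n$: expanding along the first row (and then along first columns of the resulting minors) it derives the recursion
\begin{equation*}
d_{n}\left(a_{1},...,a_{n},b_{1},...,b_{n}\right)+2\left(-1\right)^{n}\prod\limits_{i=1}^{n}b_{i}=a_{1}d_{n-1}\left(\cdot\right)-b_{1}^{2}d_{n-2}\left(\cdot\right)-b_{n}^{2}d_{n-2}\left(\cdot\right),
\end{equation*}
where the right-hand side involves only squares of the off-diagonal entries, and then invokes the induction hypothesis (the paper only sketches this). You instead argue directly from the Leibniz expansion: the support of the matrix is the cycle graph $C_{n}$ with loops, so every contributing permutation is either one of the two full $n$-cycles or a product of fixed points and disjoint (cyclically) adjacent transpositions; the latter terms depend only on the $a_{i}$ and $b_{j}^{2}$, while the two $n$-cycles contribute $2\left(-1\right)^{n-1}\prod b_{i}$. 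This yields the clean structural identity $\det A_{b}=P\left(a_{1},\dots,a_{n},b_{1}^{2},\dots,b_{n}^{2}\right)+2\left(-1\right)^{n-1}\prod b_{i}$ with $P$ universal, from which the lemma is immediate, and it even isolates exactly where $n\ge 3$ is needed (for $n=2$ the wrap-around coincides with an adjacent transposition). Your combinatorial classification is sound: any cycle of length at least $3$ in a permutation supported on $C_{n}$ with loops is forced, step by step, to be a full rotation. What your approach buys is a complete, induction-free argument giving an explicit description of the determinant; what the paper's recursion buys is reusability, since the same recursion is invoked again in Example 3.8 to compare the matrices $L_{n}^{+}$ and $L_{n}^{-}$, though your closed form would serve that purpose equally well.
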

\begin{proof}
Denote the determinants in the statement by $d_{n}\left(a_{1},...,a_{n},b_{1},...,b_{n}\right)$ and \linebreak $d_{n}\left(a_{1},...,a_{n},c_{1},...,c_{n}\right)$. For $a_{1},a_{2},b\in \C$ define $d_{2}\left(a_{1},a_{2},b,0\right)=a_{1}a_{2}-b^{2}$. It is clear that $d_{2}\left(a_{1},a_{2},b,0\right)=d_{2}\left(a_{1},a_{2},-b,0\right)$.

The proof is done by induction. Here we only provide a sketch. The case when $n=3$ is a simple computation.  For $n>3$ expanding the determinant by the first row, and then by the first columns of the (second and third) obtained matrices we get the recursive formula
\begin{align*}
 d_{n}\left(a_{1},...,a_{n},b_{1},...,b_{n}\right)+2\left(-1\right)^{n}\prod\limits_{i=1}^{n}b_{i}=a_{1}d_{n-1}\left(a_{2},...,a_{n},b_{2},...,b_{n-1},0\right)\\
-b_{1}^{2}d_{n-2}\left(a_{3},...,a_{n},b_{3},...,b_{n-1},0\right)-b_{n}^{2}d_{n-2}\left(a_{2},...,a_{n-1},b_{2},...,b_{n-2},0\right).
\end{align*}
Using this formula and the hypothesis of induction the result follows.
\end{proof}

Recall that an \emph{induced cycle} in a graph is a cycle such that the vertices are adjacent in the cycle if and only if they are adjacent in the original graph. Theorem \ref{similar} follows immediately from the following result, essentially proven in \cite{es}.

\begin{proposition}Let $L$ be a symmetric bi-function on a set $X$ such that the lengths of all the induced cycles in $X_{L}$ are less than $l\in\N\backslash\left\{1,2\right\}\cup\left\{\8\right\}$. Then any symmetric bi-function $M$ on $X$ is a $\pm1$ symmetric rescaling of $L$ whenever $\det_{L}=\det_{M}$ on all of the subsets of $X$ of cardinality less than $l$.
\end{proposition}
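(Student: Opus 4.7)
The plan is to reduce to a single connected component, construct $f : Y \to \{-1, +1\}$ by propagating values from a base vertex along walks, and verify consistency via the induced-cycle hypothesis and Lemma \ref{matrix}. The cardinality $1$ case of $\det_L = \det_M$ gives $\widehat{L} = \widehat{M}$, and the cardinality $2$ case together with symmetry gives $L(x,y)^2 = M(x,y)^2$, so $M(x,y) = \pm L(x,y)$ and in particular $X_L = X_M$. By Proposition \ref{components}, it suffices to produce a $\pm 1$ symmetric rescaling on each component $Y$ of $X_L$, so I fix such a $Y$ and a base vertex $x_0 \in Y$.

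For any walk $w = (y_0, \ldots, y_k)$ in $Y_L$ (consecutive vertices adjacent), set $\sigma(w) = \prod_{i=0}^{k-1} M(y_i, y_{i+1})/L(y_i, y_{i+1}) \in \{-1, +1\}$; this is well-defined since edges of $Y_L$ are exactly where $L$ is nonzero. If $\sigma(w) = 1$ for every closed walk, then $f(y) := \sigma(w)$ for any walk $w$ from $x_0$ to $y$ is a well-defined $\pm 1$-valued function with $f(x_0) = 1$, and a direct check yields $f(x)f(y)L(x,y) = M(x,y)$ on every edge, with both sides vanishing off the edges. The whole task therefore reduces to proving $\sigma(C) = 1$ for every simple cycle $C$. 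The standard chord argument handles non-induced cycles: if $C = (x_1, \ldots, x_n, x_1)$ has a chord between $x_i$ and $x_j$, it splits into two strictly shorter cycles sharing only that chord; when one multiplies the two sub-$\sigma$'s, the chord edge contributes the factor $M(x_i, x_j)/L(x_i, x_j)$ twice (using symmetry), hence squared to $1$, so $\sigma(C)$ equals the product of the two sub-$\sigma$'s. An induction on length then reduces the problem to induced cycles.

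For an induced cycle $C = (z_1, \ldots, z_n, z_1)$, the hypothesis $n < l$ yields $\det_L(z_1, \ldots, z_n) = \det_M(z_1, \ldots, z_n)$, and being induced means $L(z_i, z_j) = M(z_i, z_j) = 0$ whenever the indices $i, j$ are not cyclically adjacent. Consequently the matrices $[L(z_i, z_j)]$ and $[M(z_i, z_j)]$ have exactly the tridiagonal-plus-corner shape of Lemma \ref{matrix}, share the diagonal $a_i = \widehat{L}(z_i)$, and have off-diagonal entries $b_i = L(z_i, z_{i+1})$, $c_i = M(z_i, z_{i+1})$ with $b_i = \pm c_i$; equality of their determinants, combined with Lemma \ref{matrix}, forces $\prod_i b_i = \prod_i c_i$, i.e. $\sigma(C) = 1$. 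The main obstacle I anticipate is articulating the chord induction cleanly, particularly the cancellation at the shared chord and the observation that any cycle of length $\geq l$ must fail to be induced (by hypothesis) and therefore contains a chord; once this graph-theoretic reduction is in place, Lemma \ref{matrix} finishes the induced case essentially mechanically.
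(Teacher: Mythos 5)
Your proposal is correct and follows essentially the same route as the paper's proof: reduce to a connected component via Proposition \ref{components}, derive $\widehat{L}=\widehat{M}$ and $L^{2}=M^{2}$ from the small-cardinality cases, define $f$ by propagating edge ratios $M/L$ along paths from a base vertex, and establish consistency by an induction on cycle length in which chords split non-induced cycles (the chord ratio squaring to $1$) and Lemma \ref{matrix} disposes of the induced cycles. The only cosmetic difference is that you phrase consistency in terms of closed walks reducing to simple cycles, whereas the paper folds repeated vertices directly into the induction hypothesis; both handle that point equally well.
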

\begin{proof}
Since $L$ and $M$ are symmetric with $\det_{L}=\det_{M}$ on all of the subsets of $X$ of cardinality less than $l>2$, it follows that $\widehat{L}=\widehat{M}$ and $L^{2}=M^{2}$. In the light of Proposition \ref{components} we can assume that $X_{L}=X_{M}$ is connected. Let us start with showing that if $x_{0},x_{1},...,x_{n}=x_{0}$, $n>2$, is a cycle in $X_{L}$, then $\prod\limits_{i=0}^{n-1}\frac{M\left(x_{i},x_{i+1}\right)}{L\left(x_{i},x_{i+1}\right)}=1$. In the case when it is an induced cycle we have $n<l$ and the claim follows from the Lemma \ref{matrix} applied to the numbers $a_{i}=L\left(x_{i},x_{i}\right)=M\left(x_{i},x_{i}\right)$, $b_{i}=L\left(x_{i},x_{i+1}\right)$ and $c_{i}=M\left(x_{i},x_{i+1}\right)$, $i\in \overline{0,n-1}$.

Now argue by induction. For $n=3$ any cycle is induced, and so the equality holds. Assume that the claim is true for all $k\in\overline{3,n}$ and let $x_{0},x_{1},...,x_{n},x_{n+1}=x_{0}$ be a cycle. Without loss of generality, we may assume that all $x_{0},x_{1},...,x_{n}$ are all different (otherwise by the hypothesis of induction, the product becomes the product of several copies of $1$). Since we only need to consider non-induced cycles, without loss of generality we may assume that there is $k\in\overline{2,n-1}$ such that $x_{0}$ and $x_{k}$ are joined with an edge. Then, from the hypothesis of induction applied to the cycles $x_{0},x_{1},...,x_{k},x_{0}$ and $x_{0},x_{k},x_{k+1},...,x_{n},x_{n+1}=x_{0}$ we have that $$\frac{M\left(x_{0},x_{k}\right)^{2}}{L\left(x_{0},x_{k}\right)^{2}}\prod\limits_{i=0}^{n}\frac{M\left(x_{i},x_{i+1}\right)}{L\left(x_{i},x_{i+1}\right)}=1.$$ Hence, the claim follows since $L\left(x_{0},x_{k}\right)^{2}=M\left(x_{0},x_{k}\right)^{2}\ne 0$.\medskip

Fix $z\in X$. Define $f:X\to\left\{-1,1\right\}$ by $f\left(x\right)=\prod\limits_{i=0}^{n-1}\frac{M\left(x_{i},x_{i+1}\right)}{L\left(x_{i},x_{i+1}\right)}$, where $z=x_{0},x_{1},...,x_{n}=x$ is a path in $X_{L}$ from $z$ to $x$ (this product is always equal to $\pm 1$ as $L^{2}=M^{2}$). The function is well-defined, since if $z=x_{0},x_{1},...,x_{n}=x$ and $z=y_{0},y_{1},...,y_{m}=x$ are two paths, then $z=x_{0},x_{1},...,x_{n}=x=y_{m},...,y_{0}=z$ is a cycle, and so from the claim above $$\prod\limits_{i=0}^{n-1}\frac{M\left(x_{i},x_{i+1}\right)}{L\left(x_{i},x_{i+1}\right)}=\left(\prod\limits_{i=0}^{m-1}\frac{M\left(y_{i},y_{i+1}\right)}{L\left(y_{i},y_{i+1}\right)}\right)^{-1}=\prod\limits_{i=0}^{m-1}\frac{M\left(y_{i},y_{i+1}\right)}{L\left(y_{i},y_{i+1}\right)}.$$

Now let us show that $M=f\otimes f L$. If $x,y\in X$ are not joined by an edge in $X_{L}$, then $M\left(x,y\right)=0=f\left(x\right)f\left(y\right)L\left(x,y\right)$. Assume that $x,y\in X$ are joined by an edge. Let $z=x_{0},x_{1},...,x_{n}=x$ be a path from $z$ to $x$. Then, $z=x_{0},x_{1},...,x_{n}=x,x_{n+1}=y$ is a path from $z$ to $y$. Hence, $f\left(y\right)=\frac{M\left(x,y\right)}{L\left(x,y\right)}f\left(x\right)$, and since $f\left(x\right)=\pm 1=\frac{1}{f\left(x\right)}$ we conclude that $M\left(x,y\right)=f\left(x\right)f\left(y\right)L\left(x,y\right)$.
\end{proof}

One can ask what other characteristics of the graph $X_{L}$ can be used to reduce the size of sets on which $\det_{L}=\det_{M}$ has to be tested in order to conclude that symmetric $L$ and $M$ are $\pm1$ symmetric rescalings. Recall that the \emph{(closed) neighborhood} of a vertex in a graph is the set of all adjacent vertices (together with the original vertex). More generally, the (closed) $n$-th neighborhood of a vertex in a graph is the set of all vertices of distance $1,2,...,n$ ($0,1,...,n$). The component that contains a vertex is the union of all of its closed neighborhoods. \emph{Radius} of a graph is the minimal $n\in\N$ such that there is a vertex $z$ such that the graph is equal to the closed $n$-th neighborhood of $z$.

\begin{proposition}\label{radius}Let $L$ be a symmetric bi-function on a set $X$ such that the radius of every component of $X_{L}$ does not exceed $l\in\N$. Then any symmetric bi-function $M$ on $X$ is a $\pm1$ symmetric rescaling of $L$ whenever $\det_{L}=\det_{M}$ on all of the subsets of $X$ of cardinality at most $2l+1$.
\end{proposition}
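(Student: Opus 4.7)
The plan is to mimic the argument of the preceding proposition, but replace the step of defining $f$ by products along arbitrary paths with products along \emph{short} paths anchored at a center of the component. By Proposition \ref{components}, we may assume $X_L$ is connected; evaluating the hypothesis on sets of cardinality $1$ and $2$ gives $\widehat{L} = \widehat{M}$ and $L^2 = M^2$, so in particular $X_L = X_M$. Pick a center $z$ of $X_L$, so every $x \in X$ admits a simple path $P_x = (z = y_0^x, y_1^x, \ldots, y_{k_x}^x = x)$ in $X_L$ of length $k_x \leq l$. Fix such a path for each $x$, and define $f : X \to \{-1, 1\}$ by $f(x) = \prod_{i=0}^{k_x - 1} M(y_i^x, y_{i+1}^x)/L(y_i^x, y_{i+1}^x)$ (taking $f(z) = 1$). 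The factors are all $\pm 1$ since $L^2 = M^2$.

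The conclusion $M = f \otimes f L$ reduces (off the diagonal, using $L(x,y) = 0 \Leftrightarrow M(x,y) = 0$) to showing that for each edge $(x, y)$ of $X_L$, the concatenated closed walk $W = P_x \cdot (x, y) \cdot P_y^{-1}$ of length at most $2l + 1$ satisfies $\prod_{(u, v) \in W} M(u,v)/L(u,v) = 1$; since $f(y)^2 = 1$, this is equivalent to $M(x, y) = f(x) f(y) L(x, y)$.

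The central claim is then that the walk product above equals $1$ for \emph{every} closed walk in $X_L$ of length at most $2l + 1$. One proves this by successively extracting closed subwalks at the first repeated vertex, thereby decomposing $W$ into trivial length-$2$ backtracks $u \to v \to u$ (each contributing $(M(u,v)/L(u,v))^2 = 1$ by $L^2 = M^2$) and simple cycles of length at most $2l + 1$. For each simple cycle $x_0, x_1, \ldots, x_n = x_0$ of length $n \leq 2l + 1$ one runs exactly the induction from the preceding proposition: induced cycles are handled by Lemma \ref{matrix} applied to the vertex set of the cycle (cardinality $n \leq 2l + 1$, so the hypothesis $\det_L = \det_M$ is available); non-induced cycles are split along a chord into two strictly shorter cycles, each still of length at most $2l + 1$, to which the induction hypothesis applies.

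The only non-routine step is the walk-to-cycles decomposition. The point is that when a closed walk has a repeated vertex $v_i = v_j$ with $i < j$, the block $v_i, v_{i+1}, \ldots, v_j$ can be split off as an independent closed sub-walk, and the remaining walk $v_0, \ldots, v_i, v_{j+1}, \ldots, v_n$ strictly shortens. Iterating yields only simple cycles and backtracks, each of length at most the original, hence $\leq 2l + 1$. This keeps every induced sub-cycle encountered in the induction within the cardinality threshold of the hypothesis, and the result follows. The bound $2l+1$ is therefore exactly what is required so that the entire concatenated walk $P_x \cdot (x,y) \cdot P_y^{-1}$, together with every cycle extracted from it, stays within the allowed range.
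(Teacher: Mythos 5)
Your proposal is correct and follows essentially the same route as the paper's (sketched) proof: reduce to a connected component, anchor length-$\le l$ paths at a center $z$, and reduce the edge identity $M(x,y)=f(x)f(y)L(x,y)$ to the claim that the $\pm1$ edge-ratio product around any closed walk of length at most $2l+1$ equals $1$, proved via Lemma \ref{matrix} for induced cycles and chord-splitting induction otherwise. The only differences are presentational: you fix one path per vertex (avoiding the paper's well-definedness step for shortest paths) and you make explicit the walk-into-simple-cycles-and-backtracks decomposition that the paper leaves implicit in its appeal to the preceding proposition's induction.
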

\begin{proof}
We will provide a sketch of the proof. Again, we only need to consider the case when $X_{L}$ is connected.

In the same way as above one can show that if $x_{0},x_{1},...,x_{n}=x_{0}$, $2l+1\ge n>2$, is a cycle in $X$, then $\prod\limits_{i=0}^{n-1}\frac{M\left(x_{i},x_{i+1}\right)}{L\left(x_{i},x_{i+1}\right)}=1$. Fix $z\in X$ such that $X$ is the closed $l$-th neighborhood of $z$, and define $f:X\to\left\{-1,1\right\}$ by $f\left(x\right)=\prod\limits_{i=0}^{n-1}\frac{M\left(x_{i},x_{i+1}\right)}{L\left(x_{i},x_{i+1}\right)}$, where $z=x_{0},x_{1},...,x_{n}=x$ is a shortest path in $X_{L}$ from $z$ to $x$ (and so $n\le l$). Analogously to the previous proof we can show that $f$ is well-defined.

For $x,y\in X$ we need to show that if $\left(x,y\right)$ is an edge in $X_{L}$, then $M\left(x,y\right)=f\left(x\right)f\left(y\right)L\left(x,y\right)$. Let $z=x_{0},x_{1},...,x_{n}=x$ and $z=y_{0},y_{1},...,y_{m}=y$ be shortest paths from $z$ to $x$ and $y$, respectively. Then using the fact that $z=x_{0},x_{1},...,x_{n}=x,y=y_{m},...,y_{1},y_{0}=z$ is a cycle of length $m+n+1\le 2l+1$, from the claim above we can deduce the required equality.
\end{proof}

\begin{remark}
It is possible for a graph to have an infinite radius but contain only small induced cycles. Indeed, the graph of a bi-function $L$ from Example \ref{ex} below has an infinite radius and no induced cycles. It is also possible for a graph of a small radius to contain arbitrarily large induced cycles. Indeed, consider a disconnected union of the cycles of all possible length (starting with $3$), choose a vertex in the first cycle and join that vertex with every other vertex of the union. The obtained graph contains induced cycles of all lengths, but its radius is $1$.
\qed\end{remark}

In the case when there is $y\in X$ such that $L\left(\cdot,y\right)$ and $L\left(y,\cdot\right)$ do not vanish, the radius of $X_{L}$ is $1$, which gives us the following criterion.

\begin{corollary}\label{nv} Let $L$ and $M$ be non-degenerate symmetric bi-functions on $X$. Then the following are equivalent:
\item[(i)] $\frac{\widehat{M}}{\widehat{L}}>0$, and for all $x,y,z\in X$ we have
 $$L\left(x,y\right)L\left(y,z\right)L\left(z,x\right)=M\left(x,y\right)M\left(y,z\right)M\left(z,x\right);$$
\item[(ii)] $L^{2}=M^{2}$, $\widehat{L}=\widehat{M}$, and \ref{*} holds for all $x,y,z\in X$;
\item[(iii)] $\det_{L}=\det_{M}$ on sets of cardinality $1$, $2$ and $3$.

If moreover there is $y\in X$ such that $L\left(\cdot,y\right)$ does not vanish, then the conditions (i)-(iii) are equivalent to the fact that $L$ and $M$ are symmetric $\pm1$ rescalings.
\end{corollary}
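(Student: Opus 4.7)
The plan is to unpack condition (iii) into three concrete identities on the entries of $L$ and $M$, show that each of (i) and (ii) reduces to the same list, and then derive the symmetric $\pm 1$ rescaling from the non-vanishing hypothesis.

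First I would compute the small determinants: for distinct $x,y,z\in X$, symmetry and direct expansion give $\det_L(x)=\widehat L(x)$, $\det_L(x,y)=\widehat L(x)\widehat L(y)-L(x,y)^2$, and
\[\det\nolimits_L(x,y,z)=\widehat L(x)\widehat L(y)\widehat L(z)+2L(x,y)L(y,z)L(z,x)-\widehat L(x)L(y,z)^2-\widehat L(y)L(z,x)^2-\widehat L(z)L(x,y)^2,\]
so condition (iii) is equivalent to the three identities $\widehat L=\widehat M$, $L^2=M^2$, and $L(x,y)L(y,z)L(z,x)=M(x,y)M(y,z)M(z,x)$ for all $x,y,z\in X$.

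Next, to establish (i)$\Leftrightarrow$(iii), I would specialize the triple-product identity: setting $x=y=z$ yields $\widehat L(x)^3=\widehat M(x)^3$; since $\widehat L$ is non-vanishing and $\widehat M/\widehat L$ is a positive real cube root of $1$, this forces $\widehat L=\widehat M$. Setting $z=x$ then gives $L(x,y)^2\widehat L(x)=M(x,y)^2\widehat L(x)$, hence $L^2=M^2$; the reverse implication is immediate. For (ii)$\Leftrightarrow$(iii), once $\widehat L=\widehat M$ is non-vanishing, identity \ref{*} cancels its $\widehat L(y)$ factors and reduces to $M(x,y)M(y,z)L(z,x)=L(x,y)L(y,z)M(z,x)$; multiplying by $M(z,x)$ and using $M^2=L^2$ converts this to the triple-product identity when $L(z,x)\neq 0$, while the zero case is trivial on both sides, and the reverse passage is identical.

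For the moreover clause, Proposition \ref{pdc} immediately gives $\det_L=\det_M$ from a symmetric $\pm 1$ rescaling. Conversely, if $L(\cdot,y_0)$ is non-vanishing, then every $x\in X$ is adjacent to $y_0$ in $X_L$, so $X_L$ is connected of radius $1$, and Proposition \ref{radius} with $l=1$ yields the symmetric $\pm 1$ rescaling directly from (iii). The main obstacle will be the bookkeeping in the (ii)$\Leftrightarrow$(iii) step: the natural algebraic conversions superficially require non-zero denominators, but this is defused by observing that $L^2=M^2$ forces $L$ and $M$ to have identical zero sets, so whenever a division is blocked both sides of the identity being compared vanish automatically.
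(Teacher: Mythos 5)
Your proposal is correct and follows essentially the same route as the paper: both arguments reduce (i), (ii), and (iii) to the common normal form $\widehat{L}=\widehat{M}$, $L^{2}=M^{2}$, plus the triple-product identity, using the same specializations $x=y=z$ and $z=x$ and the same zero-case/nonzero-case split, and both obtain the ``moreover'' clause from Proposition \ref{pdc} in one direction and from the radius-one observation together with Proposition \ref{radius} (with $l=1$) in the other. The only cosmetic difference is that you take (iii), via the explicit $3\times 3$ determinant expansion, as the hub of the equivalences, whereas the paper starts from (i) and converts its formula into \ref{*} by multiplying with an auxiliary equality.
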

\begin{proof}
First, observe that plugging $x=y=z$ in the equality in (i), we get that $L\left(x,x\right)^{3}=M\left(x,x\right)^{3}$. Since we also have $\frac{\widehat{M}}{\widehat{L}}>0$, it follows that $\widehat{L}=\widehat{M}$. Plugging $x=z$ gives us $L\left(x,y\right)^{2}L\left(x,x\right)=M\left(x,y\right)^{2}M\left(x,x\right)$, from where $L\left(x,y\right)^{2}=M\left(x,y\right)^{2}$. Now both  \ref{*} and the formula in (i) hold if $L\left(z,x\right)=0$; otherwise the latter is transformed into the former by multiplying with the equality $\frac{M\left(z,x\right)M\left(y,y\right)}{L\left(z,x\right)}=\frac{L\left(z,x\right)L\left(y,y\right)}{M\left(z,x\right)}$. Finally, a simple calculation shows that under the assumption that $\widehat{L}=\widehat{M}$ and $L^{2}=M^{2}$, the formula in (i) is equivalent to $\det_{L}\left(x,y,z\right)=\det_{M}\left(x,y,z\right)$.
\end{proof}

The following example shows that in general we cannot conclude that $L$ and $M$ are $\pm1$ symmetric rescalings when $\det_{L}=\det_{M}$ on small sets. In particular, if $X$ is infinite, in order to decide whether $L$ and $M$ are $\pm1$ symmetric rescalings we have to guarantee $\det_{L}=\det_{M}$ on sets of arbitrary size.

\begin{example}\label{ex} For every $n>2$ consider the $n\times n$ matrices $$L_{n}^{+}=\left[\begin{array}{rrrrrr} 4 & 1 & 0 & \cdots & 0 & 1 \\ 1 & 4 & 1 & \ddots &  & 0 \\ 0 & 1 & \ddots & \ddots & \ddots & \vdots \\ \vdots & \ddots & \ddots & \ddots & 1 & 0 \\ 0 &  & \ddots & 1& 4 & 1 \\ 1 & 0 & \cdots & 0 & 1 & 4 \end{array}\right]\mbox{ and }L_{n}^{-}=\left[\begin{array}{rrrrrr} 4 & 1 & 0 & \cdots & 0 & -1 \\ 1 & 4 & 1 & \ddots &  & 0 \\ 0 & 1 & \ddots & \ddots & \ddots & \vdots \\ \vdots & \ddots & \ddots & \ddots & 1 & 0 \\ 0 &  & \ddots & 1& 4 & 1 \\ -1 & 0 & \cdots & 0 & 1 & 4 \end{array}\right].$$
Applying Lemma \ref{matrix} one can show that all corresponding proper principal minors of these matrices coincide, but $\det L_{n}^{+}\ne \det L_{n}^{-}$, and so $L_{n}^{+}$ and $L_{n}^{-}$ are not $\pm1$ symmetric rescalings. Hence, from Example \ref{ecr}, $L_{n}^{+}$ and $L_{n}^{-}$ are not rescalings at all. In particular, $L_{4}^{+}$ and $L_{4}^{-}$ are not rescalings despite satisfying conditions (i)-(iii) of Corollary \ref{nv}.\medskip

Consider a ``basal'' bi-function $L$ on $X=\N$ defined by $$L\left(m,n\right)=\left\{\begin{array}{lll} 4 & m=n \\ 1  & \left|m-n\right|=1 \\ 0  & \left|m-n\right|>1
 \end{array}\right.$$
The graph $X_{L}$ is the infinite path $1,2,3,...$. By adding extra edges to this graph we will increase its complexity. Namely, for any $A\subset \left\{2,3,4,...\right\}$ define $L_{A}$ by
$$L_{A}\left(m,n\right)=\left\{\begin{array}{lll} L\left(m,n\right)+1 & m+n=\left(m-n\right)^{2}\mbox{ and }\left|m-n\right|\in A \\ L\left(m,n\right)-1 & m+n=\left(m-n\right)^{2}\mbox{ and }\left|m-n\right|\in \N\backslash A \\ L\left(m,n\right) & \mbox{otherwise}
 \end{array}\right.$$
Note that $m+n=\left(m-n\right)^{2}$ means that $m=\frac{\left(m-n\right)^{2}+\left(m-n\right)}{2}$ and $n=\frac{\left(m-n\right)^{2}-\left(m-n\right)}{2}$. Therefore, the graph $X_{L_{A}}$ consists of the infinite path $1,2,3,...$ with additional edges $\left(\frac{k^{2}-k}{2},\frac{k^{2}+k}{2}\right)$, for $k>1$. Hence, for every $k\in A$ we have $\left[L_{A}\left(m,n\right)\right]_{m,n=\frac{k^{2}-k}{2}}^{\frac{k^{2}+k}{2}}=L_{k}^{+}$ and for $k>1$, $k\not\in A$ we have $\left[L_{A}\left(m,n\right)\right]_{m,n=\frac{k^{2}-k}{2}}^{\frac{k^{2}+k}{2}}=L_{k}^{-}$. Thus, $L_{A}$ and $L_{B}$ are $\pm1$ symmetric rescalings, for $A,B\subset \left\{2,3,4,...\right\}$ if and only if $A=B$, and in order to guarantee that we have to check that $\det_{L_{A}}=\det_{L_{B}}$ on sets of arbitrary size.\medskip\qed
\end{example}

Assume that $L$ is a bi-function on $X$, such that $\det_{L}$ does not vanish. Then, if $M$ is a rescaling of $L$, it follows from Proposition \ref{pdc} that $\det_{M}$ does not vanish and $\det_{M}\det_{L}^{-1}$ is a multiplicative function on $Fin\left(X\right)$. In fact, the converse is also true if we assume that $L$ and $M$ are symmetric.

\begin{proposition}\label{pdcr}
Let $L$ and $M$ be symmetric bi-functions on $X$. Assume that $\det_{L}$ does not vanish and $M$ is non-degenerate. Then $M$ is a rescaling of $L$ if and only if $\det_{M}\det_{L}^{-1}$ is a multiplicative function on $Fin\left(X\right)$.
\end{proposition}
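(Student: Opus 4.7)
The forward implication is immediate from Proposition \ref{pdc}: if $(f,g)$ rescales $L$ to $M$, then $\det_{M}(x_{1},\ldots,x_{n}) = \prod_{i=1}^{n} f(x_{i})g(x_{i}) \det_{L}(x_{1},\ldots,x_{n})$, so $\det_{M}\det_{L}^{-1}$ is the function $\{x_{1},\ldots,x_{n}\}\mapsto \prod_{i=1}^{n} f(x_{i})g(x_{i})$ on $Fin(X)$, which is clearly multiplicative.

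For the converse, the plan is to normalize $L$ and $M$ so that their diagonals become identically $1$, and then invoke Theorem \ref{similar}. Since $\det_{L}$ and $\widehat{M}$ do not vanish and $\widehat{L}=\det_{L}\!\upharpoonright_{\text{singletons}}$ does not vanish either, the ratio $h(x):=\widehat{M}(x)/\widehat{L}(x)$ is a non-vanishing function on $X$. Multiplicativity of $\det_{M}\det_{L}^{-1}$ forces $\det_{M}(x_{1},\ldots,x_{n}) = \bigl(\prod_{i=1}^{n} h(x_{i})\bigr)\det_{L}(x_{1},\ldots,x_{n})$ for all distinct $x_{1},\ldots,x_{n}\in X$.

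Next, pick any pointwise square roots $\alpha,\beta:X\to\Cp$ with $\alpha^{2}=\widehat{L}$ and $\beta^{2}=\widehat{M}$ (available since both diagonals are non-vanishing), and set $\tilde L:=\frac{1}{\alpha}\otimes\frac{1}{\alpha}\,L$ and $\tilde M:=\frac{1}{\beta}\otimes\frac{1}{\beta}\,M$. These are symmetric with $\widehat{\tilde L}\equiv \widehat{\tilde M}\equiv 1$. Applying Proposition \ref{pdc} twice,
$$\det\nolimits_{\tilde L}(x_{1},\ldots,x_{n}) = \frac{\det_{L}(x_{1},\ldots,x_{n})}{\prod_{i=1}^{n}\widehat{L}(x_{i})}, \qquad \det\nolimits_{\tilde M}(x_{1},\ldots,x_{n}) = \frac{\det_{M}(x_{1},\ldots,x_{n})}{\prod_{i=1}^{n}\widehat{M}(x_{i})},$$
and the identity $\det_{M}/\det_{L} = \prod h(x_{i}) = \prod \widehat{M}(x_{i})/\widehat{L}(x_{i})$ shows that $\det_{\tilde L} = \det_{\tilde M}$ on all of $Fin(X)$.

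Now Theorem \ref{similar} applies to the symmetric bi-functions $\tilde L$ and $\tilde M$, yielding $\epsilon:X\to\{-1,1\}$ with $\tilde M = \epsilon\otimes\epsilon\,\tilde L$. Translating back through the normalization gives
$$M = \beta\otimes\beta\,\tilde M = \Bigl(\tfrac{\epsilon\beta}{\alpha}\Bigr)\otimes\Bigl(\tfrac{\epsilon\beta}{\alpha}\Bigr)\,L,$$
so in fact $M$ is a symmetric rescaling of $L$, which is a fortiori a rescaling. The only real subtlety is the square root step; it is harmless because we are free to choose branches pointwise, and the square roots only have to exist as $\Cp$-valued functions (no continuity or measurability is required). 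The main conceptual work is the normalization, which is what lets us cash in the already-proven Theorem \ref{similar}.
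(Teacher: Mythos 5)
Your proof is correct and follows essentially the same route as the paper: both directions hinge on Proposition \ref{pdc}, and the converse reduces to Theorem \ref{similar} by taking pointwise square roots of the non-vanishing diagonal data and rescaling. The only difference is cosmetic — the paper rescales $L$ once (by $g$ with $g^{2}=\widehat{M}/\widehat{L}$) so that its minors match $\det_{M}$, whereas you normalize both $L$ and $M$ to unit diagonal before invoking Theorem \ref{similar}; the resulting rescaling functions agree up to sign.
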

\begin{proof}
We only need to prove sufficiency. Assume that $\det_{M}\det_{L}^{-1}$ is a multiplicative function on $Fin\left(X\right)$, and so there is $f:X\to \C$ such that for any $x_{1},...,x_{n}\in X$ we have
$\det_{M}\left(x_{1},...,x_{n}\right)=\prod\limits_{i=1}^{n}f\left(x_{i}\right)\det_{L}\left(x_{1},...,x_{n}\right)$. Since $M$ is non-degenerate, it follows that $f$ does not vanish. Let $g:X\to \Cp$ be such that $g^{2}=f$. Then $K=g\otimes g L$ is a symmetric bi-function such that $\det_{M}=\det_{K}$. From Theorem \ref{similar} there is $h:X\to \left\{-1,1\right\}$ such that $M=h\otimes h K$. Hence, $M=gh\otimes gh L$ is a rescaling of $L$.
\end{proof}

\section{Rescaling on topological spaces}\label{restop}

Until now we studied general bi-functions on sets without any additional structure. In this section we will consider the topological aspect of the topic. Let us start with the following observation: if $L$ and $M$ are separately continuous non-vanishing bi-functions with $L^{2}=M^{2}$, then either $L=M$ or $L=-M$. Indeed, for every $y$ we have $\frac{L\left(\cdot,y\right)}{M\left(\cdot,y\right)}=\pm 1$, and from the continuity we get that either $L\left(\cdot,y\right)=M\left(\cdot,y\right)$, or $L\left(\cdot,y\right)=-M\left(\cdot,y\right)$. Analogously, either $L\left(x,\cdot\right)=M\left(x,\cdot\right)$, or $L\left(x,\cdot\right)=-M\left(x,\cdot\right)$, for every $x\in X$. Combining these assertions we the claim follows.

It is clear that we heavily relied on the assumption that $L$ and $M$ do not vanish, and so the properties of the graph $X_{K}=X_{L}$ come into play. In this section we will investigate the connection between the topology of $X$ and the graph structure of $X_{L}$ when $L$ satisfies certain minimal assumptions of continuity.\medskip

\textbf{A graph of a continuous bi-function. }Let $L$ be a bi-function on a set $X$. For $y\in X$ define $U_{y}=\left\{x\in X\left|L\left(x,y\right)\ne 0\right.\right\}$ and $U^{y}=\left\{x\in X\left|L\left(y,x\right)\ne 0\right.\right\}$. Note that $x\in U_{y}\Leftrightarrow y\in U^{x}$ and $L\left(y,y\right)\ne 0\Leftrightarrow y\in U_{y}\Leftrightarrow y\in U^{y}$. In fact, $U_{y}\cup U^{y}$ is either the neighborhood, or the closed neighborhood of $y$ in $X_{L}$, depending on whether $L\left(y,y\right)= 0$.

Assume that $M$ and $L$ are bi-functions on $X$, and $y\in X$ and $y\in Y\subset U_{y}$ are such that there are $f,g:X\to\Cp$ are such that $f\left(x\right)g\left(z\right)L\left(x,z\right)=M\left(x,z\right)$, for every $x,z\in Y$. Then on $Y$ we have $f=\frac{M\left(\cdot,y\right)}{g\left(y\right) L\left(\cdot,y\right)}$, and so $f$ is determined uniquely on $Y$ up to a constant multiple. Analogously, $g$ is determined uniquely on $Y$ up to a constant multiple in the case when $y\in Y\subset U^{y}$.

Now assume that $X$ is a topological space and $x,y\in X$ are such that $L\left(x,y\right)\ne 0$. If $L$ is continuous in the first variable at $\left(x,y\right)$, then $x\in\Int U_{y}$. Every point of $U_{y}$ is of distance at most $2$ from $x$ in $X_{L}$, and so the closed second neighborhood of $x$ in $X_{L}$ is a (not necessarily open) neighborhood of $x$ in the topological space $X$. In particular, if in this case $x=y$, then $y\in\Int U_{y}$, and so the closed neighborhood of $y$ in $X_{L}$ is also a topological neighborhood. This observation leads to the following result.

\begin{proposition}\label{ocom}
Let $X$ be a topological space and let $L$ be a bi-function on $X$. Then the components of $X_{L}$ are open once $L$ satisfies one of the following properties:
\item[(i)] $L$ is non-degenerate and separately continuous in the first variable at the points of the diagonal;
\item[(i')] $L$ is non-degenerate and separately continuous in the second variable at the points of the diagonal;
\item[(ii)] $L$ is separately continuous in the first variable and for every $x\in X$ there is $y\in X$ such that $L\left(x,y\right)\ne 0$.
\item[(ii')] $L$ is separately continuous in the second variable and for every $x\in X$ there is $y\in X$ such that $L\left(y,x\right)\ne 0$.
\item[(iii)] $L$ is separately continuous and the graph $X_{L}$ has no isolated vertices.
\end{proposition}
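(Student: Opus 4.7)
The plan is to prove, in each of the five cases, that every $x \in X$ has a topological neighborhood contained in the graph-theoretic component of $x$ in $X_{L}$; since graph components partition $X$, this is precisely what openness requires.

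The unifying mechanism is the observation already recorded in the paragraph immediately preceding the proposition. Suppose $L(x,y) \ne 0$ and $L$ is continuous in the first variable at $(x,y)$. Then $L(\cdot,y)$ is continuous at $x$ and nonzero there, hence nonzero on some topological neighborhood $V$ of $x$; thus $V \subseteq U_{y}$, every $z \in V$ is graph-adjacent to $y$, and since $y$ itself is graph-adjacent to $x$, all of $V$ lies in the component of $x$. The mirror statement handles the second-variable case: if $L(y,x)\ne 0$ and $L$ is continuous in the second variable at $(y,x)$, then $V \subseteq U^{y}$ for some topological neighborhood $V$ of $x$, and the same conclusion follows. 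The remainder of the proof just locates, in each case, a suitable witness $y$.

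In case (i), take $y = x$: non-degeneracy supplies $L(x,x) \ne 0$, and continuity of the first variable at the diagonal point $(x,x)$ is exactly what the observation needs; the component of $x$ contains the required neighborhood. Case (i') is completely symmetric using the second variable. In case (ii), the hypothesis directly produces $y$ with $L(x,y) \ne 0$, and separate continuity in the first variable gives continuity of $L(\cdot,y)$ at $x$; case (ii') is symmetric. Finally, case (iii) reduces to (ii) or (ii'): absence of isolated vertices means each $x$ has at least one graph-neighbor $y$, which means $L(x,y)\ne 0$ or $L(y,x)\ne 0$, and because $L$ is separately continuous in both variables we can apply whichever version of the observation is relevant.

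There is no real obstacle; the proof is bookkeeping around the observation above. The only role of the various hypotheses is to forbid the single configuration that breaks the argument, namely an isolated vertex of $X_{L}$ — in (i) and (i') this is forbidden by $L(x,x)\ne 0$, in (ii) and (ii') by the non-vanishing hypothesis, and in (iii) by direct assumption.
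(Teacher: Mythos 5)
Your proof is correct and follows essentially the same route as the paper, which states the proposition as an immediate consequence of the very observation you cite: $L(x,y)\ne 0$ plus continuity in the first variable at $(x,y)$ gives $x\in\Int U_{y}$, and $U_{y}$ lies within distance $2$ of $x$ in $X_{L}$, hence inside the component of $x$. The paper leaves the case-by-case bookkeeping implicit; you have simply written it out, including the correct choice of witness $y=x$ in cases (i) and (i') and the reduction of (iii) to the two one-sided cases.
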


Another consequence of separate continuity of bi-functions is the continuity of the functions that rescale them.

\begin{proposition}\label{con}
Let $X$ be a topological space and let $L$ and $M$ be bi-functions on $X$, which satisfy the same of the conditions (i), (ii) or (iii) of the preceding proposition. If $f,g:X\to\Cp$ are such that $\left(f,g\right)$ rescales $L$ to $M$, then $f$ is continuous.
\end{proposition}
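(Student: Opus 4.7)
The strategy is local: fix $x_{0}\in X$ and exhibit $f$ on a neighborhood of $x_{0}$ as a ratio of functions continuous at $x_{0}$ with non-vanishing denominator. The engine is the identity recorded in the paragraph before Proposition~\ref{ocom}: if $y_{0}\in X$ satisfies $L(x_{0},y_{0})\ne 0$, then separate continuity of $L$ in the first variable produces an open neighborhood $V\ni x_{0}$ on which $L(\cdot,y_{0})$ is non-vanishing, and the rescaling equation $M(x,y_{0})=f(x)g(y_{0})L(x,y_{0})$ can be solved to give
\[
f(x)=\frac{M(x,y_{0})}{g(y_{0})\,L(x,y_{0})},\qquad x\in V.
\]
Since $M$ satisfies the same separate-continuity hypothesis as $L$, $M(\cdot,y_{0})$ is also continuous at $x_{0}$; together with $L(x_{0},y_{0})\ne 0$ this renders $f$ continuous at $x_{0}$ as a ratio of continuous functions with non-zero denominator.

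The three hypotheses are then used only to manufacture such a witness $y_{0}$ for each $x_{0}$. In case (i) the choice $y_{0}=x_{0}$ works: non-degeneracy yields $L(x_{0},x_{0})\ne 0$, and continuity in the first variable at diagonal points gives continuity of $L(\cdot,x_{0})$ and $M(\cdot,x_{0})$ at $x_{0}$. In case (ii), the hypothesis directly supplies a $y_{0}$ with $L(x_{0},y_{0})\ne 0$, and full separate continuity in the first variable takes care of the continuity requirements on $L(\cdot,y_{0})$ and $M(\cdot,y_{0})$.

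Case (iii) is the delicate one and I expect it to be the main obstacle. The no-isolated-vertex condition produces $y_{0}$ with $L(x_{0},y_{0})\ne 0$ \emph{or} $L(y_{0},x_{0})\ne 0$; the first alternative reduces at once to case (ii). In the second alternative, the symmetric manipulation -- using separate continuity in the second variable and the relation $M(y_{0},x)=f(y_{0})g(x)L(y_{0},x)$ -- produces a local formula $g(x)=M(y_{0},x)/(f(y_{0})L(y_{0},x))$ on a neighborhood of $x_{0}$ and hence continuity of $g$, not of $f$, at $x_{0}$. To upgrade this into continuity of $f$ at $x_{0}$ the plan is to use openness of the component $C$ of $x_{0}$ in $X_{L}$ (Proposition~\ref{ocom}) together with the separate continuity of $L$ and $M$ to locate, inside a given neighborhood of $x_{0}$, a point $x_{1}$ for which a first-slot witness is available, and then to piece together the resulting local ratio expressions for $f$ via the rescaling identity $M=f\otimes g\,L$ on $C$; connectedness of $C$ and the uniqueness of rescaling functions up to constants on a connected component (Corollary~\ref{condis}) should make this propagation consistent. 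The careful book-keeping of this ``transfer of continuity from $g$ back to $f$'' is the crux of the argument in case (iii).
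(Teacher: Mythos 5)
Your cases (i) and (ii) are correct and coincide with the paper's own argument: the paper writes out only case (ii), solving the rescaling identity for $f=\lambda M\left(\cdot,y\right)/L\left(\cdot,y\right)$ on the open set $U_{y}\ni x$ and citing continuity of the two slices, and then declares the remaining cases ``similar''; your case (i) is that same computation with the witness $y_{0}=x_{0}$ supplied by non-degeneracy.

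The gap is case (iii), and it is worse than book-keeping: the ``transfer of continuity from $g$ back to $f$'' that you plan cannot be carried out, because in your second alternative the value $f\left(x_{0}\right)$ is not constrained at all. If $L\left(x_{0},\cdot\right)\equiv 0$ while $L\left(\cdot,x_{0}\right)\not\equiv 0$ (so $x_{0}$ is not an isolated vertex of $X_{L}$), then every identity $M\left(x_{0},y\right)=f\left(x_{0}\right)g\left(y\right)L\left(x_{0},y\right)$ reads $0=0$ no matter what $f\left(x_{0}\right)$ is, so no propagation along the component can pin it down; note also that Corollary \ref{condis}(ii), which your plan invokes, requires $L$ to be non-degenerate, and that hypothesis fails at exactly such a point. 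Concretely, take $X=\R$, $L\left(x,y\right)=M\left(x,y\right)=x$, $g\equiv 1$, $f\equiv 1$ off $0$ and $f\left(0\right)=5$: both bi-functions are jointly continuous, $X_{L}$ has no isolated vertices (the vertex $0$ is joined to every $y\ne 0$ because $L\left(y,0\right)=y\ne 0$), and $\left(f,g\right)$ rescales $L$ to $M$, yet $f$ is discontinuous at $0$. So under the literal undirected reading of ``no isolated vertices'' the case (iii) assertion is false; it is true only if the hypothesis is read as providing, for every $x_{0}$, some $y_{0}$ with $L\left(x_{0},y_{0}\right)\ne 0$ --- i.e.\ precisely your first alternative, which reduces case (iii) to case (ii). That is evidently what the paper's ``the other proofs are similar'' presumes. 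Your instinct that the second alternative is the crux is right, but the correct resolution is to exclude it by hypothesis (or to flag the statement), not to bridge it by a connectedness argument.
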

\begin{proof}
We will only provide a proof for the case when both $L$ and $M$ satisfy the condition (ii) of the preceding proposition, since the other proofs are similar. Fix $x\in X$. There is $y\in X$ such that $L\left(x,y\right)\ne 0$, and so $x\in \Int U_{y}$. Recall that on $U_{y}$ we have $f=\frac{\lambda M\left(\cdot,y\right)}{L\left(\cdot,y\right)}$, for some $\lambda$ that depends on $y$. Since both $M\left(\cdot,y\right)$ and $L\left(\cdot,y\right)$ are continuous, it follows that $f$ is continuous at $x$. Since $x$ was chosen arbitrarily we conclude that $f$ is continuous on $X$.
\end{proof}
\begin{remark}
If in the notations of the proposition $L$ and $M$ satisfy the same of the conditions (i'), (ii') or (iii) of Proposition \ref{ocom}, then $g$ is continuous.\qed
\end{remark}

We can now show that if two bi-functions are ``almost rescalings'', then they are in fact rescalings.

\begin{proposition}\label{conde}
Let $X$ be a topological space and let $L$ and $M$ be separately continuous non-degenerate bi-functions on $X$. Assume that $L$ and $M$ are rescalings on a dense set $Y\subset X$, and also for any $x\in X$ there is a neighborhood $U$ of $x$ such that $L$ and $M$ are rescalings on $U$. Then $L$ and $M$ are rescalings on $X$.
\end{proposition}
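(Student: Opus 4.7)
The plan is to extend the rescaling $(f_Y,g_Y)$ given on $Y$ continuously to all of $X$, using the locally available rescalings as a scaffold, and to verify the extension is a rescaling by a double-limit argument.

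First, the local setup. For each $x\in X$, non-degeneracy of $L$ together with separate continuity gives an open neighborhood of $x$ on which both $L(\cdot,x)$ and $L(x,\cdot)$ are non-vanishing; intersecting with the neighborhood guaranteed by hypothesis, we obtain an open neighborhood $V_x$ of $x$ on which additionally $L$ and $M$ are rescalings. Since every vertex of $V_x$ is joined to $x$, the graph $(V_x)_L$ is connected, so by Corollary \ref{components0}(ii) any rescaling $(f_x,g_x):V_x\to\Cp$ is determined up to a pair $(\lambda f_x,\lambda^{-1}g_x)$, and by Proposition \ref{con} it is continuous on $V_x$.

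The crucial step is comparing $(f_Y,g_Y)$ with $(f_x,g_x)$ near $x$. I claim there is an open neighborhood $W\subset V_x$ of $x$ and a constant $\lambda_x\in\Cp$ with $f_Y=\lambda_x f_x$ (equivalently $g_Y=\lambda_x^{-1}g_x$) on $W\cap Y$. Indeed, fix any open $W$ with $x\in W\subset V_x$, and take $y_1,y_2\in W\cap Y$. Since $L(y_j,x)\ne 0$, separate continuity of $L(y_j,\cdot)$ produces an open neighborhood $N\subset W$ of $x$ on which both $L(y_1,\cdot)$ and $L(y_2,\cdot)$ are non-vanishing; density of $Y$ yields $y_3\in N\cap Y$. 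Then $(y_1,y_3)$ and $(y_2,y_3)$ are edges of $(V_x\cap Y)_L$, so $y_1$ and $y_2$ lie in the same graph-component of $(V_x\cap Y)_L$. On that component, both $(f_Y,g_Y)$ and $(f_x,g_x)$ rescale $L$ to $M$, so Corollary \ref{components0}(ii) forces them to differ by a single pair $(\lambda,\lambda^{-1})$; hence $f_Y/f_x$ takes the same value $\lambda_x$ at $y_1$ and $y_2$.

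Now set $\tilde f(x):=\lambda_x f_x(x)$ and $\tilde g(x):=\lambda_x^{-1}g_x(x)$ (for $x\in Y$ this recovers $f_Y(x)$ and $g_Y(x)$). To verify continuity of $\tilde f$ at $x$, observe that if $z\in W$ carries its own data $(W_z,f_z,\lambda_z)$, the continuous functions $\lambda_x f_x$ and $\lambda_z f_z$ agree with $f_Y$ on $W\cap W_z\cap Y$, which is dense in $W\cap W_z$, hence they coincide on $W\cap W_z$; thus $\tilde f=\lambda_x f_x$ throughout $W$, so $\tilde f$ is continuous and non-vanishing on $X$, and the same holds for $\tilde g$. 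To see that $(\tilde f,\tilde g)$ rescales $L$ to $M$ on $X$, pick $x,z\in X$ and nets $x_\alpha\to x$, $z_\beta\to z$ in $Y$; then
\[
M(x_\alpha,z_\beta)=f_Y(x_\alpha)g_Y(z_\beta)L(x_\alpha,z_\beta)=\tilde f(x_\alpha)\tilde g(z_\beta)L(x_\alpha,z_\beta).
\]
Letting $\beta\to\infty$ and using separate continuity of $L,M$ in the second variable together with continuity of $\tilde g$ yields $M(x_\alpha,z)=\tilde f(x_\alpha)\tilde g(z)L(x_\alpha,z)$; a second limit in $\alpha$ completes the identity.

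The main obstacle is the graph-theoretic claim in the second paragraph: the graph $(V_x\cap Y)_L$ can easily be disconnected, so a priori $f_Y/f_x$ might take different values on different components of $W\cap Y$. Density of $Y$ is precisely what produces the connecting vertex $y_3$ near $x$ and collapses everything in $W\cap Y$ into a single graph-component, making the constant $\lambda_x$ well-defined. Once this is established, the gluing, continuity, and the double limit are routine.
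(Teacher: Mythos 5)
Your proof is correct and takes essentially the same route as the paper's: compare the given rescaling on $Y$ with a local rescaling near each point (density of $Y$ plus separate continuity makes the nearby points of $Y$ graph-connected, so Corollary \ref{components0}(ii) pins down the comparison up to a constant), glue the resulting local extensions using continuity on dense overlaps, and extend the rescaling identity to all of $X$ one variable at a time via separate continuity. The only differences are bookkeeping: you carry explicit constants $\lambda_x$ and connect pairs $y_1,y_2$ through a hub $y_3$ near $x$, whereas the paper normalizes $f_x$ at an anchor point $y\in Y\cap V_x\cap U$, so that its neighborhood $W_x\cap Y$ is star-shaped around $y$ and the constant is $1$.
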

\begin{proof}
First, note that we do not have $X_{L}=X_{M}$ yet, and so $U_{x}$ and $U^{x}$, for $x\in X$, might be different with respect to $L$ and $M$. However, since these bi-functions are separately continuous and non-degenerates, these sets are open neighborhoods of $x$. Let $V_{x}$ be the intersection of all these four sets ($U_{x}$ and $U^{x}$ with respect to $L$, and $U_{x}$ and $U^{x}$ with respect to $M$), which is an open neighborhood of $x$.\medskip

Let $f,g:X\to\Cp$ be such that $\left(f,g\right)$ rescales $L$ to $M$ on $Y$. Let $x\in X$ and let $U$ be an open neighborhood of $x$ such that $L$ and $M$ are rescalings on $U$. Then $V_{x}\cap U$ is an open neighborhood of $x$, and since $Y$ is dense, there is $y\in Y\cap V_{x}\cap U$. Since $y\in V_{x}$ it follows that $x\in V_{y}$, and so $W_{x}=V_{x}\cap V_{y}\cap U$ is an open neighborhood of both $x$ and $y$. Let $f_{x},g_{x}:U\to\Cp$ be such that $\left(f_{x},g_{x}\right)$ rescales $L$ to $M$ on $U$ with $f_{x}\left(y\right)=f\left(y\right)$. Then both $\left(f,g\right)$ and $\left(f_{x},g_{x}\right)$ rescale $L$ to $M$ on $W_{x}\cap Y$ with $f_{x}\left(y\right)=f\left(y\right)$, and $y\in W_{x}\cap Y\subset V_{y}$. Hence, $\left.f_{x}\right|_{W_{x}\cap Y}=\left.f\right|_{W_{x}\cap Y}$, and $\left.g_{x}\right|_{W_{x}\cap Y}=\left.g\right|_{W_{x}\cap Y}$.\medskip

Let $x,z\in X$ and let $W_{x}$ and $W_{z}$ be as constructed above. Then $\left.f_{x}\right|_{W_{x}\cap Y\cap W_{z}}=\left.f\right|_{W_{x}\cap Y\cap W_{z}}=\left.f_{z}\right|_{W_{x}\cap Y\cap W_{z}}$, and both $f_{x}$ and $f_{z}$ are continuous due to Proposition \ref{con}. Since $W_{x}\cap Y\cap W_{z}$ is dense in $W_{x}\cap W_{z}$, we conclude that $\left.f_{x}\right|_{W_{x}\cap W_{z}}=\left.f_{z}\right|_{W_{x}\cap W_{z}}$.

Thus, the extensions of $f$ and $g$ on $X$ given by $f\left(x\right)=f_{x}\left(x\right)$ and $g\left(x\right)=g_{x}\left(x\right)$, for $x\in X$, are well defined and continuous. It is left to show that $\left(f,g\right)$ rescales $L$ to $M$ on $X$. We know that $f\left(x\right)g\left(y\right)L\left(x,y\right)=M\left(x,y\right)$, whenever $x,y\in Y$. Fix $y\in Y$. Since $L\left(\cdot,y\right)$, $M\left(\cdot,y\right)$ and $f$ are continuous, and $Y$ is dense, it follows that $f\left(x\right)g\left(y\right)L\left(x,y\right)=M\left(x,y\right)$, for every $x\in X$. Applying the same argument to the second variable, we conclude that $\left(f,g\right)$ rescales $L$ to $M$ on $X$.
\end{proof}
\begin{remark}
If in the statement of the proposition $L$ and $M$ were symmetric / Hermitean/ reciprocal rescalings on $Y$, then they are symmetric / Hermitean/ reciprocal rescalings on $X$.
\qed\end{remark}

\textbf{Rescaling and compactness. }Consider how compactness of $X$ is reflected on the properties of $X_{L}$.

\begin{proposition}
Let $L$ be a bi-function on a compact Hausdorff space $X$ which satisfies one of the conditions of Proposition \ref{ocom}. Then $X_{L}$ has finite number of components and every component has finite radius.
\end{proposition}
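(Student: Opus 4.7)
The plan is twofold: first show each component of $X_L$ is clopen and hence, by compactness of $X$ and pairwise disjointness, that there are only finitely many; then for each component produce an increasing sequence of closed graph-neighborhoods whose topological interiors cover it, and use compactness to conclude the sequence stabilizes in finitely many steps.

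For the first part, I would invoke Proposition \ref{ocom} to get that the components of $X_L$ are open. Since they also partition $X$, each component is the complement of a union of other (open) components, and so is also closed, hence clopen. The components then form an open cover of the compact space $X$ by pairwise disjoint nonempty sets; a finite subcover must include every component, so there are only finitely many.

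For the radius claim, fix one component $Y$, which is clopen in $X$ hence compact, and pick $z\in Y$. Let $B_n$ denote the closed $n$-th neighborhood of $z$ in $Y_L$. Since every vertex of the connected graph $Y_L$ is at finite distance from $z$, we have $Y=\bigcup_{n\ge 0} B_n$. The crucial local fact, drawn verbatim from the paragraph preceding Proposition \ref{ocom}, is that under any of the hypotheses (i)--(iii) of that proposition, every $x\in X$ admits an open topological neighborhood $V_x$ contained in its closed $k$-th neighborhood in $X_L$: under (i) or (i') we have $L(x,x)\ne 0$ and separate continuity at $(x,x)$, forcing $V_x\subset U_x$, so $k=1$; under (ii), (ii'), or (iii) we pick $y$ with $L(x,y)\ne 0$ (or $L(y,x)\ne 0$) and separate continuity places $V_x$ inside $U_y$ (or $U^y$), so $k=2$. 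In either case, if $x\in B_n$ then $V_x\subset B_{n+k}$, so $x\in\Int B_{n+k}$.

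Consequently $Y=\bigcup_{n\ge 0}\Int B_n$, an open cover of the compact set $Y$ by a nested sequence. Compactness yields $\Int B_N=Y$ for some $N$, so $B_N=Y$ and the radius of $Y_L$ is at most $N$. The only real content is the local observation that closed graph-neighborhoods absorb topological neighborhoods of their points; the rest is a standard compactness reduction, and the only bookkeeping burden is checking the constant $k\in\{1,2\}$ across the five alternative hypotheses of Proposition \ref{ocom}.
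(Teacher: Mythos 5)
Your proof is correct and takes essentially the same route as the paper's: openness of the components plus compactness gives finiteness, and the key local fact that every point lies in the topological interior of its closed first or second graph-neighborhood (with $k\in\{1,2\}$ depending on the hypothesis of Proposition \ref{ocom}) produces a nested open cover by interiors of closed graph-neighborhoods, which stabilizes by compactness and bounds the radius. The only cosmetic difference is that you restrict to a clopen component and work there, whereas the paper assumes $X_{L}$ connected without loss of generality and writes the absorption property as $V_{x}^{n}\subset \Int V_{x}^{n+2}$.
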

\begin{proof}
Since from Proposition \ref{ocom} the components of $X_{L}$ are open disjoint sets which cover $X$, it follows that there are only finitely many of them. Let us prove the second claim. Without loss of generality we may assume that $X_{L}$ is connected.

For $x\in X$ and $n\in\N$ let $V_{x}^{n}$ be the closed $n$-th neighborhood of $x$ in $X_{L}$. Clearly, $x\in V_{x}^{n}\subset V_{x}^{n+1}$, and if $y\in V_{x}^{n}$, then $V_{y}^{m}\subset V_{x}^{m+n}$, for any $m\in\N$.

Recall that if either of the conditions of Proposition \ref{ocom} hold, then $y\in \Int V_{y}^{2}$, for any $y$, and so $V_{x}^{n}\subset \Int V_{x}^{n+2}$. Therefore, $\bigcup\limits_{n\in\N}\Int V_{x}^{n}=\bigcup\limits_{n\in\N} V_{x}^{n}=X$, where the last equality follows from the fact that $X_{L}$ is connected. Hence, $\left\{\Int V_{x}^{n}\right\}_{n\in\N}$ is an increasing sequence of open sets, that cover $X$. Since $X$ is compact we conclude that there is $n\in\N$ such that $X=V_{x}^{n}$, and so the radius of $X_{L}$ is at most $n$.
\end{proof}

Combining this proposition with Proposition \ref{radius} we obtain the following fact.

\begin{corollary}\label{comp}
Let $L$ be a bi-function on a compact Hausdorff space $X$ which satisfies one of the conditions of Proposition \ref{ocom}. Then there is $l\in\N$ such that any symmetric bi-function $M$ on $X$ is a $\pm1$ symmetric rescaling of $L$ whenever $\det_{L}=\det_{M}$ on all of the subsets of $X$ of cardinality up to $l$.
\end{corollary}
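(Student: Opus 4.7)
The plan is essentially a direct combination of the two preceding results: the compactness proposition that immediately precedes the statement, and Proposition \ref{radius} from Section \ref{minors}.

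First, I would invoke the preceding proposition to extract the geometric control on the graph $X_{L}$. Since $L$ satisfies one of the conditions (i)--(iii) of Proposition \ref{ocom} and $X$ is compact Hausdorff, the preceding proposition guarantees that $X_{L}$ has only finitely many components $Y_{1},\ldots,Y_{k}$, and that each component $Y_{j}$ has some finite radius $r_{j}\in\N$. Setting $r=\max\{r_{1},\ldots,r_{k}\}$ — which exists precisely because the number of components is finite — the radius of every component of $X_{L}$ is bounded by the single integer $r$.

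Next, I would set $l=2r+1$ and appeal to Proposition \ref{radius}. That proposition says that whenever the radius of each component of $X_{L}$ is at most $r$, any symmetric bi-function $M$ for which $\det_{L}=\det_{M}$ holds on all finite subsets of $X$ of cardinality at most $2r+1=l$ must in fact be a $\pm 1$ symmetric rescaling of $L$. This is exactly the conclusion of the corollary, so the proof is complete.

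The only subtlety worth noting is that Proposition \ref{radius} requires $L$ to be symmetric; this should be read as an implicit hypothesis here (otherwise the notion ``$\pm 1$ symmetric rescaling of $L$'' in the conclusion is not meaningful in the sense used earlier). There is no real obstacle to the argument: the work has already been done in the two cited results, and the role of compactness is exclusively to upgrade the ``finite radius on each component'' condition from a statement about each individual component to a uniform bound over all of $X$, which is needed so that a single $l$ works.
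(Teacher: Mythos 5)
Your proof is correct and is exactly the paper's argument: the paper derives this corollary by combining the preceding compactness proposition (finitely many components, each of finite radius, hence a uniform radius bound $r$) with Proposition \ref{radius} applied with $l=2r+1$. Your remark that symmetry of $L$ must be read as an implicit hypothesis is a fair observation about the statement itself and does not affect the agreement with the paper's reasoning.
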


\begin{proposition}
Let $L$ be a continuous non-degenerate bi-function on a compact Hausdorff space $X$. Then there is $l\in\N$ such that $X_{L}$ does not contain induced cycles of length exceeding $l$.
\end{proposition}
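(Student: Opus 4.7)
The plan is to leverage joint continuity of $L$ together with non-degeneracy to produce, at each point of $X$, an open neighborhood that induces a complete subgraph of $X_{L}$, and then to use the finite subcover coming from compactness to bound the length of any induced cycle by a pigeonhole-type argument.

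First, I would fix $x\in X$: since $L(x,x)\ne 0$ by non-degeneracy and $L$ is continuous at $(x,x)$, the open set $\{(y,z)\in X\times X:L(y,z)\ne 0\}$ contains $(x,x)$, and the definition of the product topology then furnishes an open neighborhood $V_{x}$ of $x$ with $V_{x}\times V_{x}$ contained inside it. Equivalently, $L(y,z)\ne 0$ for all $y,z\in V_{x}$, so $V_{x}$ induces a complete subgraph of $X_{L}$.

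Next, compactness of $X$ delivers a finite subcover $V_{x_{1}},\ldots,V_{x_{N}}$, and I claim that $l:=\max(3,2N)$ is a valid choice. Given any induced cycle $y_{0},y_{1},\ldots,y_{n-1},y_{n}=y_{0}$ in $X_{L}$, choose for each $i$ some $k(i)\in\{1,\ldots,N\}$ with $y_{i}\in V_{x_{k(i)}}$. If two distinct cycle vertices $y_{i}$ and $y_{j}$ share one of these neighborhoods, then $L(y_{i},y_{j})\ne 0$, so $(y_{i},y_{j})$ is an edge of $X_{L}$; since the cycle is induced, this forces $i$ and $j$ to be consecutive in the cycle. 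For $n\ge 4$ any three indices in the $n$-cycle include a non-consecutive pair, so each fiber of $k$ has at most two elements and $n\le 2N\le l$; the case $n=3$ is covered by $l\ge 3$.

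I do not anticipate a serious obstacle. The two points that warrant care are (i) using \emph{joint} rather than separate continuity to extract the ``square'' neighborhood $V_{x}\times V_{x}$ -- the argument genuinely fails for bi-functions that are only separately continuous -- and (ii) handling the degenerate case $N=1$, in which $X_{L}$ is the complete graph on $X$ and triangles are the longest induced cycles, which is why the bound is $\max(3,2N)$ rather than $2N$.
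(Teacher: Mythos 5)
Your proof is correct and follows essentially the same route as the paper: joint continuity plus non-degeneracy yields, around each point, a neighborhood $V_{x}$ inducing a complete subgraph of $X_{L}$, and a finite subcover plus a pigeonhole argument on an induced cycle gives the bound. The only difference is cosmetic -- you bound each fiber by $2$ (for cycles of length at least $4$) and treat triangles separately, obtaining $l=\max(3,2N)$, whereas the paper bounds the number of cycle vertices per neighborhood by $3$ and settles for $l=3N$.
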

\begin{proof}
Since $L$ is continuous and non-degenerate, for every $x$ there is an open neighborhood $V^{x}$ of $x$ such that $L\left(y,z\right)\ne 0$, for every $y,z\in V^{x}$. Then $V^{x}_{L}$ is a complete subgraph of $X_{L}$. Since $X$ is compact there are $x_{1},...,x_{n}\in X$ such that $X=\bigcup\limits_{i=1}^{n}V^{x_{i}}$. Assume that $y_{0},y_{1},...,y_{m}=y_{0}$ is an induced cycle in $X_{L}$. Let $i\in \overline{0,m-1}$ and let $y_{i}\in V_{x_{k}}$, for some $k\in \overline{1,n}$. Then $y_{i}$ is not joined by an edge with $y_{j}$, unless $j=i\pm 1 \mod n$, and so the cardinality of the set $\left\{j\in \overline{0,m-1}\left|y_{j}\in V^{x_{k}}\right.\right\}$ is at most $3$. Hence, $m\le 3n$, and so the lengths of the induced cycles in $X^{L}$ does not exceed $l=3n$.
\end{proof}

The following example shows that continuity cannot be replaced by separate continuity (even together with continuity on the diagonal) in the statement of the preceding proposition.

\begin{example}
Let $X=\left[0,1\right]$. Let $Y=X\times X\backslash \left\{\left(0,0\right),\left(1,1\right)\right\}$. Consider \linebreak $Z=\left\{\left(x,y\right)\in Y\left|y\le x^{2},~ x^{2}+\left(1-y\right)^{2}\le 1\right.\right\}\cup\left\{\left(x,y\right)\in Y\left|x\le y^{2},~ y^{2}+\left(1-x\right)^{2}\le 1\right.\right\}$, which is a closed subset of $Y$. It is not difficult to construct a continuous function $L:Y\to \left[0,1\right]$ such that $L^{-1}\left(0\right)=Z$, $L\left(x,y\right)=L\left(y,x\right)$, for any $\left(x,y\right)\in Y$ and $$L\left(x,0\right)=L\left(x,1\right)=L\left(0,x\right)=L\left(1,x\right)=L\left(x,x\right)=L\left(0,1\right)=L\left(1,0\right)=1,$$ for every $x\in \left(0,1\right)$. Extending $L$ to be defined on $X\times X$ by $L\left(0,0\right)=L\left(1,1\right)=1$ we obtain a separately continuous and continuous on the diagonal non-degenerate bi-function on $\left[0,1\right]$. We will show however that $X_{L}$ contains induced cycles of arbitrary length.

Fix $x\in \left(0,1\right)$ and $y\in \left(x^{2},x\right)$. Consider a sequence $x, y, x^{2}, y^{2}, x^{4}, y^{4},..., z$ where $z$ is the first element of the sequence such that $x^{2}+\left(1-z\right)^{2}> 1$. It is easy to see that we have obtained an induced cycle in $X_{L}$, of length approximately equal to $2t$, where $x^{2^{t}}=1-\sqrt{1-x^{2}}$. Then $2^{t}=\frac{\ln \left(1-\sqrt{1-x^{2}}\right)}{\ln x}$, and since $$\lim\limits_{x\to 1-}\frac{\ln \left(1-\sqrt{1-x^{2}}\right)}{\ln x}=\lim\limits_{x\to 1-}\frac{\frac{\frac{x}{\sqrt{1-x^{2}}}}{1-\sqrt{1-x^{2}}}}{\frac{1}{x}}=\lim\limits_{x\to 1-}\frac{1}{\sqrt{1-x^{2}}}=+\8,$$
we conclude that the length of the considered cycle grows infinitely, as we choose $x$ closer to $1$.
\qed\end{example}

\textbf{Rescaling and connectedness. }Let us bring connectedness into the picture.

\begin{proposition}\label{simeq}
Let $L$ be a bi-function on a connected topological space $X$ which satisfies one of the conditions of Proposition \ref{ocom}. Then:
\item[(i)] $X_{L}$ is connected.
\item[(ii)] If $M$ is a rescaling of $L$, then for every $x\in X$ there are unique $f,g:X\to\Cp$ such that $\left(f,g\right)$ rescales $L$ to $M$ with $f\left(x\right)=1$.
\end{proposition}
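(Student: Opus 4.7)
The plan is to use Proposition \ref{ocom} to translate the topological connectedness of $X$ into graph-theoretic connectedness of $X_L$, and then to invoke the uniqueness machinery developed earlier (Corollary \ref{components0}(ii), Corollary \ref{condis}(ii)), supplementing it with the continuity provided by Proposition \ref{con} in the cases where non-degeneracy of $L$ is not assumed.

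For (i), I would recall that the components of any graph partition its vertex set, and that Proposition \ref{ocom} asserts, under any of its five conditions, that the components of $X_L$ are open in the topological space $X$. Thus $X$ is partitioned into open subsets (the components of $X_L$), and the connectedness of $X$ forces there to be exactly one non-empty such subset; that is, $X_L$ is connected.

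For (ii), existence of some $(f,g)$ rescaling $L$ to $M$ is given, and the observation from Section \ref{bf} that $(\lambda f,\lambda^{-1}g)$ is again a rescaling pair for any $\lambda\in\Cp$ allows me to rescale so that $f(x)=1$. For uniqueness, I would suppose $(f_1,g_1)$ and $(f_2,g_2)$ both rescale $L$ to $M$ with $f_1(x)=f_2(x)=1$ and set $\phi=f_1/f_2$, $\psi=g_1/g_2$. Then $(\phi,\psi)$ rescales $L$ to itself, so $\phi(u)\psi(v)=1$ whenever $L(u,v)\ne 0$, and $\phi(x)=1$. Under conditions (i) or (i') of Proposition \ref{ocom}, $L$ is non-degenerate, so Corollary \ref{components0}(ii) combined with part (i) of the present proposition gives $\phi$ and $\psi$ constant on $X$; evaluating at $x$ forces $\phi\equiv 1$ and hence $\psi\equiv 1$.

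The main obstacle is the uniqueness under conditions (ii), (ii') or (iii) of Proposition \ref{ocom}, where $L$ may fail to be non-degenerate so that Corollary \ref{components0}(ii) is not directly applicable. Here I would use Proposition \ref{con}, which asserts continuity of $\phi$ (or $\psi$). Fixing $v$, the relation $\phi(u)=1/\psi(v)$ holds for all $u$ in the set $U_v=\{u:L(u,v)\ne 0\}$, so $\phi$ is constant on each $U_v$. Under condition (ii), the family $\{U_v\}_{v\in X}$ is an open cover of $X$, so $\phi$ is locally constant; since $X$ is connected, $\phi$ is globally constant and equal to $\phi(x)=1$. The equality $\psi\equiv 1$ then follows by a symmetric argument using the $U^v$'s and the continuity of $\psi$; conditions (ii') and (iii) are handled by the symmetric and combined versions of this reasoning.
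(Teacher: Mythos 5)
Your part (i), your normalization argument for existence, and your treatment of uniqueness under conditions (i)/(i') of Proposition \ref{ocom} are all correct and match the paper: the paper proves (i) exactly as you do, and settles (ii) in one line by citing part (ii) of Corollary \ref{condis}. You were right to notice that this corollary assumes $L$ non-degenerate, so the paper's citation only covers the cases (i)/(i') of Proposition \ref{ocom}; this is a sharper reading than the paper's own proof, which invokes the corollary without comment.

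However, your attempted repair for conditions (ii), (ii'), (iii) has a genuine gap at the last step, and it cannot be closed, because uniqueness actually fails there. Under condition (ii) your argument for $\phi$ is sound (the sets $U_v$ are open, cover $X$, and $\phi$ is constant on each, so $\phi\equiv 1$), but the promised ``symmetric argument'' for $\psi$ does not exist: condition (ii) gives continuity of $L$ only in the first variable, so the sets $U^{u}$ need not be open; it guarantees that every row of $L$ is non-vanishing but says nothing about columns, so the sets $U^{u}$ need not cover $X$; and Proposition \ref{con} together with its Remark yields continuity of $g$ only under (i'), (ii') or (iii), never under (ii). Concretely, take $X=[0,1]$, $h(v)=\max(0,1-2v)$, and $L=M$ defined by $L(u,v)=h(v)$. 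Each $L(\cdot,v)$ is constant, hence continuous; every row is non-vanishing since $L(u,0)=1$; and $X_{L}$ is connected, so condition (ii) of Proposition \ref{ocom} holds. Yet the pair $(f,g)$ with $f\equiv 1$ and $g$ equal to $1$ on $[0,1/2)$ but completely arbitrary (not even continuous) on $[1/2,1]$ rescales $L$ to $M$ with $f(x)=1$: whenever $L(u,v)\ne 0$ one has $v<1/2$, so the values of $g$ on $[1/2,1]$ are never tested. Thus $g$ is not unique. (Under condition (iii) even $f$ can fail to be unique: $L(u,v)=u$ on $[0,1]$ satisfies (iii) but leaves $f(0)$ free.) The honest conclusion is the one your first case already establishes: part (ii) is a theorem under the non-degeneracy hypotheses (i)/(i') — exactly the hypothesis of Corollary \ref{condis}(ii) — and both your sketch and the paper's own proof are defective as claims about all five conditions of Proposition \ref{ocom}.
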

\begin{proof}
(i): From Proposition \ref{ocom} the components of $X_{L}$ are open disjoint sets, which cover $X$. Since $X$ is connected it follows that there is just one of them.

(ii) follows from (i) and part (ii) of Corollary \ref{condis}.
\end{proof}
\begin{remark}
Part (ii) can be extended by adding other claims from Corollary \ref{condis}.\qed
\end{remark}

\begin{proposition}\label{conr}Let $L$ and $M$ be bi-functions on a connected space $X$. Then:
\item[(i)] If $L$ and $M$ are non-degenerate separately continuous rescalings, and there is a discrete subgroup $\Gamma$ of $\Cp$ such that for every $x,y\in X$ there is $\gamma\in\Gamma$ so that $M\left(x,y\right)=\gamma L\left(x,y\right)$. Then $M=\gamma L$, for some $\gamma\in\Gamma$.
\item[(ii)] If $L$ and $M$ satisfy the same of the conditions of Proposition \ref{ocom}, and are symmetric $\Gamma$-rescalings for a discrete subgroup $\Gamma$ of $\Cp$, then $M=\gamma L$, for some $\gamma\in\Gamma$. In particular, if $L$ and $M$ are symmetric with $\det_{L}=\det_{M}$, then $L=M$.
\end{proposition}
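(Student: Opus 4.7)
My plan is to reduce both parts to the principle that a continuous function on a connected space with values in a discrete set must be constant; the earlier Propositions \ref{cresc}, \ref{ocom}, \ref{con}, and \ref{simeq} supply all the needed machinery.

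For part (i), observe that $L$ and $M$ are non-degenerate and separately continuous in both variables, so they satisfy both conditions (i) and (i') of Proposition \ref{ocom}. Hence by Proposition \ref{simeq}(i), $X_L$ is connected. I then apply Proposition \ref{cresc}(i): the pointwise $\Gamma$-valuedness of $M/L$ assumed here is exactly its hypothesis, so there exist $f,g \colon X \to \Gamma$ with $(f,g)$ rescaling $L$ to $M$. Proposition \ref{con} together with the remark following it (which covers continuity of $g$ under (i')) shows that both $f$ and $g$ are continuous. Discreteness of $\Gamma$ and connectedness of $X$ then force $f \equiv \alpha$ and $g \equiv \beta$ for some $\alpha,\beta \in \Gamma$, so $M = \alpha\beta L$ with $\alpha\beta \in \Gamma$.

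For part (ii), the symmetric $\Gamma$-rescaling hypothesis directly supplies $f \colon X \to \Gamma$ with $M = f \otimes f\, L$. Under whichever of the conditions (i), (i'), (ii), (ii'), (iii) of Proposition \ref{ocom} is common to $L$ and $M$, continuity of $f$ follows from Proposition \ref{con} or its remark, and the same discrete-target-plus-connected-domain argument forces $f \equiv \alpha$ for some $\alpha \in \Gamma$, so $M = \alpha^{2} L$ with $\alpha^{2} \in \Gamma$.

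For the ``in particular'' conclusion, Theorem \ref{similar} shows that symmetric $L, M$ with $\det_L = \det_M$ are $\pm 1$ symmetric rescalings, so (ii) applies with $\Gamma = \{-1,1\}$. The crucial final observation is that $\alpha^{2} = 1$ for every $\alpha \in \{-1,1\}$, so the resulting constant is forced to be $+1$, giving $M = L$ rather than merely $M = \pm L$. I do not foresee a substantial obstacle; the only delicate point is matching which of the conditions of Proposition \ref{ocom} is being invoked to conclude continuity of $f$ versus $g$ when they are distinct, so that the discrete-to-constant reduction applies to both scaling factors in part (i).
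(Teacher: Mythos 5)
Your proof is correct and follows essentially the same route as the paper's: Proposition \ref{cresc}(i) (respectively, the symmetric $\Gamma$-rescaling hypothesis) supplies $\Gamma$-valued rescaling functions, Proposition \ref{con} and its accompanying remark give their continuity, and connectedness of $X$ together with discreteness of $\Gamma$ forces them to be constant, the detour through Proposition \ref{simeq}(i) being harmless but unnecessary. Your explicit observation that the constant arising in part (ii) is a square, so that $\Gamma=\left\{-1,1\right\}$ yields $M=L$ rather than merely $M=\pm L$, is precisely the point the paper leaves implicit in asserting that the last claim ``follows from the first claim and Theorem \ref{similar}.''
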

\begin{proof}
(i): From part (i) of Proposition \ref{cresc} there are $f,g:X\to \Gamma$ such that $\left(f,g\right)$ rescales $L$ to $M$. Then $f$ and $g$ are continuous, according to Proposition \ref{con}. Since a continuous function on a connected space with values in a discrete set has to be constant, the result follows.

(ii): The first claim is proven analogously to (i). The last claim follows from the first claim and Theorem \ref{similar}.
\end{proof}

\begin{remark}\label{conrr}In particular, it follows that if $L$ and $M$ are non-degenerate rescalings with $L^{2}=M^{2}$, then either $L=M$, or $L=-M$.\qed\end{remark}

Combining part (ii) with Corollary \ref{comp} we get a stronger version of the latter.

\begin{corollary}
Let $L$ be a symmetric bi-function on a connected compact Hausdorff space $X$ which satisfies one of the conditions of Proposition \ref{ocom}. Then there is $l\in\N$ such that any symmetric bi-function $M$ on $X$ which satisfies the same of the conditions of Proposition \ref{ocom} is equal to $L$ whenever $\det_{L}=\det_{M}$ on all of the subsets of $X$ of cardinality up to $l$.
\end{corollary}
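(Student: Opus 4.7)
The plan is to pipeline Corollary \ref{comp} into the continuity-on-a-connected-space argument already used in the proof of Proposition \ref{conr}. First I would apply Corollary \ref{comp} to $L$ on the compact Hausdorff space $X$: this produces an integer $l \in \N$ with the property that every symmetric bi-function $M$ on $X$ satisfying $\det_{L} = \det_{M}$ on all subsets of cardinality at most $l$ is a $\pm 1$ symmetric rescaling of $L$. In particular, there is a function $f : X \to \{-1, 1\}$ with $M = f \otimes f L$.

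Next, using the hypothesis that $M$ satisfies the same condition of Proposition \ref{ocom} as $L$, I would invoke Proposition \ref{con} to conclude that $f$ is continuous. Since $X$ is connected and $\{-1,1\}$ is a discrete subset of $\Cp$, the continuous function $f$ must be constant, say $f \equiv c \in \{-1,1\}$. Then $f \otimes f \equiv c^{2} = 1$, so $M = L$ as claimed.

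I do not expect a substantive obstacle: the combinatorial heavy lifting has already been done in Proposition \ref{radius} (and Corollary \ref{comp}), and the topological step is a standard discrete-valued-continuous-on-connected-space argument. One point worth highlighting is that no separate effort is required to rule out $f \equiv -1$: because the rescaling in question is \emph{symmetric}, the sign function enters only through $f \otimes f$, which is identically $1$ as soon as $f$ is a $\pm 1$-valued constant. This is precisely the mechanism that upgrades the conclusion from $M = \pm L$ (which is all one could hope for without symmetry, compare Proposition \ref{conr}(i)) to the sharper $M = L$ in the symmetric setting.
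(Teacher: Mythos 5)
Your proposal is correct and follows essentially the same route as the paper, which proves this corollary in one line by combining Corollary \ref{comp} with part (ii) of Proposition \ref{conr}; your steps (continuity of $f$ via Proposition \ref{con}, constancy of a discrete-valued continuous function on the connected space $X$, and then $f\otimes f\equiv 1$) are exactly the proof of Proposition \ref{conr}(ii) unfolded and specialized to $\Gamma=\{-1,1\}$. Your closing observation --- that symmetry makes it unnecessary to rule out $f\equiv -1$, since only $f\otimes f$ enters --- is precisely the mechanism the paper relies on as well.
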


\textbf{A criterion for rescaling. }In this subsection we will give a criterion for two separately continuous functions to be rescalings based on equality \ref{*}. As was already mentioned, if $L$ and $M$ are rescalings, then \ref{*} holds for any $x,y,z$. Also note that if $L$ and $M$ are non-degenerate, and the equality \ref{*} holds for any $x,y,z$ then plugging in $x=z$ we see that $U_{y}\cap U^{y}$ is the same, for every $y\in Y$, with respect to $L$ and $M$. Moreover, in this case $\left(f_{y},g_{y}\right)$ rescales $L$ to $M$ on $U_{y}\cap U^{y}$, where $f_{y}=\frac{M\left(\cdot,y\right)L\left(y,y\right)}{L\left(\cdot,y\right)M\left(y,y\right)}$ and $g_{y}=\frac{M\left(y,\cdot\right)}{L\left(y,\cdot\right)}$. Hence, in this case $U_{y}\cap U^{y}\in \mathcal{Y}$, in the notations of Lemma \ref{zorn}. This allows us to obtain the following result.

\begin{theorem}\label{qr}
Let $X$ be a topological space and let $L$ and $M$ be non-degenerate bi-functions on $X$ such that \ref{*} is satisfied for every $x,y,z\in X$. Then $L$ and $M$ are rescalings whenever one of the following conditions is fulfilled:
\item[(i)] $L$ and $M$ are separately continuous, and there is $z\in X$ such that $\overline{U_{z}}=\overline{U^{z}}=X$ ($U_{z}$ and $U^{z}$ with respect to $L$);
\item[(ii)] $X$ is connected and locally connected, $L$ is separately continuous and $\overline{U_{z}}=\overline{U^{z}}=X$ for every $z\in X$.
\end{theorem}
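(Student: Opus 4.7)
The plan is to build on the observation made immediately before the statement: under \ref{*} together with non-degeneracy, for every $y \in X$ the functions $f_y = \frac{M(\cdot,y)L(y,y)}{L(\cdot,y)M(y,y)}$ and $g_y = \frac{M(y,\cdot)}{L(y,\cdot)}$ give a rescaling of $L$ to $M$ on $U_y \cap U^y$. Separate continuity of $L$ makes each $U_y \cap U^y$ into an open topological neighborhood of $y$ (which it contains by non-degeneracy), so ``local rescalings'' are always available. The two cases of the theorem only differ in how this local information is assembled into a global rescaling.

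For case (i), both $U_w$ and $U^w$ are open (by separate continuity of $L$) and dense (by hypothesis), so their intersection is open and dense. Hence $(f_w, g_w)$ rescales $L$ to $M$ on the dense set $U_w \cap U^w$, while the local rescalings on each $U_x \cap U^x$ supply the remaining ingredient needed for Proposition~\ref{conde}. Since $L$ and $M$ are both separately continuous and non-degenerate, that proposition applies and gives the global rescaling.

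For case (ii), $M$ is no longer assumed separately continuous and Proposition~\ref{conde} is unavailable, but now every $U_z \cap U^z$ is open and dense. I would argue via Zorn's lemma. Proposition~\ref{ocom}(i) together with topological connectedness of $X$ shows that $X_L$ is graph-connected; fixing $w$ and applying Lemma~\ref{zorn}, I would choose a maximal $Y_0 \in \mathcal{Y}$ containing $U_w \cap U^w$, and argue $Y_0 = X$ by showing that $Y_0$ is both open and closed in $X$. To see it is open, note that for $y \in Y_0$ the overlap $Y_0 \cap (U_y \cap U^y)$ is graph-connected as a star around $y$, so Corollary~\ref{condis} lets us glue the rescalings on $Y_0$ and on $U_y \cap U^y$ into one on $Y_0 \cup (U_y \cap U^y)$, forcing the latter into $Y_0$ by maximality. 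To see it is closed, suppose $x \in \overline{Y_0} \setminus Y_0$; local connectedness provides a connected open neighborhood $W$ of $x$, and Proposition~\ref{simeq}(i) applied to $L|_W$ gives that $W_L$ is graph-connected. An internal Zorn / chain-of-$(U_y \cap U^y)$ argument on $W$ produces a rescaling on $W$, after which $W \cap Y_0 \neq \emptyset$ and a final gluing contradicts the maximality of $Y_0$.

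The principal obstacle is the bookkeeping in this last gluing step: the overlap $Y_0 \cap W$ need not be graph-connected in $X_L$, so two rescalings a priori agree only up to a \emph{separate} constant on each graph-component, and one must verify that these constants can be chosen consistently. The interplay of local connectedness (making topological components of the overlap open) with Proposition~\ref{simeq} (identifying connected components with graph-components of the induced subgraph) is what makes this work. A more computational alternative that sidesteps the bookkeeping is to define $f$ and $g$ on $X$ directly by the explicit formulas $f(x) = \frac{M(x,z)L(w,z)}{L(x,z)M(w,z)}$ (and symmetrically for $g$) for any $z \in (U_w \cap U^w) \cap (U_x \cap U^x)$, which is nonempty for every $x$ by the density hypothesis of case (ii); well-definedness comes from \ref{*} applied to the triples $(x, z_1, z_3)$ and $(w, z_1, z_3)$ with an intermediate $z_3 \in (U_w \cap U^w) \cap (U_x \cap U^x) \cap (U_{z_1} \cap U^{z_1}) \cap (U_{z_2} \cap U^{z_2})$, and the rescaling identity $M = f \otimes g L$ follows from one more application of \ref{*} plus a short density argument showing $L(x,y) = 0$ forces $M(x,y) = 0$.
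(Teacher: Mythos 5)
Your case (i) argument is the paper's own: the local rescalings $(f_y,g_y)$ on the open neighborhoods $U_y\cap U^y$ together with the dense rescaling domain $U_z\cap U^z$ are precisely the input Proposition \ref{conde} needs, and nothing more is required there.

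For case (ii), your primary route has a genuine gap at the word ``glue''. Suppose $(f_1,g_1)$ rescales $L$ to $M$ on $Y_0$, $(f_y,g_y)$ rescales $L$ to $M$ on $V_y=U_y\cap U^y$, and the two pairs agree on the overlap (which Corollary \ref{condis} does give you after matching the normalizations at $y$, since the overlap is a graph-connected star around $y$). It still does not follow that the merged pair rescales $L$ to $M$ on $Y_0\cup V_y$: the definition of rescaling on the union also constrains the ``cross'' pairs $(x,x')$ with $x\in Y_0\setminus V_y$ and $x'\in V_y\setminus Y_0$, and neither hypothesis controls $M(x,x')$ against $L(x,x')$. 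Agreement on the overlap is genuinely insufficient: on $X=\{1,2,3,4\}$ take $L\equiv 1$ and $M\equiv 1$ except $M(1,4)=M(4,1)=-1$; then $M$ and $L$ coincide on $\{1,2,3\}$ and on $\{2,3,4\}$ but are not rescalings on $X$. The cross terms can only be recovered from \ref{*}, and that is exactly how the paper closes its own Zorn argument: having matched the two pairs on $U\cap W$, it verifies $M(x,x')=f(x)g(x')L(x,x')$ for arbitrary $x,x'$ in the union by applying \ref{*} with a middle point $z\in V_x\cap V_{x'}\cap U\cap W$, nonempty because $V_x,V_{x'}$ are dense open and $U\cap W$ is open nonempty. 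This step is absent from both your openness and your closedness arguments; note that the obstacle you do name (overlaps splitting into several graph components) is secondary, since the cross-term problem persists even when the overlap is a graph-connected star.

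Your ``computational alternative'', on the other hand, is correct, and it is a genuinely different proof from the paper's. The denominators are nonzero because \ref{*} with $x=z$ shows $U_y\cap U^y$ is the same for $L$ and $M$ (the paper's pre-theorem observation); well-definedness of $f$ follows, as you say, from \ref{*} applied to $(x,z_i,z_3)$ and $(w,z_i,z_3)$ for both $i=1,2$; and $M=f\otimes gL$ follows from \ref{*} with middle point $z\in V_x\cap V_y\cap V_w$ --- note this last application already yields $M(x,y)=f(x)g(y)L(x,y)$ even when $L(x,y)=0$, so your separate density argument for vanishing is unnecessary. One small repair: the exactly ``symmetric'' choice of $g$ rescales $L$ to $\frac{L(w,w)}{M(w,w)}M$ rather than to $M$, so either absorb that constant or take $g(y)=\frac{M(z,y)M(w,z)L(z,z)}{L(z,y)L(w,z)M(z,z)}$. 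What this route buys is substantial: it never uses connectedness or local connectedness of $X$, only non-degeneracy, \ref{*}, openness of the sets $U_z,U^z$ (separate continuity of $L$) and their density for every $z$, because all it needs is that finite intersections of the $V_z$ are nonempty. So it proves a formally stronger statement than (ii), whereas the paper's proof needs connectedness to conclude $U=X$ from $\partial U=\varnothing$ and local connectedness to make components of the $V_x$ open. What the paper's heavier machinery buys in exchange is reusability: Lemma \ref{zorn} and the maximal-domain scheme transfer with little change to the symmetric and Hermitean variants \ref{*'} and \ref{*''} treated immediately afterwards, while your explicit formulas would need to be re-derived in each variant.
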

\begin{proof}
First, note that sufficiency of (i) follows from Proposition \ref{conde}, since if \ref{*} is satisfied for every $x,y,z\in X$, then $L$ and $M$ are rescalings on $U_{y}\cap U^{y}$, for any $y\in X$. Hence, $L$ and $M$ are rescalings in a neighborhood of any point, and also, they are rescalings on a dense set $U_{z}\cap U^{z}$. Let us now prove sufficiency of (ii).

Let $\mathcal{U}$ stand for the class of all open connected subsets $U$ of $X$, such that $L$ and $M$ are rescalings on $U$. Combining the fact that the union of an increasing collection of open connected sets is open and connected with Lemma \ref{zorn}, we see that $\mathcal{U}$ satisfies the conditions of Zorn's lemma.

For every $x\in X$ define $V_{x}=U_{x}\cap U^{x}$, which is an open neighborhood of $x$. Note also that from the discussion above $L$ and $M$ are rescalings on $V_{x}$, and so any component of $V_{x}$ belongs to $\mathcal{U}$ (since $X$ is locally connected any component of an open set is open). Since both $U_{x}$ and $U^{x}$ are dense, then $\left(X\backslash U_{x}\right)\cup\left(X\backslash U^{x}\right)$ is nowhere dense, and so $V_{z}$ is dense (and open).\medskip

Using Zorn's Lemma we can choose a maximal element $U$ of $\mathcal{U}$. Since $X$ is connected, in order to prove that $U=X$ it is enough to show that $\partial U=\varnothing$. Assume that $w\in \partial U$. Then $V_{w}$ is an open neighborhood of $w\in\partial U\subset \overline{U}$, and so $U\cap V_{w}$ is a nonempty open set. Choose $v$ in the latter set; it follows that $v,w\in V_{v}$. Let $W$ be the component of $V_{v}$ that contains $w$. Then $W$ is a connected open neighborhood of $w$, and so it intersects with $U$. Since $U$ is connected, $U\cup W$ is also connected. If we show that $U\cup W\in\mathcal{U}$, we will reach a contradiction with the maximality of $U$, since $U\cup W$ contains $U$ and also contains $w\in \partial U\subset X\backslash U$.\medskip

Since $U\in\mathcal{U}$, there are $f,g:U\to\C^{\times}$ such that $\left(f,g\right)$ rescales $L$ to $M$ on $U$ with $f\left(v\right)=1$; we also have that $\left(f_{v},g_{v}\right)$ rescales $L$ to $M$ on $V_{v}\supset W$ and $f_{v}\left(v\right)=1$. Note that $v\in U\cap W\subset V_{v}$, and since both pairs $\left(f,g\right)$ and $\left(f_{v},g_{v}\right)$ rescale $L$ to $M$ on $U\cap W$ with $f\left(v\right)=f_{v}\left(v\right)$ it follows that $f,g$ and $f_{v},g_{v}$ agree on $U\cap W$. Hence, there are common extensions of $f$ and $f_{v}$ and of $g$ and $g_{v}$ on $U\cup W$ (we will also denote it by $f$ and $g$).\medskip

Now, for $x,y\in U\cup W$ choose $z\in V_{x}\cap V_{y}\cap W\cap U$ (this set is nonempty since $V_{x}$ and $V_{y}$ are dense open sets and $U\cap W$ is open and nonempty). Then, each of $L\left(x,z\right), L\left(y,z\right), M\left(z,z\right)$ are not zero, and from \ref{*} we have that
\begin{align*}
M\left(x,y\right)&=\frac{M\left(x,z\right)}{L\left(x,z\right)} \frac{M\left(z,y\right)}{L\left(z,y\right)} \frac{L\left(z,z\right)}{M\left(z,z\right)}L\left(x,y\right)\\ &=\frac{f\left(x\right)g\left(z\right)f\left(z\right)g\left(y\right)}{f\left(z\right)g\left(z\right)}L\left(x,y\right)=f\left(x\right)g\left(y\right)L\left(x,y\right).
\end{align*}
Since $x,y$ were chosen arbitrarily, we conclude that $\left(f,g\right)$ rescales $L$ to $M$ on $U\cup W$, and so we have reached a contradiction.
\end{proof}

For $x,y,z\in X$ consider equalities
\begin{equation}\label{*'}
M\left(x,y\right)M\left(z,y\right)L\left(x,z\right)L\left(y,y\right)=L\left(x,y\right)L\left(z,y\right)M\left(x,z\right)M\left(y,y\right),\tag*{($\ast '$)}
\end{equation}
\begin{equation}\label{*''}
M\left(x,y\right)\overline{M\left(z,y\right)}L\left(x,z\right)L\left(y,y\right)=L\left(x,y\right)\overline{L\left(z,y\right)}M\left(x,z\right)M\left(y,y\right).\tag*{($\ast ''$)}
\end{equation}

It is clear that if $L$ and $M$ are symmetric (or Hermitean) rescalings, then \ref{*'} (or \ref{*''}) is satisfied for any $x,y,z\in X$. Also, note that applying the equations above to $L'$ and $M'$ or $\overline{L}$ and $\overline{M}$ we obtain other variations of these equations. Adopting the proof of the preceding theorem and using a suitable variation of Lemma \ref{zorn} one can show the following criteria.

\begin{proposition}
Let $X$ be a topological space and let $L$ and $M$ be non-degenerate bi-functions on $X$ such that \textup{\ref{*'}} is satisfied for any
$x,y,z\in X$ (or \textup{\ref{*''}} is satisfied for any
$x,y,z\in X$ and there is $w\in X$ with $\frac{\widehat{M}\left(w\right)}{\widehat{L}\left(w\right)}>0$). Then $L$ and $M$ are symmetric (or Hermitean) rescalings once one of the following conditions is fulfilled:
\item[(i)] $L$ and $M$ are separately continuous, and there is $z\in X$ such that $\overline{U_{z}}=X$;
\item[(ii)] $X$ is connected and locally connected, $L$ is separately continuous and $\overline{U_{z}}=X$ for every $z\in X$.
\end{proposition}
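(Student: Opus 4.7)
The strategy is to mimic the proofs of Proposition \ref{conde} and Theorem \ref{qr}, replacing the rescaling-candidate pair $(f_y,g_y)$ of Lemma \ref{zorn} by a single function $f_y$ adapted to the symmetric (resp.\ Hermitean) setting.

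\smallskip

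\emph{Step 1: local symmetric/Hermitean rescaling.} Fix $y\in X$; since $L,M$ are non-degenerate, $L(y,y),M(y,y)\ne 0$. In the symmetric case, choose any $c_{y}\in\Cp$ with $c_{y}^{2}=M(y,y)/L(y,y)$ and define $f_{y}:U_{y}\to\Cp$ by $f_{y}(x)=M(x,y)/\bigl(L(x,y)c_{y}\bigr)$. A direct computation using \ref{*'} (with the roles of $y$ and $z$ as in \ref{*'}) gives
$$f_{y}(x)f_{y}(z)L(x,z)=\frac{M(x,y)M(z,y)L(x,z)L(y,y)}{L(x,y)L(z,y)M(y,y)}=M(x,z)$$
for every $x,z\in U_{y}$, so $(f_{y},f_{y})$ symmetrically rescales $L$ to $M$ on $U_{y}$. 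For the Hermitean case, first set $x=z$ in \ref{*''} to obtain $|M(x,y)/L(x,y)|^{2}=M(x,x)M(y,y)/(L(x,x)L(y,y))$, which is positive; propagating along the graph from the point $w$ where $\widehat{M}(w)/\widehat{L}(w)>0$ shows that $\widehat{M}/\widehat{L}$ is positive on the connected component of $w$ in $X_{L}$. On this component pick a positive square root $c_{y}>0$ of $M(y,y)/L(y,y)$ and define $f_{y}(x)=M(x,y)/\bigl(L(x,y)\overline{c_{y}}\bigr)$; the same calculation, with \ref{*''} in place of \ref{*'}, shows $f_{y}(x)\overline{f_{y}(z)}L(x,z)=M(x,z)$ on $U_{y}$, i.e.\ $(f_{y},\overline{f_{y}})$ Hermiteanly rescales $L$ to $M$ there.

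\smallskip

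\emph{Step 2, condition (i).} With $z\in X$ such that $\overline{U_{z}}=X$, Step 1 shows that $L$ and $M$ are symmetric (resp.\ Hermitean) rescalings on the dense set $U_{z}$ and also on a neighborhood ($U_{x}$, which is open by separate continuity and non-degeneracy) of every point $x\in X$. The symmetric/Hermitean version of Proposition \ref{conde} mentioned in the remark after it then yields a symmetric (resp.\ Hermitean) rescaling on all of $X$.

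\smallskip

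\emph{Step 3, condition (ii).} I adapt the Zorn argument of Theorem \ref{qr}(ii). Let $\mathcal{U}$ be the class of open connected $U\subset X$ on which $L$ and $M$ are symmetric (resp.\ Hermitean) rescalings; using the uniqueness given by Corollary \ref{condis}(ii) (up to $\pm 1$ for the symmetric case and up to a positive constant for the Hermitean case) one verifies, as in Lemma \ref{zorn}, that $\mathcal{U}$ satisfies Zorn's hypothesis. Pick a maximal $U\in\mathcal{U}$ and assume for contradiction that $w\in\partial U$. Since $\overline{U_{w}}=X$, the open set $U_{w}$ meets $U$; pick $v$ in the intersection. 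Because $X$ is locally connected, the component $W$ of $U_{v}$ containing $w$ is open, $v\in U\cap W$, and $L,M$ are symmetric (resp.\ Hermitean) rescalings on $W$ by Step 1. Using the uniqueness in Corollary \ref{condis}(ii) one normalizes the local rescaling on $W$ to agree with the given one on $U$ at $v$, obtaining a common extension on $U\cup W$. Exactly as in the proof of Theorem \ref{qr}(ii), the identity \ref{*'} (resp.\ \ref{*''}) shows that this extended function still rescales $L$ to $M$ on all of $U\cup W$, contradicting maximality. Hence $\partial U=\varnothing$, and $U=X$ by connectedness of $X$.

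\smallskip

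\emph{Main obstacle.} The principal subtleties are choosing the branches/signs in the square root that defines $c_{y}$ and verifying that the local pieces $f_{y}$ can be glued coherently on overlaps. In the Hermitean case this forces the auxiliary positivity hypothesis at $w$ and requires showing that $\widehat{M}/\widehat{L}>0$ propagates along the graph, which is exactly what \ref{*''} with $x=z$ delivers. In the symmetric case the $\pm 1$ ambiguity of the square root is precisely the ambiguity already intrinsic to symmetric rescalings, so gluing via Corollary \ref{condis}(ii) is automatic once Zorn's lemma is in place.
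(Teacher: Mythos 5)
Your Steps 1 and 2 are correct and are exactly what the paper intends: the local functions $f_{y}=\frac{M\left(\cdot,y\right)}{L\left(\cdot,y\right)}\sqrt{\frac{L\left(y,y\right)}{M\left(y,y\right)}}$ on $U_{y}$ are the ones named in the paper's remark, and condition (i) does follow from Proposition \ref{conde} together with the remark after it. (One small point: positivity of $\frac{\widehat{M}}{\widehat{L}}$ propagates only along the component of $w$ in $X_{L}$, so you should note that under (i) or (ii) the graph $X_{L}$ is connected --- every $x$ has the open neighbourhood $U_{x}$, which meets the dense set $U_{z}$, giving a path $x$--$v$--$z$; also, Hermitean rescaling functions are unique up to a unimodular, not positive, constant.)

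The genuine gap is in Step 3, and it sits exactly at the point where adapting Theorem \ref{qr}(ii) is not routine: under \ref{*'} the adjacency relation is no longer symmetric. You pick $v\in U\cap U_{w}$, i.e.\ $L\left(v,w\right)\ne 0$, and then form ``the component $W$ of $U_{v}$ containing $w$''; but $w\in U_{v}$ means $L\left(w,v\right)\ne 0$, which does not follow. Indeed, take $M=L$ with $L$ non-symmetric (then \ref{*'} holds identically): on $X=\R$ the bi-function $L\left(x,y\right)=1-\phi\left(y\right)\left(1-\left|x-5\right|\right)$, where $\phi$ is continuous, $\left[0,1\right]$-valued and equals $1$ exactly on $\left[2,3\right]$, is non-degenerate, separately continuous, has every $U_{z}$ dense, yet $L\left(v,5\right)=1$ and $L\left(5,v\right)=0$ for $v\in\left[2,3\right]$. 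In Theorem \ref{qr} the flip from $v\in V_{w}$ to $w\in V_{v}$ is legitimate because the patches are the symmetric sets $V_{x}=U_{x}\cap U^{x}$; here the only patches carrying local rescalings are the $U_{y}$'s. The correct move is to find $y\in U$ with $L\left(w,y\right)\ne 0$ (so that $w\in U_{y}$), and its existence comes not from density of $U_{w}$ but from continuity of $L\left(w,\cdot\right)$ in the second variable: if $L\left(w,\cdot\right)$ vanished on $U$ it would vanish on $\overline{U}\ni w$, contradicting non-degeneracy. A second, inherited gap: the closing appeal ``exactly as in the proof of Theorem \ref{qr}(ii)'' fails, because there the auxiliary point is chosen in $V_{x}\cap V_{y}\cap W\cap U$ using density of the sets $U^{x}$ as well; under (ii) only the $U_{x}$ are assumed dense, and $U^{x}$ can fail to be dense (same example). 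One must rearrange \ref{*'} so that the auxiliary point occupies the first slot of both factors: for $x\in U$, $x'\in W$ with $L\left(x,x'\right)\ne 0$, choose $a\in U_{x}\cap U_{x'}\cap U\cap W$ and apply \ref{*'} to the triple $\left(a,x',x\right)$ to get $M\left(x,x'\right)=\frac{L\left(a,x'\right)M\left(a,x\right)M\left(x',x'\right)}{M\left(a,x'\right)L\left(a,x\right)L\left(x',x'\right)}L\left(x,x'\right)=f\left(x\right)f_{y}\left(x'\right)L\left(x,x'\right)$, which uses only the dense sets $U_{x},U_{x'}$. Finally, the agreement of $f_{y}$ with $f$ on $U\cap U_{y}$ should be obtained from the explicit formula for $f_{y}$ (using $y\in U$, so $\frac{M\left(x,y\right)}{L\left(x,y\right)}=f\left(x\right)f\left(y\right)$ there), not from Corollary \ref{condis}(ii), since $U\cap W$ need not be connected. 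With these repairs your outline does become a proof.
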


\begin{remark}
The reason for an additional condition for Hermitean rescalings is that we have to guarantee that $\frac{\widehat{M}}{\widehat{L}}>0$ so that $f_{y}$ Hermiteanly rescales $L$ to $M$ on $U_{y}$ with $f_{y}\left(y\right)>0$, where $f_{y}=\frac{M\left(\cdot,y\right)}{L\left(\cdot,y\right)}\sqrt{\frac{L\left(y,y\right)}{M\left(y,y\right)}}$. The existence of $w\in X$ with $\frac{\widehat{M}\left(w\right)}{\widehat{L}\left(w\right)}>0$ implies $\frac{\widehat{M}}{\widehat{L}}>0$. Indeed, substituting $x=y=z$ we get that $\frac{\widehat{M}}{\widehat{L}}$ is real-valued, while substituting $x=z$ we get that $\frac{\widehat{M}\left(x\right)}{\widehat{L}\left(x\right)}\frac{\widehat{M}\left(y\right)}{\widehat{L}\left(y\right)}>0$, whenever $x$ and $y$ are joined with an edge in $X_{L}$. Using $\frac{\widehat{M}\left(w\right)}{\widehat{L}\left(w\right)}>0$ and the equivalence relation argument as in Section \ref{bf}, we conclude that $\frac{\widehat{M}}{\widehat{L}}>0$.
\qed
\end{remark}

Combining this with Corollary \ref{nv} and part (i) of Proposition \ref{conr} we get that if $L$ and $M$ are non-degenerate symmetric bi-functions on a connected topological space $X$ that satisfy either of the conditions of Corollary \ref{nv}, and also either of the conditions of the preceding proposition, then $L=M$. Consider an example that shows that the existence of $z$ with $\overline{U_{z}}=X$ is essential. In fact, we will construct two real-valued symmetric bi-functions on a closed interval, which satisfy the equality from part (i) of Corollary \ref{nv}, for every $x,y,z$, but are not rescalings.

\begin{example}\label{exa}
Let $K_{0}$ be an arbitrary continuous non-vanishing bi-function on $\left(-1,1\right)$, such that $\lim\limits_{\left|x\right|\to 1}\widehat{K_{0}}\left(x\right)= 0$, e.g. $K_{0}\left(x,y\right)=\left(1-x^{2}\right)\left(1-y^{2}\right)$, and extend it on $\R\times \R$ by $0$. For $n\in\Z$ define $K_{n}:\R\times \R\to \R$ by $K_{n}\left(x,y\right)=K_{0}\left(x-n,y-n\right)$, and let $K=\sum\limits_{n\in\Z}K_{n}$. It is clear that locally the sum is of at most two non-zero summands, and so $K$ is continuous.

Observe that if $x\le n$ and $y\ge n+1$, for some $n$, then $K\left(x,y\right)=0$. Therefore, the set of points in $\R\times \R$ where $K$ is not zero is a ``ladder'' that may be viewed as a continuous analogue of $L$ from Example \ref{ex}. Adding extra regions where $K$ does not vanish in a way similar to the construction of $L_{A}$ from that example will increase the complexity of the graph of $K$. Here we will only construct an analogue of $L_{4}^{\pm}$, but it should be clear how to construct analogues of $L_{n}^{\pm}$ or general $L_{A}$.

Let $f_{4}^{\pm}:\left[0,4\right]\to\R$ be a continuous function defined by $$f_{4}^{\pm}\left(x\right)=\left\{\begin{array}{lll} \pm\left(1-2x\right) & 0\le x\le\frac{1}{2} \\ 0 & \frac{1}{2}\le x\le \frac{7}{2}  \\ 2x-7 & \frac{7}{2}\le x\le 4  \end{array}\right.$$
Let $M^{\pm}=K\left|_{\left[0,4\right]\times \left[0,4\right]}\right.+f_{4}^{\pm}\otimes f_{4}^{\pm}$. Observe that $$M^{\pm}\left(x,y\right)=\left\{\begin{array}{lll} K\left(x,y\right) & \frac{1}{2}\le x\le \frac{7}{2}\mbox{ or }\frac{1}{2}\le y\le \frac{7}{2} \\ K\left(x,y\right)+f^{+}\left(x\right)f^{+}\left(y\right) & x,y\le \frac{1}{2}\mbox{ or }x,y\ge\frac{7}{2}  \\
\pm f^{+}\left(x\right)f^{+}\left(y\right) & x\le \frac{1}{2}\mbox{ and }y\ge\frac{7}{2}\mbox{, or }y\le \frac{1}{2}\mbox{ and }x\ge\frac{7}{2}  \end{array}\right.$$
Thus, $M^{+}$ and $M^{-}$ are non-equal continuous bi-functions with $\left|M^{+}\right|=\left|M^{-}\right|$. Hence, they are not rescalings, according to Remark \ref{conrr}. However, we will show that the equality from Corollary \ref{nv} holds for $M^{+}$, $M^{-}$ and any $x,y,z\in X$. Without loss of generality we may assume that $x\le y\le z$.\medskip

If $x\ge \frac{1}{2}$, or $z\le\frac{7}{2}$ we have $M^{+}\left(x,y\right)=M^{-}\left(x,y\right)$, $M^{+}\left(y,z\right)=M^{-}\left(y,z\right)$, $M^{+}\left(z,x\right)=M^{-}\left(z,x\right)$, and so the equality holds. If $y\le \frac{1}{2}$ and $z\ge\frac{7}{2}$ we have $M^{+}\left(x,y\right)=M^{-}\left(x,y\right)$, $M^{+}\left(y,z\right)=-M^{-}\left(y,z\right)$ and $M^{+}\left(z,x\right)=-M^{-}\left(z,x\right)$, and so the equality holds. The case when $y\ge \frac{7}{2}$ and $x\le\frac{1}{2}$ is analogous. Finally, if $x<\frac{1}{2}$, $\frac{1}{2}\le y\le \frac{7}{2}$ and $z>\frac{7}{2}$, then $M^{\pm}\left(x,y\right)=K\left(x,y\right)$, $M^{\pm}\left(y,z\right)=K\left(y,z\right)$, and either $K\left(x,y\right)=0$ or $K\left(y,z\right)=0$ depending on whether $y\ge 2$ or $y\le 2$. Hence, the equality holds.
\qed\end{example}

\section{Rescaling of sesqui-holomorphic functions}\label{resq}

Let $X$ be a domain in $\C^{n}$, i.e. an open connected set. In this section we perform an analogous investigation to the preceding section, but this time instead of continuity we will be considering holomorphicity. Let us start with the following fact (the proof is similar to the proof of Proposition \ref{con}).

\begin{proposition}
Let $X$ be a domain in $\C^{n}$ and let $L$ and $M$ be bi-functions on $X$, which are (anti-) holomorphic in the first variable, and for every $x\in X$ there is $y\in X$ such that $L\left(x,y\right)\ne 0$. If $f,g:X\to\Cp$ are such that $\left(f,g\right)$ rescales $L$ to $M$, then $f$ is (anti-) holomorphic.
\end{proposition}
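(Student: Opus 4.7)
The plan is to mimic the proof of Proposition \ref{con} almost verbatim, with holomorphicity playing the role of continuity. The key observation is that on an open set where the denominator is non-vanishing, a quotient of two holomorphic functions is holomorphic, and that holomorphy is a local property on a domain in $\C^{n}$. So first I would fix an arbitrary $x_{0}\in X$; by hypothesis there is $y_{0}\in X$ such that $L\left(x_{0},y_{0}\right)\ne 0$, and since $L\left(\cdot,y_{0}\right)$ is holomorphic on $X$ (hence continuous), the set $U_{y_{0}}=\left\{x\in X\left|L\left(x,y_{0}\right)\ne 0\right.\right\}$ is an open neighborhood of $x_{0}$.

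On $U_{y_{0}}$ the rescaling identity $M\left(x,y_{0}\right)=f\left(x\right)g\left(y_{0}\right)L\left(x,y_{0}\right)$ rearranges to
$$f\left(x\right)=\frac{1}{g\left(y_{0}\right)}\cdot\frac{M\left(x,y_{0}\right)}{L\left(x,y_{0}\right)}.$$
Since $M\left(\cdot,y_{0}\right)$ and $L\left(\cdot,y_{0}\right)$ are both holomorphic on $X$, and $L\left(\cdot,y_{0}\right)$ is non-vanishing on $U_{y_{0}}$, this represents $f$ on $U_{y_{0}}$ as a quotient of holomorphic functions with non-vanishing denominator, hence as a holomorphic function. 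Thus $f$ is holomorphic in a neighborhood of $x_{0}$, and since $x_{0}$ was arbitrary, $f$ is holomorphic on $X$.

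For the anti-holomorphic case I would pass to complex conjugates: if $L$ and $M$ are anti-holomorphic in the first variable, then $\overline{L}$ and $\overline{M}$ are holomorphic in the first variable, and $\left(\overline{f},\overline{g}\right)$ rescales $\overline{L}$ to $\overline{M}$, still taking values in $\Cp$. Applying the holomorphic case yields that $\overline{f}$ is holomorphic, i.e.\ $f$ is anti-holomorphic. I do not anticipate any genuine obstacle: the whole argument reduces to the local quotient representation of $f$ noted already in the discussion preceding Proposition \ref{ocom}, together with the fact that holomorphy enjoys the same closure under this sort of rational manipulation as continuity does.
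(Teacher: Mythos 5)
Your proposal is correct and is essentially the paper's own argument: the paper explicitly says the proof is "similar to the proof of Proposition \ref{con}", and your local quotient representation $f=\frac{1}{g\left(y_{0}\right)}\frac{M\left(\cdot,y_{0}\right)}{L\left(\cdot,y_{0}\right)}$ on the open set $U_{y_{0}}$ is precisely that adaptation, with holomorphy of the quotient replacing continuity. The conjugation reduction for the anti-holomorphic case is also the intended one, since the paper already notes that $\left(\overline{f},\overline{g}\right)$ rescales $\overline{L}$ to $\overline{M}$.
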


It is clear that the same result holds for the second variable and $g$.\medskip

Observe that by Hartog's theorem (see \cite[VII.4, Theorem 4]{bm}) a bi-function $L$ on $X$ is holomorphic if and only if it is holomorphic in each variable. In this case the diagonal function $\widehat{L}$ is also holomorphic. Consider another class of bi-functions. A bi-function $L$ on $X$ is called \emph{sesqui-holomorphic} if it is holomorphic in the first variable and anti-holomorphic in the second. Let $X^{*}=\left\{\overline{x}\left|x\in X\right.\right\}\subset \C^{n}$. By Hartog's theorem $L$ is sesqui-holomorphic if and only if the function $M:X\times X^{*}\to \C$ defined by $M\left(x,y\right)=L\left(x,\overline{y}\right)$ is holomorphic. Hence, any sesqui-holomorphic bi-function is continuous.

In this section we will heavily rely on the Uniqueness Principle. Namely, recall that if two (anti-) holomorphic functions coincide on a somewhere dense (i.e. whose closure has a nonempty interior) subset of their domain, then they are equal (see \cite[II.2, Theorem 4]{bm}). Hence, if $L$ is a (sesqui-) holomorphic non-degenerate bi-function, then both $U_{y}=\left\{x\in X\left|L\left(x,y\right)\ne 0\right.\right\}$ and $U^{y}=\left\{x\in X\left|L\left(y,x\right)\ne 0\right.\right\}$ are dense in $X$, for any $y\in X$. Indeed, if $\overline{U_{y}}\ne X$ (or $\overline{U^{y}}\ne X$), then the (anti-) holomorphic function $L\left(\cdot,y\right)$ (or $L\left(y,\cdot\right)$) vanishes on an open set, and so it is identically zero, which contradicts $L\left(y,y\right)\ne 0$. Another consequence is that being rescalings is determined by the behavior on a somewhere dense set.

\begin{proposition}
Let $L$ and $M$ be (sesqui-) holomorphic non-degenerate bi-functions on a domain $X\subset \C^{n}$. If $L$ and $M$ are rescalings on a somewhere dense subset of $X$, then $L$ and $M$ are rescalings. In particular, if there is a somewhere dense connected $U\subset X$ and $f,g:U\to\Cp$ such that $\left(f,g\right)$ rescales $L$ to $M$ on $U$, then $f$ and $g$ can be extended on $X$ so that the extension $\left(f,g\right)$ rescales $L$ to $M$ on $X$.
\end{proposition}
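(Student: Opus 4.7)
The plan is to reduce the statement to Theorem \ref{qr} by showing that the rescaling identity \ref{*} propagates from the somewhere-dense set $Y$ to all of $X$, and then, for the \emph{in particular} part, to adjust the resulting global rescaling by a scalar so that it restricts to $(f,g)$ on $U$. Since $X\subset\C^n$ is connected and locally connected and since $L$ is non-degenerate, the (anti-)holomorphic functions $L(\cdot,z)$ and $L(z,\cdot)$ are nonzero for every $z\in X$, so their vanishing sets are proper analytic subvarieties, hence nowhere dense; thus $\overline{U_z}=\overline{U^z}=X$ for every $z\in X$, and condition (ii) of Theorem \ref{qr} is satisfied. It therefore suffices to check that \ref{*} holds for every $x,y,z\in X$.

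For that step, the plan is to introduce
\[
\Phi(x,y,z) \;=\; M(x,y)M(y,z)L(x,z)L(y,y) - L(x,y)L(y,z)M(x,z)M(y,y)
\]
on the domain $X^3\subset\C^{3n}$. A direct substitution using the rescaling hypothesis on $Y$ shows $\Phi\equiv 0$ on $Y^3$. The somewhere-dense assumption provides an open ball $B\subset\mathrm{int}\,\overline{Y}$, so $Y^3$ is dense in $B^3$, and continuity of $\Phi$ (every sesqui-holomorphic bi-function being continuous, as noted in the text) forces $\Phi\equiv 0$ on the open set $B^3$. In the holomorphic case $\Phi$ is holomorphic on $X^3$, and the Uniqueness Principle yields $\Phi\equiv 0$ on all of $X^3$. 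In the sesqui-holomorphic case the factors $L(y,y)$ and $M(y,y)$ destroy holomorphicity in $y$, but $\Phi$ is still real-analytic on $X^3$, and the real-analytic version of the Uniqueness Principle on the connected open set $X^3$ yields the same conclusion. Theorem \ref{qr} then supplies $\tilde f,\tilde g:X\to\Cp$ rescaling $L$ to $M$ on all of $X$.

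For the extension claim, equip $U$ with the subspace topology. Then $L|_{U\times U}$ is non-degenerate and continuous, so by Proposition \ref{ocom} the components of $U_L$ are open, and Proposition \ref{simeq}(i) combined with the connectedness of $U$ gives that $U_L$ is connected. Both $(f,g)$ and $(\tilde f|_U,\tilde g|_U)$ rescale $L|_U$ to $M|_U$, so Corollary \ref{components0}(ii) produces a constant $c\in\Cp$ with $\tilde f|_U=cf$ and $\tilde g|_U=c^{-1}g$. The adjusted pair $(c^{-1}\tilde f,\,c\,\tilde g)$ then rescales $L$ to $M$ on all of $X$ and restricts to $(f,g)$ on $U$, supplying the required extension.

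The main technical hurdle is the uniqueness step in the sesqui-holomorphic case: the factors $L(y,y)$ and $M(y,y)$ destroy joint holomorphicity of $\Phi$, so the complex Uniqueness Principle cited elsewhere in the paper does not apply directly and one must explicitly fall back on its real-analytic counterpart on the underlying real domain $X^3\subset\R^{6n}$.
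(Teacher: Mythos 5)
Your proposal is correct, and its second half (the extension argument via the subspace topology on $U$, connectedness of $U_L$, and adjustment by a constant from Corollary \ref{components0}(ii)) is essentially the paper's own argument, phrased with a normalize-after rather than normalize-before step. The first half, however, takes a genuinely different route at the key uniqueness step. The paper does not work with the three-variable function $\Phi$ at all: it \emph{polarizes} the diagonal, defining on $(X\times X)\times(X\times X)$ the function $N(x,y,z,w)=M(x,w)M(y,z)L(x,z)L(y,w)-L(x,w)L(y,z)M(x,z)M(y,w)$, in which the problematic factors $L(y,y)$, $M(y,y)$ become $L(y,w)$, $M(y,w)$. This $N$ is genuinely (sesqui-)holomorphic in all four variables, so the complex Uniqueness Principle already cited in the paper applies verbatim; vanishing on $V^{4}$ gives $N\equiv 0$, and the substitution $w=y$ recovers \ref{*} everywhere. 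Your version keeps the three-variable $\Phi$ and, exactly because the diagonal factors destroy joint (sesqui-)holomorphicity, must fall back on the identity theorem for real-analytic functions on $X^{3}\subset\R^{6n}$ — which does work, since a sesqui-holomorphic function is real-analytic (it is holomorphic after conjugating the second variable, and conjugation is real-analytic), and you correctly reduce the somewhere-dense hypothesis to vanishing on an open box $B^{3}$. So both proofs are valid: yours is more direct and flags the real-analytic issue honestly, but imports a tool external to the paper; the paper's polarization trick is the cleaner move in this context, as it stays entirely within the complex-analytic Uniqueness Principle the paper has already set up for sesqui-holomorphic functions, at the cost of the slightly less obvious four-variable construction.
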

\begin{proof}
Let $Y=X\times X$. Define a (sesqui-) holomorphic function $N:Y\times Y\to\C$ by
$$N\left(x,y,z,w\right)=M\left(x,w\right)M\left(y,z\right)L\left(x,z\right)L\left(y,w\right)-L\left(x,w\right)L\left(y,z\right)M\left(x,z\right)M\left(y,w\right).$$
As $L$ and $M$ are rescalings on a somewhere dense subset $V$ of $X$, a simple calculation shows that $N$ vanishes on $V\times V$, which is somewhere dense in $Y$. Therefore, from the Uniqueness Principle $N\equiv 0$, and substituting $y=w$ in this equality we get that \ref{*} holds for any $x,y,z\in X$. Since $L$ and $M$ satisfy the conditions of Theorem \ref{qr}, we conclude that $L$ and $M$ are rescalings.

Now assume that there is a somewhere dense connected $U\subset X$ and $f,g:U\to\Cp$ such that $\left(f,g\right)$ rescales $L$ to $M$ on $U$. Then $L$ and $M$ are rescalings, and so there are $f_{1},g_{1}:X\to\Cp$ such that $M=f_{1}\otimes g_{1} L$ and $f_{1}\left(x\right)=f\left(x\right)$, for some $x\in U$. Since both $\left(f,g\right)$ and $\left(f_{1},g_{1}\right)$ rescale $L$ to $M$ on a connected set $U$, and $L$ and $M$ are continuous, it follows from part (ii) of Proposition \ref{simeq} that $f=f_{1}$ and $g=g_{1}$ on $U$. Hence, $f_{1}$ and $g_{1}$ are extensions of $f$ and $g$ respectively.
\end{proof}

\begin{remark}\label{logl}
If in the preceding proposition $f=g$, then $f_{1}=g_{1}$, due to the Uniqueness Principle. Hence, if $L$ and $M$ are holomorphic non-degenerate bi-functions on $X$, which are symmetric rescalings on a somewhere dense $U\subset X$, then they are symmetric rescalings. The analogous is also true for reciprocal rescalings, and for Hermitean rescalings of sesqui-holomorphic bi-functions.
\qed\end{remark}

We can now state a criterion for being symmetric rescalings in the class of holomorphic bi-functions.

\begin{proposition}\label{rigs}
Let $L$ and $M$ be holomorphic non-degenerate bi-functions on a domain $X$. Then $L$ and $M$ are symmetric rescalings if and only if there is a somewhere dense set $U\subset X$ such that $\frac{L\left(x,y\right)^{2}}{L\left(x,x\right)L\left(y,y\right)}=\frac{M\left(x,y\right)^{2}}{M\left(x,x\right)M\left(y,y\right)}$, for every $x,y\in U$.
\end{proposition}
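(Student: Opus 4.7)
Necessity is immediate: if $f:X\to \Cp$ symmetrically rescales $L$ to $M$, then $\widehat{M}(x)=f(x)^{2}\widehat{L}(x)$, so the factors $f(x)^{2}f(y)^{2}$ cancel in the ratio $M(x,y)^{2}/(\widehat{M}(x)\widehat{M}(y))$, yielding the stated identity on all of $X$ and hence on any $U$.

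For sufficiency I would proceed in three steps. \emph{Step 1 (globalize the identity).} The two functions $(x,y)\mapsto M(x,y)^{2}\widehat{L}(x)\widehat{L}(y)$ and $(x,y)\mapsto L(x,y)^{2}\widehat{M}(x)\widehat{M}(y)$ are holomorphic on the domain $X\times X\subset\C^{2n}$ and agree on $U\times U$, which is somewhere dense in $X\times X$ (if $V\subset X$ is open with $V\subset \overline{U}$, then $V\times V$ is open in $X\times X$ and contained in $\overline{U\times U}$). By the Uniqueness Principle they coincide on all of $X\times X$; dividing by $\widehat{L}(x)\widehat{L}(y)\widehat{M}(x)\widehat{M}(y)$, which is nonzero by non-degeneracy, yields $L(x,y)^{2}/(\widehat{L}(x)\widehat{L}(y))=M(x,y)^{2}/(\widehat{M}(x)\widehat{M}(y))$ for every $x,y\in X$.

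\emph{Step 2 (local construction of $f$).} Pick any $x_{0}\in X$. Since $L(x_{0},x_{0})\neq 0$ and $L$ is continuous, choose an open ball $B\subset X$ around $x_{0}$ on which $L$ is nowhere zero on $B\times B$ (and then $M$ is nowhere zero on $B\times B$ by the identity). The function $\psi=\widehat{M}/\widehat{L}$ is holomorphic and non-vanishing on $X$, and $B$ is simply connected, so there exists a holomorphic $f:B\to\Cp$ with $f^{2}=\psi$. The identity from Step 1 now reads $\bigl(M(x,y)/L(x,y)\bigr)^{2}=\bigl(f(x)f(y)\bigr)^{2}$ on $B\times B$, so the holomorphic function $(x,y)\mapsto M(x,y)/\bigl(f(x)f(y)L(x,y)\bigr)$ takes values only in $\{-1,1\}$ on the connected set $B\times B$ and is therefore a constant $\varepsilon\in\{-1,1\}$. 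Replacing $f$ by $\varepsilon f$ if necessary, we obtain $M(x,y)=f(x)f(y)L(x,y)$ on $B\times B$, so $L$ and $M$ are symmetric rescalings on $B$.

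\emph{Step 3 (globalize the rescaling).} Since $B$ is open and non-empty, it is somewhere dense in $X$, and Remark \ref{logl} upgrades symmetric rescaling on a somewhere dense subset to symmetric rescaling on all of $X$. The main anticipated obstacle is that $\psi$ need not admit a global holomorphic square root on $X$ (the domain need not be simply connected or even have trivial second cohomology), which is why the construction of $f$ is carried out only on a small ball and the passage to $X$ is done not by extending $f$ directly but by invoking the previously established extension principle for rescalings on somewhere dense sets.
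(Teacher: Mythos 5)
Your proof follows essentially the same route as the paper's: reduce to a nonempty open set, take a local holomorphic square root $f$ of $\widehat{M}/\widehat{L}$ on a small ball where $L$ and $M$ are nonvanishing, observe that $M/(f\otimes f\, L)$ is then a holomorphic $\{-1,1\}$-valued function on a connected set and hence constant, and finally invoke Remark \ref{logl} to upgrade a symmetric rescaling on a somewhere dense set to a global one. The only structural difference is minor: you first globalize the ratio identity to all of $X\times X$ via the Uniqueness Principle, whereas the paper simply replaces $U$ by $\Int\overline{U}$ using continuity of $L$ and $M$; both are valid, and your observation that $U\times U$ is somewhere dense in $X\times X$ is correct.

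There is, however, one step that fails as written: when the constant $\varepsilon$ equals $-1$, replacing $f$ by $\varepsilon f=-f$ does not absorb the sign, because $(-f)(x)(-f)(y)=f(x)f(y)$, so $(-f)\otimes(-f)\,L=f\otimes f\,L=-M$, not $M$. The correct replacement is $\ii f$, since $(\ii f)(x)(\ii f)(y)=-f(x)f(y)$; this is exactly the paper's move (``in the second case $\ii f$ symmetrically rescales $L$ to $M$''). With this one-line repair your argument is complete.
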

\begin{proof}
Necessity follows from substituting $x=z$ into \ref{*'}. Let us show sufficiency. Since both $L$ and $M$ are continuous and non-degenerate, by replacing $U$ with $\Int \overline{U}$ we may assume that $U$ is a nonempty open set.

Since $M$ and $L$ are non-degenerate, $\frac{\widehat{M}}{\widehat{L}}$ is a holomorphic non-vanishing function on $X$. Fix $z\in U$. Let $V\subset U$ be a neighborhood of $z$ such that $M\left(x,y\right)\ne 0\ne L\left(x,y\right)$, for any $x,y\in V$.

There is an open neighborhood $W\subset V$ of $z$ and holomorphic $f:W\to\Cp$ such that $\frac{\widehat{M}}{\widehat{L}}=f^{2}$ on $W$. Then, for any $x,y\in W$ we have $\frac{f\left(x\right)^{2}f\left(y\right)^{2}L\left(x,y\right)^{2}}{M\left(x,y\right)^{2}}=1$, and so $\frac{f\otimes fL}{M}$ is a holomorphic bi-function on $W$ with values in $\left\{-1,1\right\}$. Hence, either $M=f\otimes fL$ on $W$, or $M=-f\otimes fL$. In the first case $f$ symmetrically rescales $L$ to $M$ on $W$, while in the second $\ii f$ symmetrically rescales $L$ to $M$ on $W$. Thus, from the preceding remark, $L$ and $M$ are symmetric rescalings.
\end{proof}

In order to obtain a similar characterization for Hermitean rescalings of sesqui-holomorphic bi-functions we need a special Uniqueness Principle (it follows from \cite[II.4, Theorem 7]{bm}; see also \cite{nik}).

\begin{proposition}\label{rig} Let $L$ and $M$ be sesqui-holomorphic bi-functions on a domain $X\subset \C^{n}$. If $\widehat{L}=\widehat{M}$ on a somewhere dense set $U\subset X$, then $L=M$.
\end{proposition}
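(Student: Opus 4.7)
The plan is to reduce the statement to a two-step uniqueness argument: first use real-analytic uniqueness on $X$ to upgrade the hypothesis from ``on $U$'' to ``on all of $X$,'' and then use a uniqueness principle for holomorphic functions on a maximally totally real submanifold to pass from the diagonal equality to full equality.

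First I would ``complexify'' the second variable: since $L$ and $M$ are sesqui-holomorphic, Hartog's theorem (as recalled right before the statement) gives that $F:X\times X^{\ast}\to\C$ defined by $F(x,y)=L(x,\overline{y})-M(x,\overline{y})$ is holomorphic. The hypothesis becomes $F(x,\overline{x})=\widehat{L}(x)-\widehat{M}(x)=0$ for $x\in U$.

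Next, observe that $\widehat{L}-\widehat{M}$ is real-analytic on $X$ (viewed as an open subset of $\R^{2n}$), because it is the restriction of the holomorphic function $F$ on $X\times X^{\ast}$ to the real-analytic map $x\mapsto (x,\overline{x})$. Since $U$ is somewhere dense, $\overline{U}$ contains a nonempty open set $V\subset X$; by continuity $\widehat{L}-\widehat{M}\equiv 0$ on $V$, and by the uniqueness principle for real-analytic functions on the connected open set $X$, we get $\widehat{L}=\widehat{M}$ on all of $X$. Equivalently, $F$ vanishes on the whole ``antidiagonal'' $\Delta^{\ast}=\{(x,\overline{x}):x\in X\}\subset X\times X^{\ast}$.

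Now the core step: show $F\equiv 0$ on the connected domain $X\times X^{\ast}\subset \C^{2n}$. Fix $x_{0}\in X$ and expand
\[
F(x,y)=\sum_{\alpha,\beta}c_{\alpha\beta}(x-x_{0})^{\alpha}(y-\overline{x_{0}})^{\beta}
\]
in a polydisc around $(x_{0},\overline{x_{0}})$. Substituting $y=\overline{x}$ and writing $z=x-x_{0}$, the condition $F(x,\overline{x})=0$ becomes
\[
\sum_{\alpha,\beta}c_{\alpha\beta}z^{\alpha}\overline{z}^{\beta}=0
\]
in a neighborhood of $0$ in $\C^{n}$. Since the real-analytic functions $z^{\alpha}\overline{z}^{\beta}$ are linearly independent, every $c_{\alpha\beta}$ vanishes, so $F\equiv 0$ in a neighborhood of $(x_{0},\overline{x_{0}})$. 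By the holomorphic uniqueness principle on the connected domain $X\times X^{\ast}$, $F\equiv 0$ everywhere, which is the content of the cited ``special uniqueness principle'' from \cite{bm}. Substituting $y=\overline{w}$ gives $L(x,w)=M(x,w)$ for all $x,w\in X$, i.e. $L=M$.

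The only delicate point is the linear-independence / totally-real uniqueness step; everything else is bookkeeping. That step is standard (and is exactly what is being quoted from \cite[II.4, Theorem 7]{bm}), so the proof is short provided one is willing to invoke it.
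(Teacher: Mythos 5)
Your proof is correct. Note that the paper itself offers no argument for this proposition: it is quoted as a known uniqueness principle from \cite[II.4, Theorem 7]{bm} (see also \cite{nik}). What you have done is supply a self-contained proof of that quoted fact: complexify to the holomorphic function $F\left(x,y\right)=L\left(x,\overline{y}\right)-M\left(x,\overline{y}\right)$ on $X\times X^{*}$, show that vanishing on the antidiagonal forces all Taylor coefficients $c_{\alpha\beta}$ to vanish, and then spread the local conclusion by the ordinary uniqueness principle on the connected domain $X\times X^{*}$. This is exactly the standard proof of the cited theorem, so the mathematical content agrees; what your version buys is independence from the reference and an explicit account of how the ``somewhere dense'' hypothesis enters, which the citation leaves implicit. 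Two minor observations. First, your intermediate step upgrading $\widehat{L}=\widehat{M}$ from $U$ to all of $X$ via real-analytic uniqueness is harmless but redundant: since $\overline{U}$ has nonempty interior, you may run the power-series argument directly at a point $x_{0}\in \Int\overline{U}\cap X$ and then invoke holomorphic uniqueness on $X\times X^{*}$ once. Second, in the coefficient-comparison step, ``linear independence of the monomials $z^{\alpha}\overline{z}^{\beta}$'' is not quite enough for an infinite series; what you really use is uniqueness of coefficients of an absolutely convergent power series, obtained either from the invertible linear substitution relating $\left(z,\overline{z}\right)$ to the real coordinates of $\C^{n}$, or by applying the Wirtinger operators $\partial_{z}^{\alpha}\partial_{\overline{z}}^{\beta}$ at the center term by term. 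Both points are standard and do not affect the validity of your argument.
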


From applying the proposition to $L$ and $L^{*}$, we get that a sesqui-holomorphic bi-function $L$ is Hermitean if and only if $\widehat{L}$ is real-valued on a somewhere dense set. This observation leads to the following result.

\begin{lemma}\label{real}
Let $L$ and $M$ be non-degenerate sesqui-holomorphic bi-functions on a domain $X$. If $\frac{\widehat{M}}{\widehat{L}}$ is real-valued on a somewhere dense set, then either $\frac{\widehat{M}}{\widehat{L}}>0$ on $X$, or $\frac{\widehat{M}}{\widehat{L}}<0$ on $X$.
\end{lemma}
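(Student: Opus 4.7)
The plan is to first upgrade the hypothesis ``$\widehat{M}/\widehat{L}$ is real-valued on a somewhere dense set $U$'' to ``$\widehat{M}/\widehat{L}$ is real-valued on all of $X$'', and then conclude by a connectedness/intermediate-value argument, using that $\widehat{L}$ and $\widehat{M}$ do not vanish. The main difficulty, and the only genuinely non-trivial point, is the first step: one cannot apply Proposition \ref{rig} directly to the ratio $\widehat{M}/\widehat{L}$, because it is not the diagonal of any sesqui-holomorphic bi-function. The trick will be to clear the denominator and instead compare diagonals of two auxiliary sesqui-holomorphic bi-functions whose diagonals are $\widehat{M}\,\overline{\widehat{L}}$ and $\overline{\widehat{M}}\,\widehat{L}$ respectively.

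Concretely, I would introduce
\[
N_{1}(x,y) := M(x,y)\,\overline{L(y,x)}, \qquad N_{2}(x,y) := \overline{M(y,x)}\,L(x,y).
\]
A direct check of the four cases (holomorphicity of $M$ in $x$, of $\overline{L(y,x)}$ in $x$, etc.) shows that both $N_{1}$ and $N_{2}$ are sesqui-holomorphic, and by definition
\[
\widehat{N_{1}}(x)=\widehat{M}(x)\,\overline{\widehat{L}(x)}, \qquad \widehat{N_{2}}(x)=\overline{\widehat{M}(x)}\,\widehat{L}(x).
\]
Since $\widehat{L}$ is non-vanishing, the equation $\widehat{N_{1}}=\widehat{N_{2}}$ at a point is equivalent to $\widehat{M}/\widehat{L}$ being real at that point. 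Hence the hypothesis is exactly that $\widehat{N_{1}}=\widehat{N_{2}}$ on the somewhere dense set $U$, and Proposition \ref{rig} applied to the sesqui-holomorphic bi-functions $N_{1}$ and $N_{2}$ forces $N_{1}\equiv N_{2}$ on all of $X$, whence $\widehat{M}/\widehat{L}$ is real-valued throughout $X$.

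Having upgraded to all of $X$, I finish by observing that $\widehat{M}/\widehat{L}$ is a continuous (in fact real-analytic), real-valued, non-vanishing function on the connected set $X$; by the intermediate value theorem it must be of constant sign, which is the claimed dichotomy. The only place where anything could go wrong is the bookkeeping of which variable is holomorphic versus antiholomorphic when forming $N_{1}$ and $N_{2}$, and I expect this to be entirely routine once written out.
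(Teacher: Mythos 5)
Your proof is correct, and while it rests on the same key objects as the paper's proof, it is structured in a genuinely cleaner way. The paper also ends up comparing $ML^{*}$ with $M^{*}L$ (your $N_{1}$ and $N_{2}$), but it arrives there indirectly: it first localizes to an open connected neighborhood $V$ of a point of $U$ on which neither $L$ nor $M$ vanishes (so that the quotient $K=\frac{M}{L}$ is a well-defined sesqui-holomorphic bi-function), then invokes the criterion ``$K$ is Hermitean iff $\widehat{K}$ is real-valued on a somewhere dense set'' (itself a consequence of Proposition \ref{rig} applied to $K$ and $K^{*}$) to get $ML^{*}=M^{*}L$ on $V$, and finally uses the ordinary Uniqueness Principle to propagate this identity from $V$ to all of $X$ before restricting to the diagonal. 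Your version clears the denominator from the outset: since $N_{1}=ML^{*}$ and $N_{2}=M^{*}L$ are globally defined and sesqui-holomorphic (your bookkeeping of holomorphic versus anti-holomorphic variables is right), and since $\widehat{N_{1}}=\widehat{N_{2}}$ at a point is equivalent to $\frac{\widehat{M}}{\widehat{L}}$ being real there (divide by $\left|\widehat{L}\right|^{2}\ne 0$), a single global application of Proposition \ref{rig} replaces the paper's localization step \emph{and} its second appeal to a uniqueness principle. What your approach buys is economy and transparency: it makes visible that the ``somewhere dense'' hypothesis is exactly the hypothesis of Proposition \ref{rig}, and it never needs $L$ or $M$ to be non-vanishing off the diagonal. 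The concluding connectedness argument is the same in both proofs.
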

\begin{proof}
We may assume that there is a nonempty open set $U\subset X$ such that $\frac{\widehat{M}}{\widehat{L}}$ is real-valued on $U$. Fix $z\in U$. Since $L\left(z,z\right)\ne 0\ne M\left(z,z\right)$ and $L$ and $M$ are continuous, there is an open connected neighborhood $V\subset U$ of $z$ such that neither $L$ nor $M$ vanish on $V\times V$. Then, $K=\frac{M}{L}$ is a sesqui-holomorphic bi-function on $V$, such that $\widehat{K}$ is real-valued. From the discussion above it follows that $K$ is Hermitean, and so $ML^{*}=M^{*}L$ on $V$. Since both of these bi-functions are sesqui-holomorphic, from the (regular) Uniqueness Principle $ML^{*}=M^{*}L$ on $X$, and so $M\left(x,x\right)\overline{L\left(x,x\right)}=\overline{M\left(x,x\right)}L\left(x,x\right)$, for every $x\in X$. Therefore, $\frac{\widehat{M}}{\widehat{L}}$ is real-valued on $X$. Since $X$ is connected, and $\frac{\widehat{M}}{\widehat{L}}$ is continuous and non-vanishing, it is either always positive, or always negative.
\end{proof}

Now we can state a criterion of rescaling of sesqui-holomorphic functions.

\begin{theorem}\label{rrig}
Let $L$ and $M$ be sesqui-holomorphic non-degenerate bi-functions on a domain $X\subset\C^{n}$. The following are equivalent:
\item[(i)] $L$ and $M$ are Hermitean rescalings (on a somewhere dense subset of $X$);
\item[(ii)] There is a nonempty open set $U\subset X$ and a holomorphic function $f:U\to\Cp$, such that $\widehat{M}=\left|f\right|^{2}\widehat{L}$ on $U$;
\item[(iii)] There is a nonempty open set $U\subset X$ such that $\frac{\partial^{2}}{\partial z_{j}\partial \overline{z_{k}}}\log \widehat{L}=\frac{\partial^{2}}{\partial z_{j}\partial \overline{z_{k}}}\log \widehat{M}$, on $U$, for all $j,k\in\overline{1,n}$, and $\frac{\widehat{M}}{\widehat{L}}>0$ on a somewhere dense set;
\item[(iv)] There are $y,z\in X$ and a somewhere dense set $U\subset X$ such that $\frac{M\left(z,z\right)}{L\left(z,z\right)}>0$ and $\frac{\left|L\left(x,y\right)\right|^{2}}{L\left(x,x\right)L\left(y,y\right)}=\frac{\left|M\left(x,y\right)\right|^{2}}{M\left(x,x\right)M\left(y,y\right)}$, for every $x\in U$.
\end{theorem}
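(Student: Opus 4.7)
The plan is to run the cycle (i)$\Rightarrow$(iv)$\Rightarrow$(iii)$\Rightarrow$(ii)$\Rightarrow$(i). The first step (i)$\Rightarrow$(iv) is a direct computation: if $M=f\otimes\overline{f}\,L$ for a non-vanishing $f:X\to\Cp$ (automatically holomorphic, by the opening proposition of this section applied to the sesqui-holomorphic $L$), then $\widehat{M}/\widehat{L}=|f|^{2}>0$ on $X$, and the factors $|f(x)|^{2}|f(y)|^{2}$ cancel from both sides of the ratio identity in (iv), so any $y,z\in X$ and $U=X$ witness (iv).

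For (iv)$\Rightarrow$(iii), rewrite the equation in (iv) as $\frac{\widehat{M}(x)}{\widehat{L}(x)}\cdot\frac{\widehat{M}(y)}{\widehat{L}(y)}=\frac{|M(x,y)|^{2}}{|L(x,y)|^{2}}>0$ for $x\in U$ with $y$ fixed; setting $c=\widehat{M}(y)/\widehat{L}(y)$, this reads $c\,\widehat{M}/\widehat{L}>0$ on $U$. Since $cM$ is sesqui-holomorphic, Lemma \ref{real} applied to $L$ and $cM$ forces $c\,\widehat{M}/\widehat{L}$ to have constant sign on $X$, which is positive since $U\ne\varnothing$; evaluating at $z$ pins down $c>0$, so $\widehat{M}/\widehat{L}>0$ on $X$. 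For the mixed-partial identity, take logarithms in the equation of (iv) on the complement in $U$ of the nowhere dense analytic set $\{L(\cdot,y)M(\cdot,y)=0\}$: the summands $\log|L(x,y)|^{2}$ and $\log|M(x,y)|^{2}$ are pluriharmonic in $x$ and the $y$-only summand is constant in $x$, so applying $\partial_{z_{j}}\partial_{\overline{z_{k}}}$ in $x$ gives $\partial_{z_{j}}\partial_{\overline{z_{k}}}\log\widehat{L}=\partial_{z_{j}}\partial_{\overline{z_{k}}}\log\widehat{M}$ on a somewhere dense subset of $X$; both sides being real-analytic on $X$, the identity theorem for real-analytic functions gives the equation on all of $X$, establishing (iii).

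For (iii)$\Rightarrow$(ii), the main obstacle, apply Lemma \ref{real} once more to upgrade $\widehat{M}/\widehat{L}>0$ on a somewhere dense set to $\rho:=\widehat{M}/\widehat{L}>0$ on all of $X$; then $\log\rho$ is real-valued and real-analytic on $X$, and the hypothesis of (iii) rewrites as $\partial_{z_{j}}\partial_{\overline{z_{k}}}\log\rho=0$ on $U$, i.e.\ $\log\rho$ is pluriharmonic on $U$. On any ball $V\subset U$, the local $\partial\overline{\partial}$-lemma produces a holomorphic $h:V\to\C$ with $\log\rho=h+\overline{h}$, and $f:=e^{h}:V\to\Cp$ is then holomorphic with $|f|^{2}=\rho$ on $V$, which is (ii). Finally, for (ii)$\Rightarrow$(i), shrink $U$ to a ball where $f$ is defined and set $N=f\otimes\overline{f}\,L$ on $U\times U$; $N$ is sesqui-holomorphic with $\widehat{N}=|f|^{2}\widehat{L}=\widehat{M}$ on $U$, so Proposition \ref{rig} applied on the domain $U$ gives $N=M$ on $U\times U$, which means $L$ and $M$ are Hermitean rescalings on the open set $U$, and Remark \ref{logl} promotes this to Hermitean rescalings on all of $X$. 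The essential technical input is the pluriharmonic-to-holomorphic passage in (iii)$\Rightarrow$(ii), namely the local $\partial\overline{\partial}$-lemma that produces $h$ with $\log\rho=h+\overline{h}$.
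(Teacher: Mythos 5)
Your proposal is correct in substance, but it is organized quite differently from the paper's proof, and the difference is worth spelling out. The paper uses a hub structure centered on (ii): the implications (i)$\Rightarrow$(ii), (i)$\Rightarrow$(iii), (i)$\Rightarrow$(iv) are all direct computations (the paper notes (iv) is just \ref{*''} with $x=z$); (ii)$\Rightarrow$(i) is Proposition \ref{rig} plus Remark \ref{logl}, exactly as you do it; (iii)$\Rightarrow$(ii) is Lemma \ref{real} plus the pluriharmonic-to-$\left|f\right|^{2}$ fact (your $\partial\overline{\partial}$-lemma step, for which the paper cites Rudin); and, crucially, (iv)$\Rightarrow$(ii) is proved \emph{directly} by exhibiting an explicit candidate $f\left(x\right)=\alpha^{-1/2}M\left(x,y\right)/L\left(x,y\right)$ on $V=U\cap U_{y}$, where $\alpha=M\left(y,y\right)/L\left(y,y\right)$ is shown to be positive by the same Lemma \ref{real} argument you give. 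You instead run a single cycle (i)$\Rightarrow$(iv)$\Rightarrow$(iii)$\Rightarrow$(ii)$\Rightarrow$(i), so your novel contribution is (iv)$\Rightarrow$(iii): you observe that on $U\cap U_{y}$ the ratio $\widehat{M}/\widehat{L}$ is a constant times $\left|M\left(\cdot,y\right)/L\left(\cdot,y\right)\right|^{2}$, hence $\log$-pluriharmonic there. This is sound, but it funnels all of (iv)'s content through the heaviest step, (iii)$\Rightarrow$(ii), i.e. through the $\partial\overline{\partial}$-lemma, whereas the paper's explicit formula makes (iv)$\Rightarrow$(ii) elementary and constructive; what your cycle buys in return is a tidier logical architecture and the (mildly interesting) standalone implication (iv)$\Rightarrow$(iii).

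Two repairs are needed in your (iv)$\Rightarrow$(iii), both minor. First, the rearrangement $c\,\widehat{M}/\widehat{L}=\left|M\left(\cdot,y\right)/L\left(\cdot,y\right)\right|^{2}>0$ is only valid on $U\cap U_{y}$, not on all of $U$ as written; this is harmless because $U_{y}$ is open and dense (non-degeneracy plus sesqui-holomorphy), so the intersection is still somewhere dense and Lemma \ref{real} applies. Second, and more importantly as a matter of logic: you differentiate the logarithmic identity on a somewhere dense set and only afterwards invoke the real-analytic identity theorem. One cannot apply $\partial_{z_{j}}\partial_{\overline{z_{k}}}$ to an identity known only on a non-open set, so as written the step is circular. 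The fix is a one-line reordering: both sides of the equation in (iv) are real-analytic in $x$, so the identity first extends from $U$ to the nonempty open set $\Int\overline{U}$ (indeed to all of $X$), and only then do you take local logarithms and differentiate on the open set $\Int\overline{U}\cap U_{y}\cap\left\{M\left(\cdot,y\right)\ne 0\right\}$.
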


Note that the local equivalence of (i) and (iii) is known as Calabi Rigidity (see \cite{cal}). Also note that in (iv) we do not require $y,z\in U$.

\begin{proof}
From Remark \ref{logl} being Hermitean rescalings is the same as being Hermitean rescalings on somewhere dense subset of $X$. (i)$\Rightarrow$(ii) is trivial; both (i)$\Rightarrow$(iii) and (i)$\Rightarrow$(iv) follow from a simple calculation. In fact, (iv) is just a rearrangement of \ref{*''} for $x=z$. (ii)$\Rightarrow$(i) follows from Proposition \ref{rig} applied to $M$ and $f\otimes \overline{f} L$.

(iii)$\Rightarrow$(ii): From the preceding lemma, the conditions of (ii) mean that $\log \frac{\widehat{M}}{\widehat{L}}$ is pluriharmonic on some open set $V$. Then, there is a nonempty open set $W\subset V$ and a holomorphic function $f:V\to\Cp$, such that $\left|f\right|^{2}=\frac{\widehat{M}}{\widehat{L}}$ on $W$ (see \cite[Theorem 4.4.4]{rudin}).\medskip

(iv)$\Rightarrow$(ii): We may assume that $U$ is open. Since $U_{y}$ is dense in $X$, it follows that $V=U\cap U_{y}$ is a nonempty open set. Let $\alpha= \frac{M\left(y,y\right)}{L\left(y,y\right)}\ne 0$. For any $x\in V$ we have $\frac{\alpha M\left(x,x\right)}{L\left(x,x\right)}=\left|\frac{M\left(x,y\right)}{L\left(x,y\right)}\right|^{2}$, and so $\frac{\alpha\widehat{M}}{\widehat{L}}$ is positive on $V$. Therefore, from the preceding lemma $\frac{\alpha\widehat{M}}{\widehat{L}}$ is positive on $X$. In particular, we have that $\alpha^{2}=\alpha\frac{M\left(y,y\right)}{L\left(x,y\right)}>0$ and so $\alpha\in\R$. Also, $\alpha\frac{M\left(z,z\right)}{L\left(z,z\right)}>0$, and since $\frac{M\left(z,z\right)}{L\left(z,z\right)}>0$ it follows that $\alpha>0$.

Define $f:V\to\Cp$ by $f\left(x\right)=\alpha^{-\frac{1}{2}}\frac{M\left(x,y\right)}{L\left(x,y\right)}$. Then for $x\in V$ we have \linebreak $\left|f\left(x\right)\right|^{2}=\alpha^{-1}\left|\frac{M\left(x,y\right)}{L\left(x,y\right)}\right|^{2}=\alpha^{-1} \frac{\alpha M\left(x,x\right)}{L\left(x,x\right)}$, and so $\widehat{M}=\left|f\right|^{2}\widehat{L}$ on $V$.
\end{proof}

\begin{remark}
It is easy to see that all the results in this section are valid when $X$ is a general complex manifold.
\qed\end{remark}

\section{Geometric interpretation}\label{geoin}

In this section we will consider geometric consequences of Theorem \ref{similar}. While general bi-functions are devoid of geometric meaning, we will focus on a more special class of them.\medskip

\textbf{Positive semi-definite kernels. }A bi-function $K$ on $X$ is called \emph{positive (semi-) definite} if for any $x_{1},...,x_{n}\in X$, the matrix $\left[K\left(x_{i},x_{j}\right)\right]_{i,j=1}^{n}$ is positive (semi-) definite. Note that positive (semi-) definite bi-functions are traditionally called positive (semi-) definite \emph{kernels}. From the information about positive (semi-) definite matrices, we deduce the following list of properties:
\begin{itemize}
\item From Sylvester's criterion (see \cite[Theorem 7.2.5]{hj}) $K:X\times X\to \C$ is positive (semi-) definite if and only if it is Hermitean and $\det_{K}$ is a positive (non-negative) function on $Fin\left(X\right)$; in this case $K'=\overline{K}$ is also positive (semi-) definite, as well as $\re K$ and $\alpha K$, for any $\alpha >0$.
\item A sum of a positive semi-definite kernel and a positive (semi-) definite kernel is positive (semi-) definite; by Schur's Theorem (see \cite[Theorem 7.5.3]{hj}) a product of positive (semi-) definite kernels is positive (semi-) definite.
\item A pointwise limit of positive semi-definite kernels is positive semi-definite.
\end{itemize}

It also follows from Proposition \ref{pdc} that any Hermitean rescaling of a positive (semi-) definite kernel is positive (semi-) definite; in particular, $\diag f$ is positive (semi-) definite for every $f:X\to\left(0,+\8\right)$ ($f:X\to\left[0,+\8\right)$), and $f\otimes\overline{f}$ is positive semi-definite for any $f:X\to \C$.

Analogously to how a positive semi-definite matrix is a Gram matrix of a collection of vectors in a Euclidean space, a similar representation holds for positive semi-definite kernels. By a Hilbert space we will mean a finite- or infinite-dimensional complete inner product space over either $\R$ or $\C$. We will use the term \emph{unitary} operator for a surjective isometric operator on a (real or complex) Hilbert space. The following is a version of a classic result from \cite{aron}.

\begin{theorem}[Moore-Aronszajn]
Let $X$ be a set. A complex-valued (real-valued) bi-function $K$ on $X$ is positive semi-definite if and only if there is a complex (real) Hilbert space $H$ and a map $\kappa:X\to H$, such that $\overline{\spa~\kappa\left(X\right)}=H$ and $K\left(x,y\right)=\left<\kappa\left(x\right),\kappa\left(y\right)\right>$, for every $x,y\in X$. Furthermore, the pair of $H$ and $\kappa$ is unique up to the unitary equivalence, i.e. if $H_{i}$ and $\kappa_{i}$ satisfy the conditions above, for $i=1,2$, then there is a unitary operator $T:H_{1}\to H_{2}$ such that $T\kappa_{1}=\kappa_{2}$.
\end{theorem}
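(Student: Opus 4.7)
The necessity direction is immediate: if $K\left(x,y\right)=\left<\kappa\left(x\right),\kappa\left(y\right)\right>$ for some $\kappa\colon X\to H$, then for any $x_{1},\ldots,x_{n}\in X$ and scalars $c_{1},\ldots,c_{n}$ we have $\sum_{i,j}c_{i}\overline{c_{j}}K\left(x_{i},x_{j}\right)=\left\|\sum_{i}c_{i}\kappa\left(x_{i}\right)\right\|^{2}\ge 0$, so every Gram matrix of the family $\{\kappa\left(x_{i}\right)\}$ is positive semi-definite. The substance of the theorem lies in the converse and in uniqueness.

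For sufficiency I would perform the standard GNS-type construction. Let $V$ be the vector space of finitely supported complex-valued (respectively, real-valued) functions on $X$; equivalently, $V=\spa\{\delta_{x}:x\in X\}$ inside $\C^{X}$. Define a sesquilinear form on $V$ by
$$\left<f,g\right>_{K}:=\sum_{x,y\in X}f\left(x\right)\overline{g\left(y\right)}K\left(x,y\right),$$
which is a finite sum for any $f,g\in V$. The hypothesis that $K$ is positive semi-definite is exactly the statement $\left<f,f\right>_{K}\ge 0$ for every $f\in V$. The Cauchy--Schwarz inequality (valid for any positive semi-definite sesquilinear form) then shows that $N:=\{f\in V:\left<f,f\right>_{K}=0\}$ coincides with the radical $\{f\in V:\left<f,g\right>_{K}=0\text{ for all }g\in V\}$ and is therefore a subspace. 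The quotient $V/N$ carries a well-defined positive definite inner product; let $H$ be its metric completion, and set $\kappa\left(x\right):=[\delta_{x}]\in H$. Then $\left<\kappa\left(x\right),\kappa\left(y\right)\right>=\left<\delta_{x},\delta_{y}\right>_{K}=K\left(x,y\right)$, and $\spa \kappa\left(X\right)$ is the image of $V$ in $V/N$, which is dense in $H$ by construction. The real case is handled by the same argument with real scalars, using that a real-valued positive semi-definite kernel is automatically symmetric.

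For uniqueness, suppose $(H_{1},\kappa_{1})$ and $(H_{2},\kappa_{2})$ both realize $K$. Define $T_{0}\colon\spa\kappa_{1}\left(X\right)\to\spa\kappa_{2}\left(X\right)$ on formal combinations by $T_{0}\bigl(\sum_{i}c_{i}\kappa_{1}\left(x_{i}\right)\bigr):=\sum_{i}c_{i}\kappa_{2}\left(x_{i}\right)$. The identity
$$\left\|\sum_{i}c_{i}\kappa_{1}\left(x_{i}\right)\right\|^{2}=\sum_{i,j}c_{i}\overline{c_{j}}K\left(x_{i},x_{j}\right)=\left\|\sum_{i}c_{i}\kappa_{2}\left(x_{i}\right)\right\|^{2}$$
simultaneously shows that $T_{0}$ is well defined (two representations of the same vector in $H_{1}$ differ by a vector of norm zero, which is mapped to a vector of norm zero in $H_{2}$) and that $T_{0}$ is a linear isometry on its domain. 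Since $\spa\kappa_{1}\left(X\right)$ is dense in $H_{1}$, $T_{0}$ extends uniquely to a linear isometry $T\colon H_{1}\to H_{2}$, and $T\left(H_{1}\right)$ is a closed subspace containing the dense set $\spa\kappa_{2}\left(X\right)$, hence all of $H_{2}$.

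The only genuinely delicate points are the passage to the quotient and completion in the sufficiency step, and the well-definedness of $T_{0}$ in the uniqueness step; both reduce to the same observation that the quadratic form $f\mapsto\left<f,f\right>_{K}$ depends on the finitely many relevant values of $K$ exactly through the Gram matrix, and the positive semi-definiteness of $K$ is precisely what makes this quadratic form compatible with a Hilbert space norm.
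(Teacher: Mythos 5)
Your proof is correct: it is the standard quotient-and-completion (GNS-type) construction, which is exactly the classical argument behind this theorem — note that the paper itself states the result as a known fact from Aronszajn's work and gives no proof of its own, so there is nothing to diverge from. One small caveat: under the paper's conventions (positive semi-definite matrices, as in Horn--Johnson, are Hermitian/symmetric by definition), the symmetry of a real-valued positive semi-definite kernel is part of the hypothesis rather than something ``automatic''; with the weaker reading of positive semi-definiteness (nonnegativity of the quadratic form over real scalars only) symmetry would genuinely fail, and your construction needs it for Cauchy--Schwarz and the well-definedness of the quotient inner product, so it is worth being explicit that you are invoking the definition there.
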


\begin{remark}\label{ma} In the notations of the theorem, $K$ is non-degenerate if and only if $0_{H}\not\in\kappa\left(X\right)$, and positive definite if and only if $\kappa\left(X\right)$ is a linearly independent set.\medskip\qed\end{remark}

\textbf{Parallelepipeds. }For a finite subset $B=\left\{v_{1},...,v_{n}\right\}$ of a real Hilbert space $H$, we adopt the notations $-B=\left\{-v_{1},...,-v_{n}\right\}$ and $\sum B=\sum\limits_{i=1}^{n}v_{i}$. Also, for $v\in H$ define $v_{B}$ to be the projection of $v$ onto $\spa B$. Then $\|v-v_{B}\|$ is the distance from $v$ to the closed subspace $\spa B$ of $H$. Note that if $A\subset B$, then $\left(v_{B}\right)_{A}=v_{A}$, for every $v\in H$. Assume that $B=\left\{v_{1},...,v_{n}\right\}\subset H$ is linearly independent. The \emph{parallelepiped} defined by $B$ is a set $$P\left(v_{1},...,v_{n}\right)=P\left(B\right)=\left\{\sum_{i=1}^{n}\alpha_{i} v_{i}\left|\alpha_{i}\in\left[0,1\right]\right.\right\}$$ (we will also define $P\left(B\right)=\left\{0_{H}\right\}$ if $B$ is empty or linearly dependent). Then $P\left(B\right)$ is convex, and its extreme points (vertices) are $\left\{\sum A \left|A\subset B\right.\right\}$. Hence, there are $2^{n}$ vertices in total. Note that $\sum A$ is the vertex, located on the opposite ``end'' to the origin in the ``subparallelepiped'' $P\left(A\right)$, which is a face of $P\left(B\right)$. Hence, we have a correspondence between vertices and faces of $P\left(B\right)$. For example, $0_{H}$ corresponds to the face $P\left(\varnothing\right)$ and $P\left(B\right)$ itself corresponds to the vertex $\sum B$.

The set of edges of $P\left(B\right)$ that radiate from $\sum A$ are $-A\cup B\backslash A$. Hence, \linebreak $P\left(-A\cup B\backslash A\right)=P\left(B\right)-\sum A$. In essence, the shift $-\sum A$ accounts for change of the point of $H$ which we call the origin.

While the collection $B$ is reconstructible from $P\left(B\right)$ as a subset of $H$, it is not reconstructible from $P\left(B\right)$ as a geometric figure. Indeed, $P\left(-A\cup B\backslash A\right)$ is a parallel translation of $P\left(B\right)$, for any $A\subset B$, and if $T:H\to H$ is an isometric operator, then $P\left(TB\right)=TP\left(B\right)$ is isometric to $P\left(B\right)$. Below we will show that isometric parallelepipeds can be transformed into each other by a combination of a unitary operator on $H$ and a parallel translation.

Define $V\left(v_{1},...,v_{n}\right)=V\left(B\right)=V_{n}\left(P\left(B\right)\right)$, where $V_{n}$ is the $n$-dimensional Hausdorff measure (we will also adopt a convention that $V\left(\varnothing\right)=0$). Then $$V\left(B\right)^{2}=\det\left[\left<v_{i},v_{j}\right>\right]_{i,j=1}^{n}.$$ In particular, $V\left(v\right)=\|v\|$ and $V\left(v,w\right)=\sqrt{\|v\|^{2}\|w\|^{2}-\left|\left<v,w\right>\right|^{2}}$. Also note that for any $v\in H$ we have

$$\|v-v_{B}\|=\frac{V\left(B\cup \left\{v\right\}\right)}{V\left(B\right)}.$$
\

\textbf{Theorem \ref{similar} for real-valued positive semi-definite kernels. }Let $K$ and $L$ be non-degenerate positive semi-definite kernels over $X$ and let $\kappa:X\to H\backslash\left\{0_{H}\right\}$ and $\lambda:X\to E\backslash\left\{0_{E}\right\}$ be the maps into the Hilbert spaces provided by the Moore-Aronszajn theorem. We can describe the relation of being Hermitean rescalings in terms of $\kappa$ and $\lambda$. Namely, $L=f\otimes \overline{f} K$, for $f:X\to\Cp$, if and only if there is a unitary operator $T:H\to E$ such that $\lambda=f\cdot T\kappa$.

Now assume that $K$ and $L$ are real-valued (and so symmetric), and so $H$ and $E$ are real Hilbert spaces. Then, for any $x_{1},...,x_{n}\in X$ we have $\det_{K}\left(x_{1},...,x_{n}\right)=V\left(\kappa\left(x_{1}\right),...,\kappa\left(x_{n}\right)\right)^{2}$. Hence, if $\lambda=f\cdot \kappa$, for some $f:X\to\R$, it follows from Proposition \ref{pdc} that $\|\lambda\|=\left|f\right|\|\kappa\|$, and more generally

\begin{equation}\label{**}
V\left(\lambda\left(x_{1}\right),...,\lambda\left(x_{n}\right)\right)=\left|f\left(x_{1}\right)\right|...\left|f\left(x_{n}\right)\right|V\left(\kappa\left(x_{1}\right),...,\kappa\left(x_{n}\right)\right).\tag*{($\ast\ast$)}
\end{equation}

Also, $K$ and $L$ are $\pm1$ symmetric rescalings if and only if $\lambda\left(x\right)=\pm T\kappa\left(x\right)$, for every $x\in X$. We can now restate Theorem \ref{similar} in a geometric form.

\begin{theorem}\label{pdsimilar}
Let $H$ be a real Hilbert space and let $\kappa,\lambda:X\to H$ be such that $\overline{\spa~\kappa\left(X\right)}=H$. Then the following are equivalent:
\item[(i)] There is an isometry $T$ on $H$ such that $\lambda\left(x\right)=\pm T\kappa\left(x\right)$, for every $x\in X$;
\item[(ii)] For any (distinct) $x_{1},...,x_{n}\in X$ the parallelepipeds $P\left(\kappa\left(x_{1}\right),...,\kappa\left(x_{n}\right)\right)$ and \linebreak $P\left(\lambda\left(x_{1}\right),...,\lambda\left(x_{n}\right)\right)$ are isometric;
\item[(iii)] For any (distinct) $x_{1},...,x_{n}\in X$ the parallelepipeds $P\left(\kappa\left(x_{1}\right),...,\kappa\left(x_{n}\right)\right)$ and \linebreak $P\left(\lambda\left(x_{1}\right),...,\lambda\left(x_{n}\right)\right)$ have the same volume.
\end{theorem}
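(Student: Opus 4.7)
\textbf{Plan for Theorem \ref{pdsimilar}.} I would prove the cycle of implications $(\mathrm{i}) \Rightarrow (\mathrm{ii}) \Rightarrow (\mathrm{iii}) \Rightarrow (\mathrm{i})$, with the only step of real content being $(\mathrm{iii}) \Rightarrow (\mathrm{i})$, which reduces to Theorem \ref{similar}.

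For $(\mathrm{i}) \Rightarrow (\mathrm{ii})$, given $\lambda(x_i) = \varepsilon_i T\kappa(x_i)$ with $\varepsilon_i \in \{-1,+1\}$, set $S = \{i : \varepsilon_i = -1\}$ and $A = \{T\kappa(x_i) : i \in S\}$, regarded as a subset of $B := \{T\kappa(x_1), \ldots, T\kappa(x_n)\}$. The translation identity $P(-A \cup B \setminus A) = P(B) - \sum A$ recorded in the \emph{Parallelepipeds} subsection yields
\[
P(\lambda(x_1), \ldots, \lambda(x_n)) = T\,P(\kappa(x_1), \ldots, \kappa(x_n)) - T\!\sum_{i \in S}\!\kappa(x_i),
\]
exhibiting the target as an isometric image of the source (the degenerate linearly-dependent case reduces to both sides being $\{0_H\}$ by convention). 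The step $(\mathrm{ii}) \Rightarrow (\mathrm{iii})$ is immediate from the isometry-invariance of Hausdorff measure.

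For $(\mathrm{iii}) \Rightarrow (\mathrm{i})$, set $K(x,y) = \langle \kappa(x), \kappa(y)\rangle$ and $L(x,y) = \langle \lambda(x), \lambda(y)\rangle$; these are real-valued symmetric (in fact positive semi-definite) bi-functions on $X$. The Gram determinant formula $V(v_1,\ldots,v_n)^2 = \det[\langle v_i,v_j\rangle]$ together with hypothesis $(\mathrm{iii})$ gives $\det_K = \det_L$ on $Fin(X)$, so Theorem \ref{similar} produces $f : X \to \{-1,+1\}$ with $L = f \otimes f \cdot K$, i.e.\ $\langle \lambda(x),\lambda(y)\rangle = f(x)f(y)\,\langle \kappa(x),\kappa(y)\rangle$ for every $x,y \in X$.

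To promote this identity to an isometry, I would define $T_0 : \spa \kappa(X) \to H$ by $T_0\kappa(x) = f(x)\lambda(x)$ and extend linearly. The computation
\[
\Bigl\|\sum_i c_i f(x_i)\lambda(x_i)\Bigr\|^2 = \sum_{i,j} c_i c_j\, f(x_i)f(x_j)\, L(x_i, x_j) = \sum_{i,j} c_i c_j\, K(x_i, x_j) = \Bigl\|\sum_i c_i \kappa(x_i)\Bigr\|^2
\]
simultaneously delivers well-definedness of $T_0$ (any null combination on the $\kappa$-side maps to a null combination on the $\lambda$-side) and its isometric property. Continuity extends $T_0$ to a linear isometry $T$ on $H = \overline{\spa \kappa(X)}$, and then $\lambda(x) = f(x)T\kappa(x) = \pm T\kappa(x)$. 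The only genuine obstacle is the implication $(\mathrm{iii}) \Rightarrow (\mathrm{i})$, whose entire content is outsourced to Theorem \ref{similar}; the passage from the rescaling identity $L = f \otimes f \cdot K$ to an actual Hilbert-space isometry is the short formal calculation above.
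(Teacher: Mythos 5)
Your proposal is correct and takes essentially the same approach as the paper: the same cycle $(\mathrm{i})\Rightarrow(\mathrm{ii})\Rightarrow(\mathrm{iii})\Rightarrow(\mathrm{i})$, the same translation identity $P\left(-A\cup B\backslash A\right)=P\left(B\right)-\sum A$ for the first implication, and the same reduction of $(\mathrm{iii})\Rightarrow(\mathrm{i})$ to Theorem \ref{similar} via the Gram determinant formula. The only cosmetic difference is at the very end: where the paper invokes the uniqueness clause of the Moore--Aronszajn theorem to convert the rescaling identity $L=f\otimes f\, K$ into an isometry, you inline the standard proof of that uniqueness (define $T_{0}$ on $\spa~\kappa\left(X\right)$ and extend by continuity), which is a legitimate and equivalent substitute.
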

\begin{proof}
(i)$\Rightarrow$(ii) follows from the fact that for any linearly independent $v_{1},...,v_{n}\in H$ all parallelepipeds $P\left(\pm v_{1},...,\pm v_{n}\right)$ are parallel translations of each other. (ii)$\Rightarrow$(iii) is obvious and (iii)$\Rightarrow$(i) follows from Theorem \ref{similar} and the discussion above (applied to $H$ and $E=\overline{\spa~\lambda\left(X\right)}$).
\end{proof}

\begin{remark} Note that the counterexamples $L_{n}^{\pm}$ and $L_{A}$ considered in Example \ref{ex} are positive definite because Hermitean strictly diagonally dominant matrices with positive diagonal entries are positive definite (see \cite[Corollary 7.2.3]{hj}). In particular, $L_{3}^{\pm}$ correspond to vectors $u^{\pm},v^{\pm},w^{\pm}\in\R^{3}$ of length $2$ and such that the angles between $u^{\pm}$ and $w^{\pm}$ is $\arccos\frac{\pm 1}{4}$, while the angles between each of these vectors and $v^{\pm}$ is $\arccos\frac{1}{4}$. Thus, we need to check the equality of volumes of faces of all dimensions.\qed\end{remark}

We can also state a geometric version of Proposition \ref{pdcr}.

\begin{corollary}\label{pdcrg}
Let $H$ be a real Hilbert space and let $\kappa,\lambda:X\to H\backslash\left\{0_{H}\right\}$ be such that $\overline{\spa~\kappa\left(X\right)}=H$. Then there are an isometry $T$ on $H$ and $g:X\to\Rp$ such that $\lambda=g\cdot T\circ\kappa$ if and only if for any (distinct) $x_{1},...,x_{n}\in X$ we have $$\|\kappa\left(x_{1}\right)\|...\|\kappa\left(x_{n}\right)\|V\left(\lambda\left(x_{1}\right),...,\lambda\left(x_{n}\right)\right)=\|\lambda\left(x_{1}\right)\|...\|\lambda\left(x_{n}\right)\|V\left(\kappa\left(x_{1}\right),...,\kappa\left(x_{n}\right)\right).$$
\end{corollary}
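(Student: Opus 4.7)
The idea is to recast the volume identity as an equality of determinants of normalized Gram kernels, then apply Theorem \ref{similar} together with Moore-Aronszajn uniqueness. Set $K(x,y)=\left<\kappa(x),\kappa(y)\right>$ and $L(x,y)=\left<\lambda(x),\lambda(y)\right>$; these are real symmetric positive semi-definite kernels with $\widehat{K},\widehat{L}>0$, and $V(\kappa(x_{1}),\ldots,\kappa(x_{n}))^{2}=\det_{K}(x_{1},\ldots,x_{n})$, and similarly for $L$.

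The forward implication is a direct computation. If $\lambda=g\cdot T\kappa$ for an isometry $T$ on $H$ and $g:X\to\Rp$, then $L=g\otimes g\, K$, and Proposition \ref{pdc} gives $V(\lambda(x_{1}),\ldots,\lambda(x_{n}))=\prod_{i}|g(x_{i})|\cdot V(\kappa(x_{1}),\ldots,\kappa(x_{n}))$. Combined with $\|\lambda(x)\|=|g(x)|\|\kappa(x)\|$, both sides of the stated identity equal $\prod_{i}|g(x_{i})|\|\kappa(x_{i})\|\cdot V(\kappa(x_{1}),\ldots,\kappa(x_{n}))$.

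For the converse, I would first normalize: set $K'(x,y)=K(x,y)/(\|\kappa(x)\|\|\kappa(y)\|)$ and $L'(x,y)=L(x,y)/(\|\lambda(x)\|\|\lambda(y)\|)$, each of unit diagonal. Factoring the normalizing scalars out of the rows and columns of the Gram matrices gives $\det_{K'}(x_{1},\ldots,x_{n})=\det_{K}(x_{1},\ldots,x_{n})/\prod_{i}\|\kappa(x_{i})\|^{2}$ and likewise for $L'$; squaring the hypothesis then yields precisely $\det_{K'}=\det_{L'}$ on $Fin(X)$. Since $K'$ and $L'$ are symmetric, Theorem \ref{similar} supplies $h:X\to\left\{-1,1\right\}$ with $L'=h\otimes h\,K'$.

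Finally I would invoke Moore-Aronszajn uniqueness. The map $\tilde\kappa(x)=h(x)\kappa(x)/\|\kappa(x)\|$ takes values in $H$, has dense span (since $h=\pm 1$), and has Gram kernel $L'$; meanwhile $\lambda'(x)=\lambda(x)/\|\lambda(x)\|$ from $X$ into $E:=\overline{\spa\,\lambda(X)}\subset H$ also has Gram kernel $L'$ with dense span in $E$. Moore-Aronszajn then furnishes a unitary $T:H\to E$ with $T\tilde\kappa=\lambda'$; viewed as an operator on $H$, $T$ is an isometry in the same sense used in Theorem \ref{pdsimilar}. Unwinding yields $\lambda(x)=(h(x)\|\lambda(x)\|/\|\kappa(x)\|)\cdot T\kappa(x)$, so $g(x):=h(x)\|\lambda(x)\|/\|\kappa(x)\|\in\Rp$ completes the decomposition. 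The only non-routine point is the reduction to $\det_{K'}=\det_{L'}$, which is merely a row/column rescaling; after that, the corollary is essentially the Moore-Aronszajn-packaged version of Theorem \ref{similar}.
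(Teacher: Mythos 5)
Your proof is correct, and in essence it is the paper's route: the paper gives no separate argument for this corollary, presenting it as the geometric form of Proposition \ref{pdcr}, whose proof is exactly your scheme (rescale so that the determinants of the Gram kernels coincide, invoke Theorem \ref{similar} to obtain a $\pm 1$ symmetric rescaling, and convert the kernel identity into an isometry via Moore--Aronszajn uniqueness, using the dictionary set up before Theorem \ref{pdsimilar}). There is, however, one genuine difference, and it favors your write-up. Proposition \ref{pdcr} carries the hypothesis that $\det_{L}$ does not vanish --- geometrically, that $\kappa\left(X\right)$ be linearly independent --- which the corollary as stated does not assume; a literal citation of that proposition would therefore not cover the case of linearly dependent $\kappa\left(x_{i}\right)$. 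Your unit-diagonal normalization $K'\left(x,y\right)=K\left(x,y\right)/\left(\|\kappa\left(x\right)\|\|\kappa\left(y\right)\|\right)$, $L'\left(x,y\right)=L\left(x,y\right)/\left(\|\lambda\left(x\right)\|\|\lambda\left(y\right)\|\right)$ divides only by diagonal values, which are nonzero because $\kappa$ and $\lambda$ avoid $0_{H}$, so the identity $\det_{K'}=\det_{L'}$ makes sense and Theorem \ref{similar} applies even when the Gram determinants vanish; thus you prove the corollary in its stated generality rather than under an implicit positive-definiteness assumption. Two small points to make explicit: squaring the volume identity is reversible because both sides are non-negative, and the operator you obtain is a unitary $T:H\to \overline{\spa~\lambda\left(X\right)}$ regarded as a (not necessarily surjective) isometry on $H$ --- which is precisely the paper's usage of the term.
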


\medskip

\textbf{Equality of parallelepipeds. }In the particular case when $X$ is finite, Theorem \ref{pdsimilar} turns into a criterion for equality of parallelepipeds.

\begin{theorem}\label{paral}
Two parallelepipeds with equal volumes of the corresponding faces of all dimensions are isometric.
\end{theorem}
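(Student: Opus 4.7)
The plan is to reduce the theorem directly to Theorem \ref{pdsimilar}. Write the two parallelepipeds as $P_1=P(v_1,\ldots,v_n)$ and $P_2=P(w_1,\ldots,w_n)$, and let $X=\overline{1,n}$. Interpreting corresponding faces as those indexed by a common subset of edges, the hypothesis becomes $V(v_i:i\in S)=V(w_i:i\in S)$ for every $S\subseteq X$.

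The first step is to realize both parallelepipeds inside a single Hilbert space. Let $H_1=\spa(v_1,\ldots,v_n)$ and $H_2=\spa(w_1,\ldots,w_n)$. Since $V(v_i:i\in S)=0$ precisely when the family $(v_i)_{i\in S}$ is linearly dependent, the hypothesis implies that $(v_i)$ and $(w_i)$ have identical matroids of linear dependencies; in particular $\dim H_1=\dim H_2$. Choose any linear isometry $\iota\colon H_2\to H_1$ and set $H=H_1$, $\kappa(i)=v_i$, $\lambda(i)=\iota(w_i)$, so that $\spa\kappa(X)=H$. Because $\iota$ preserves inner products and hence volumes, the hypothesis upgrades to
\[
V(\kappa(x_1),\ldots,\kappa(x_k))=V(\lambda(x_1),\ldots,\lambda(x_k))
\]
for every collection of distinct $x_1,\ldots,x_k\in X$ (for repeated indices both sides are $0$).

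Theorem \ref{pdsimilar} then supplies a linear isometry $T$ of $H$ and signs $\varepsilon_i\in\{-1,+1\}$ with $\iota(w_i)=\varepsilon_i\, T v_i$ for every $i$. By the observation already recorded in the section that $P(\varepsilon_1 v_1,\ldots,\varepsilon_n v_n)$ is a parallel translate of $P_1$ (namely $P_1-\sum_{\varepsilon_i=-1}v_i$), we obtain
\[
\iota(P_2)=P(\iota(w_1),\ldots,\iota(w_n))=T\bigl(P(\varepsilon_1 v_1,\ldots,\varepsilon_n v_n)\bigr),
\]
which is the image of $P_1$ under an isometry of $H$ composed with a translation. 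Since $\iota$ itself is an isometry carrying $P_2$ onto $\iota(P_2)$, chaining these isometries yields the desired isometry $P_1\cong P_2$. The only genuinely non-routine point is the matroid/dimension matching used in the reduction; once both parallelepipeds live in a common Hilbert space, Theorem \ref{pdsimilar} does all the work, and the sign ambiguity is absorbed by the translation observation.
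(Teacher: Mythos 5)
Your proof is correct, but it is not the proof the paper actually writes out. What you do --- embed both parallelepipeds in one space via a linear isometry $\iota\colon H_{2}\to H_{1}$ (legitimate, since vanishing of Gram determinants matches the linear dependencies and in particular forces equal dimensions), apply (iii)$\Rightarrow$(i) of Theorem \ref{pdsimilar} to get $\iota\left(w_{i}\right)=\varepsilon_{i}Tv_{i}$, and absorb the signs through the identity $P\left(-A\cup B\backslash A\right)=P\left(B\right)-\sum A$ --- is precisely the reduction the paper alludes to in the sentence preceding the theorem, where Theorem \ref{paral} is presented as the finite case of Theorem \ref{pdsimilar}; every step of your assembly checks out. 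The paper, however, deliberately gives a different proof: Theorem \ref{pdsimilar} rests on Theorem \ref{similar}, whose proof is combinatorial (cycles in the graph $X_{L}$, Lemma \ref{matrix}), and the author announces in the introduction the intent to prove this purely geometric statement by geometric means, independently of that machinery. The paper's argument is an induction on the number of edge vectors: the inductive hypothesis supplies a unitary $S$ with $Sw_{i}=\pm v_{i}$ for $i\ge 1$; setting $v_{0}'=Sw_{0}$, equality of face volumes gives $\|\left(v_{0}\right)_{A}\|=\|\left(v_{0}'\right)_{A}\|$ for every $A\subset B$, hence $u_{A}\perp w_{A}$ where $2u=\left(v_{0}\right)_{B}+\left(v_{0}'\right)_{B}$ and $2w=\left(v_{0}\right)_{B}-\left(v_{0}'\right)_{B}$; Lemma \ref{chain} (proved via the notion of chains) then splits $B$ into mutually orthogonal blocks supporting $u$ and $w$, which permits a correcting unitary $Q$, and $T=QS$ finishes the step. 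So your route is shorter and leverages the heavy lifting already done elsewhere in the paper, while the paper's route buys a self-contained geometric proof --- one which, read backwards through the Gram-matrix correspondence, also reproves Theorem \ref{simila} for real positive definite matrices without any combinatorics.
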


Since the result above is a purely geometric fact it is desirable to have a geometric proof for it. Note that the remark above evidences that the theorem is not a corollary of Minkowski's theorem. We will present a proof which roughly follows the pattern of the proof of Theorem \ref{similar}, but operates with geometric objects instead of matrices and abstract bi-functions. Let $K$ be a positive semi-definite kernel again and let $H$ and $\kappa:X\to H$ be given by Moore-Aronszajn theorem.

Consider the decomposition $X=\bigsqcup\limits_{j\in I} X_{j}$ into the components of the graph $X_{K}$ from the geometric viewpoint. It is clear that if $i\ne j$, then $\kappa\left(X_{i}\right)\perp\kappa\left(X_{j}\right)$, and so $H=\bigoplus \limits_{j\in I} \overline{\spa \kappa\left(X_{j}\right)}$. Therefore, for any collection $\left\{\alpha_{j}\right\}_{j\in I}\subset\T$ the operator $T=\bigoplus \limits_{j\in I}\alpha_{j}Id_{\overline{\spa \kappa\left(X_{j}\right)}}$ is a unitary operator on $H$ such that $\kappa\left(X_{j}\right)$ belongs to the eigenspace of $T$ that corresponds to $\alpha_{j}$. Conversely, if for some $j\in I$ every element of $\kappa\left(X_{j}\right)$ is an eigenvector of an isometry $T:H\to H$, then the corresponding eigenvalues coincide. This observation is an analogue of Proposition \ref{components}.

Let us consider a special class of paths in $X_{K}$. We will call a finite sequence $v_{1},...,v_{n}\in H$ a \emph{chain} (from $v_{1}$ to $v_{n}$) if $v_{i}\perp v_{j}$ whenever $|i-j|>1$ and $v_{i}\not\perp v_{i+1}$, for every $i\in \overline{1,n-1}$. It is clear that if $x$ and $y$ are connected by a path in $X_{K}$, then $\kappa\left(x\right)$ is connected with $\kappa\left(y\right)$ via a chain of elements of $\kappa\left(X\right)$, which corresponds to a minimal path from $x$ to $y$. This configuration has the following property.

\begin{lemma}
If $v_{1},...,v_{n}\in H$ is a chain, then $v_{1},...,v_{n-1}$ are linearly independent.
\end{lemma}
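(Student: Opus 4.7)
The plan is to prove the claim by induction on $n$, using the distinguishing vector $v_n$ as a witness against any potential linear dependence involving $v_{n-1}$.

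For the base cases, $n=1$ gives the empty sequence which is vacuously linearly independent, and $n=2$ requires only that $v_1 \neq 0_H$, which follows from $v_1 \not\perp v_2$. For the inductive step, assume the assertion holds for chains of length strictly less than $n$, and let $v_1, \ldots, v_n$ be a chain of length $n \geq 3$. Observe that the truncated sequence $v_1, \ldots, v_{n-1}$ itself satisfies all the chain conditions (both the orthogonality relations for $|i-j|>1$ and the non-orthogonality $v_i \not\perp v_{i+1}$ for $i \in \overline{1,n-2}$ are inherited). Hence by the inductive hypothesis applied to this chain of length $n-1$, the first $n-2$ of its elements, namely $v_1, \ldots, v_{n-2}$, are linearly independent.

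It remains to show that $v_{n-1} \notin \spa\{v_1, \ldots, v_{n-2}\}$. This is where $v_n$ enters as the key tool: by the chain condition, $v_n \perp v_i$ for every $i \in \overline{1, n-2}$ (since $|i - n| \geq 2$), while $v_n \not\perp v_{n-1}$. Now suppose for contradiction that $v_{n-1} = \sum_{i=1}^{n-2} \alpha_i v_i$ for some scalars $\alpha_i$. Taking the inner product with $v_n$ yields
\[
\langle v_{n-1}, v_n \rangle = \sum_{i=1}^{n-2} \alpha_i \langle v_i, v_n \rangle = 0,
\]
contradicting $v_{n-1} \not\perp v_n$. Therefore $v_{n-1}$ is linearly independent from $v_1, \ldots, v_{n-2}$, and combining with the inductive hypothesis we conclude that $v_1, \ldots, v_{n-1}$ are linearly independent.

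There is no serious obstacle here; the only subtlety is recognizing that the vector $v_n$, which is not itself among the vectors we wish to prove independent, plays the essential role by being simultaneously orthogonal to $v_1, \ldots, v_{n-2}$ and non-orthogonal to $v_{n-1}$. Note also that the statement is sharp: the full sequence $v_1, \ldots, v_n$ need not be linearly independent, as nothing prevents $v_n$ itself from lying in $\spa\{v_1, \ldots, v_{n-1}\}$ (indeed, one could have $n = 3$ with $v_3$ a nonzero multiple of $v_1$, which is consistent with $v_1 \perp v_3$ only in degenerate cases, but in general the asymmetry between the two endpoints of the chain is genuine).
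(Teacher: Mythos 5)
Your proof is correct and rests on exactly the same key idea as the paper's: a linear dependence $v_{k}=\sum_{i<k}\alpha_{i}v_{i}$ is killed by pairing against $v_{k+1}$, which is orthogonal to all of $v_{1},\dots,v_{k-1}$ but not to $v_{k}$. The induction on $n$ is merely packaging — the paper applies this argument directly to every $k$ with $1<k<n$ — so the two proofs are essentially identical (your closing parenthetical about $v_{3}$ being a multiple of $v_{1}$ is muddled, since that would force $v_{1}=0$, but it is a side remark and does not affect the proof).
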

\begin{proof}
We will show that $v_{k}\not\in \spa\{v_{1},...,v_{k-1}\}$ for every $1<k<n$. Assume that $v_{k}=\alpha_{1} v_{1}+...+\alpha_{k-1} v_{k-1}$, where $1<k<n$ and $\alpha_{1},...,\alpha_{k-1}\in \C$. Then each of $v_{1},v_{2},...,v_{k-1}$ are orthogonal to $v_{k+1}$, and so $v_{k}\perp v_{k+1}$, which contradicts the definition of chain.
\end{proof}

\begin{remark} It also follows that $v_{1}\not\in \spa\{v_{2},...,v_{n-1}\}$. Indeed, if $v_{1}=\alpha_{2} v_{2}+...+\alpha_{k} v_{k}$, and $\alpha_{k}\ne 0$, then $v_{k}\in \spa\{v_{1},...,v_{k-1}\}$, and so $k=n$.\qed\end{remark}

Chains help to estimate a dimension corresponding to the components of $X_{K}$, which is also utilized in the proof of the following lemma.

\begin{lemma}\label{chain}
Let $B\subset H$ be finite and linearly independent. Let $u,w\in \spa B$ be such that $u_{A}\perp w_{A}$, for any $A\subset B$. Then there are subsets $B_{u}$ and $B_{w}$ of $B$ such that $u\in \spa B_{u}, w\in\spa B_{w}$ and $B_{u}$, $B_{w}$ and $B\backslash \left(B_{u}\cup B_{w}\right)$ are mutually orthogonal.
\end{lemma}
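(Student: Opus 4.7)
The plan is to first reduce to the case where the graph $\Gamma_B$ on the vertex set $B$, with $v\sim v'$ iff $\langle v,v'\rangle\ne 0$, is connected, and in that case to show that $u=0$ or $w=0$; the decomposition claimed by the lemma will then follow by taking $B_u$ to be the union of the $\Gamma_B$-components on which the orthogonal projection of $u$ is nonzero, $B_w$ the union of components on which the projection of $w$ is nonzero, and $B\setminus(B_u\cup B_w)$ the remaining components.

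For the reduction, if $B=\bigsqcup_i C_i$ is the decomposition into $\Gamma_B$-components, then $\spa B=\bigoplus_i\spa C_i$ orthogonally, and writing $u=\sum_i u_i$, $w=\sum_i w_i$ with $u_i,w_i\in\spa C_i$, for any $A\subset C_i$ one has $P_A u=P_A u_i$ and $P_A w=P_A w_i$, so the hypothesis of the lemma restricts to each component. Granted the connected case, $u_i$ or $w_i$ vanishes for every $i$, and the global decomposition $B=B_u\sqcup B_w\sqcup (B\setminus(B_u\cup B_w))$ is mutually orthogonal and contains $u,w$ in $\spa B_u,\spa B_w$ respectively.

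For the connected case, assume for contradiction that $u,w\ne 0$, and set $B_u=\{v\in B:\langle u,v\rangle\ne 0\}$, $B_w=\{v\in B:\langle w,v\rangle\ne 0\}$; both are nonempty, since otherwise $u$ or $w$ is orthogonal to $\spa B$ and hence zero. Applying the hypothesis to singletons $A=\{v\}$ yields $\langle u,v\rangle\overline{\langle w,v\rangle}=0$, so $B_u\cap B_w=\varnothing$. Since $\Gamma_B$ is connected and $B_u,B_w$ are nonempty and disjoint, pick a shortest path $v_0,v_1,\ldots,v_k$ in $\Gamma_B$ with $v_0\in B_u$ and $v_k\in B_w$; by minimality $v_1,\ldots,v_{k-1}\in B\setminus(B_u\cup B_w)$ and $\langle v_i,v_j\rangle=0$ whenever $|i-j|>1$, so $v_0,\ldots,v_k$ is a chain in the sense of the preceding discussion, and the Gram matrix $G$ of $A=\{v_0,\ldots,v_k\}$ is tridiagonal and positive definite.

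The contradiction comes from applying the hypothesis to this $A$: the coordinate vectors $(\langle u,v_i\rangle)_{i=0}^{k}=(p,0,\ldots,0)$ and $(\langle w,v_i\rangle)_{i=0}^{k}=(0,\ldots,0,q)$ have nonzero entries only at the opposite endpoints. Writing $P_A u=\sum_i c_i v_i$ and solving $Gc=(p,0,\ldots,0)^{T}$ by Cramer's rule, the cofactor expansion along the last column reduces to the determinant of the $k\times k$ submatrix obtained by deleting the first row and last column of $G$; thanks to the tridiagonal structure this submatrix is upper triangular with diagonal $\overline{\langle v_0,v_1\rangle},\ldots,\overline{\langle v_{k-1},v_k\rangle}$, all nonzero. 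Hence $c_k=\pm p\prod_{i=0}^{k-1}\overline{\langle v_i,v_{i+1}\rangle}/\det G\ne 0$, and $\langle P_A u,P_A w\rangle=\langle P_A u,w\rangle=c_k\overline{q}\ne 0$, contradicting the hypothesis. The main obstacle is recognising that the shortest-path choice forces the Gram matrix to be tridiagonal, so that the key cofactor is upper triangular and explicitly nonzero; the rest is combinatorial path-chasing.
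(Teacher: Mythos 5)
Your proof is correct, and it reaches the lemma by a genuinely different route than the paper's. The paper defines $B_{u}$ directly as the set of elements of $B$ joined to $u$ by a chain (so its chains start at the vector $u$ itself, which is not an element of $B$), and the heart of its argument is a strong induction on the combined length of a chain from $u$ and a chain from $w$, proving $B_{u}\perp B_{w}$; the induction step applies the preceding lemma (linear independence of chains) to the projections $u_{A}$, $w_{A}$ and finishes with a dimension count inside $\spa A$. You instead split $B$ into components of its orthogonality graph, reducing to the connected case, and there argue by contradiction: a shortest path $v_{0},\dots,v_{k}$ from the support $\left\{v\in B:\left<u,v\right>\ne 0\right\}$ to the support of $w$ is automatically induced, so the Gram matrix $G$ of $A=\left\{v_{0},\dots,v_{k}\right\}$ is tridiagonal and (by linear independence of $B$) positive definite, and Cramer's rule plus the cofactor expansion gives the coefficient of $v_{k}$ in $u_{A}$ as $\pm p\prod_{i}\overline{\left<v_{i},v_{i+1}\right>}/\det G\ne 0$, whence $\left<u_{A},w_{A}\right>=\left<u_{A},w\right>=c_{k}\overline{q}\ne 0$, contradicting the hypothesis. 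Both proofs exploit the same phenomenon --- the projection of a vector onto the span of an induced path ``propagates'' to the far end --- but you replace the paper's induction and dimension count by a single explicit determinant evaluation. A pleasant by-product of your arrangement is that your chains lie entirely inside $B$, so their linear independence is given and you never need the paper's auxiliary lemma on independence of chains; the price is the (routine) Cramer computation, which the paper's more structural argument avoids. Your singleton step (that $A=\left\{v\right\}$ forces $\left<u,v\right>\overline{\left<w,v\right>}=0$, hence the two supports are disjoint) and your component-wise reassembly are both correct; in fact your component-built $B_{u}$ coincides with the paper's chain-reachable $B_{u}$, since a component has nonzero projection of $u$ exactly when it meets the support of $u$.
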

\begin{proof}
Without loss of generality we may assume that $\spa B=H$.

Define $B_{u}$ to be the set of all $v\in B$ such that there is a chain $u_{0}=u,u_{1},...,u_{n}=v$, where $u_{1},...,u_{n}\in B$, and define $B_{w}$ analogously. It is clear that $u\perp B\backslash B_{u}$, $w\perp B\backslash B_{w}$ and $B_{u}\cup B_{w}\perp B\backslash \left(B_{u}\cup B_{w}\right)$. In order to prove the lemma it is enough to show that $B_{u}\perp B_{w}$, since then we would have $u\in \left(B\backslash B_{u}\right)^{\perp}=\spa B_{u}$ and analogously $w\in \spa B_{w}$.

We need to show that if $u_{1},...,u_{n},w_{1},...,w_{m}\in B$ are such that $u_{0}=u,u_{1},...,u_{n}$ and $w_{0}=w,w_{1},...,w_{m}$ are chains, then $u_{n}\perp w_{m}$. We will use the induction by $m+n$. When $m+n=0$ this follows from $u_{0}=u=u_{B}\perp w_{B}=w=w_{0}$.

Assume the claim holds for $m+n$ and let $A=\left\{u_{1},...,u_{n},u_{n+1},w_{1},...,w_{m}\right\}\subset B$ be such that $u,u_{1},...,u_{n},u_{n+1}$ and $w,w_{1},...,w_{m}$ are chains. Let $u_{0}=u_{A}$ and $w_{0}=w_{A}$. Then it is easy to see that $u_{0}\perp u_{i}$, for $i>2$, and $u_{0}\not\perp u_{1}$, and so $u_{0},u_{1},...,u_{n},u_{n+1}$ is a chain. Hence, by the preceding lemma $u_{0},u_{1},...,u_{n}$ are linearly independent. Analogously, $w_{0},w_{1},...,w_{m-1}$ are also linearly independent (note that this set is empty if $m=0$).

From the hypothesis of induction $u_{i}\perp w_{j}$, when $i\le n$, and so $\left\{u_{0},u_{1},...,u_{n}\right\}\perp\left\{w_{0},w_{1},...,w_{m}\right\}$. All these $m+n+2$ vectors belong to $\spa A$, whose dimension is $m+n+1$. Since $u_{0},u_{1},...,u_{n},w_{0},w_{1},...,w_{m-1}$ are linearly independent, it follows that $w_{m}\in \spa\{w_{0},w_{1},...,w_{m-1}\}$. As all of the vectors in the latter span are orthogonal to $u_{n+1}$, we conclude that $u_{n+1}\perp w_{m}$.
\end{proof}

\textbf{Geometric proof of Theorem \ref{paral}. }Let $\left\{v_{1},...,v_{n}\right\}$ and $\left\{w_{1},...,w_{n}\right\}$ be two linearly independent sets in a real Hilbert space. We will denote the fact that $V\left(v_{i_{1}},...,v_{i_{k}}\right)=V\left(w_{i_{1}},...,w_{i_{k}}\right)$, for any $\left\{i_{1},...,i_{k}\right\}\subset\left\{1,...,n\right\}$ by $\left\{v_{1},...,v_{n}\right\}\approx\left\{w_{1},...,w_{n}\right\}$. Note that $\left\{v_{1},...,v_{n}\right\}\approx\left\{\pm v_{1},...,\pm v_{n}\right\}$, for any $\left\{v_{1},...,v_{n}\right\}$, and any distribution of signs.

After all the preparatory work we have done, let us prove that if $\left\{v_{1},...,v_{n}\right\}\approx\left\{w_{1},...,w_{n}\right\}$, then there is a unitary operator $T:H\to H$ such that $Tw_{i}=\pm v_{i}$, for every $i\in\overline{1,n}$. The proof is done by induction over $n$. For $n=1$ the result holds trivially.

Assume that the claim is true for $n$ and let $\left\{v_{0},v_{1},...,v_{n}\right\}\approx\left\{w_{0},w_{1},...,w_{n}\right\}$. From the hypothesis of induction applied to $\left\{v_{1},...,v_{n}\right\}$ and $\left\{w_{1},...,w_{n}\right\}$ there is a unitary operator $S:H\to H$ such that $Sw_{i}=\pm v_{i}$ for $i\in\overline{1,n}$. Let $v'_{0}=Sw_{0}$. Then $$\left\{v_{0},v_{1},...,v_{n}\right\}\approx\left\{w_{0},w_{1},...,w_{n}\right\}\approx\left\{Sw_{0},Sw_{1},...,Sw_{n}\right\}\approx \left\{v'_{0},v_{1},...,v_{n}\right\},$$ and so for any $A\subset B=\left\{v_{1},...,v_{n}\right\}$ we have $$\|v_{0}-\left(v_{0}\right)_{A}\|V\left(A\right)=V\left(A\cup\left\{v_{0}\right\}\right)=V\left(A\cup\left\{v'_{0}\right\}\right)=\|v'_{0}-\left(v'_{0}\right)_{A}\|V\left(A\right).$$
Since we also have $\|v_{0}\|=V\left(\left\{v_{0}\right\}\right)=V\left(\left\{v'_{0}\right\}\right)=\|v'_{0}\|$, by Pythagoras theorem we conclude that $\|\left(v_{0}\right)_{A}\|=\|\left(v'_{0}\right)_{A}\|$.

Define $2u=\left(v_{0}\right)_{B}+\left(v'_{0}\right)_{B}$ and $2w=\left(v_{0}\right)_{B}-\left(v'_{0}\right)_{B}$. Then $\left(v_{0}\right)_{A}=u_{A}+w_{A}$ and $\left(v'_{0}\right)_{A}=u_{A}-w_{A}$, for any $A\subset B$. Therefore, $\|u_{A}+w_{A}\|=\|u_{A}-w_{A}\|$, and so $u_{A}\perp w_{A}$. By Lemma \ref{chain} there are subsets $B_{u}$ and $B_{w}$ of $B$ such that $u\in \spa B_{u}, w\in\spa B_{w}$ and $B_{u}$, $B_{w}$ and $B\backslash \left(B_{u}\cup B_{w}\right)$ are mutually orthogonal.

Since $v'_{0}-\left(v'_{0}\right)_{B}$ and $v_{0}-\left(v_{0}\right)_{B}$ are both orthogonal to $\spa B$, there is a unitary operator $Q$ on $H$ such that $Qv=v$, when $v\in B_{u}$, $Qv=-v$, when $v\in B\backslash B_{u}$ and $Q\left(v'_{0}-\left(v'_{0}\right)_{B}\right)=v_{0}-\left(v_{0}\right)_{B}$. Thus, $T=QS$ is a unitary operator on $H$ such that $Tw_{i}=QSw_{i}=Q\left(\pm v_{i}\right)=\pm v_{i}$, for $i\in\overline{1,n}$ and $$Tw_{0}=QSw_{0}=Qv'_{0}=T\left(v'_{0}-\left(v'_{0}\right)_{B}+u-w\right)=v_{0}-\left(v_{0}\right)_{B}+u+w=v_{0}.$$

\begin{remark} One may wonder if during the proof of the step of induction we can slightly deform $S$ and find a unitary operator $T$ such that $Tw_{i}=Sw_{i}$, for $i\in\overline{1,n}$ and $Tw_{0}=\pm v_{0}$. This is not always possible: consider $H=\R^{3}$ and let $v_{1}=w_{1}=\left[1,0,0\right]$, $v_{2}=w_{2}=\left[0,1,0\right]$, $v_{3}=\left[1,1,1\right]$ and $w_{3}=\left[1,-1,1\right]$. Then there is no isometry $T$ of $H$, which is identity on the first two coordinates and $Tw_{3}=\pm v_{3}$.\medskip\qed
\end{remark}

\textbf{A characterization of isometries on Hilbert spaces. }We conclude the paper with a continuous version of Theorem \ref{pdsimilar}. First, some remarks about continuous positive semi-definite kernels. It is easy to see from an observation in the beginning of Section \ref{restop}, and from Proposition \ref{conr} that if $L$ and $M$ are positive semi-definite with $L^{2}=M^{2}$, then $L=M$ provided that $L$ and $M$ are rescalings, or do not vanish. The following example shows that the claim is wrong without the extra assumptions.

\begin{example}
If in Example \ref{exa} $K_{0}$ was chosen to be positive definite, then the resulting $K^{\pm}$ are positive definite. Indeed, let $H$ be a Hilbert space and let $\kappa_{n}:\left(-1,1\right)\to H$ be produced from $K_{n}$ by Moore-Aronszajn theorem. Define $\kappa:\R\to \bigoplus\limits_{n\in\Z} H$ by $\kappa=\bigoplus\limits_{n\in\Z}\kappa_{n}$. Then, $K\left(x,y\right)=\left<\kappa\left(x\right),\kappa\left(y\right)\right>$, for $x,y\in \R$, and since $\kappa_{0}\left(\left(-1,1\right)\right)$ is linearly independent, one can easily deduce linear independence of $\kappa\left(\R\right)$. Thus, $K$ is positive definite. As an example of a positive definite kernel that satisfies the conditions of Example \ref{exa} one can take $$K_{0}\left(x,y\right)=\re\frac{\left(1-x^{2}\right)\left(1-y^{2}\right)}{4-e^{\ii\pi\left(x-y\right)}}= \frac{1-x^{2}}{2}\frac{1-y^{2}}{2}\re M\left(\frac{e^{\ii\pi x}}{2},\frac{e^{\ii\pi y}}{2}\right),$$ where $M$ is the classical Szego kernel.\qed
\end{example}

Let us finally characterize restrictions of isometries on weakly connected subsets of a Hilbert space.

\begin{theorem}\label{iso}
Let $H$ be a real Hilbert space and let $B\subset H\backslash\left\{0_{H}\right\}$ be connected in the weak topology of $H$ and such that $\overline{\spa~B}=H$. Let $\Phi:B\to H$ be continuous with respect to the weak topology and such that for any distinct $v_{1},...,v_{n}\in B$ the parallelepipeds $P\left(v_{1},...,v_{n}\right)$ and $P\left(\Phi\left(v_{1}\right),...,\Phi\left(v_{n}\right)\right)$ have the same volume. Then there is an isometry $T$ on $H$ such that $T\left|_{B}\right.=\Phi$.
\end{theorem}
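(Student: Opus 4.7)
The plan is to reduce the statement to the algebraic content of Theorem \ref{similar}, obtaining a sign function $f : B \to \{-1,1\}$ from the volume identities, and then exploit the weak topology to upgrade $f$ to a global constant. To this end, define two real symmetric bi-functions on $B$ by $L(v,w) = \left<v, w\right>$ and $M(v,w) = \left<\Phi(v), \Phi(w)\right>$. For any distinct $v_1, \ldots, v_n \in B$, the square of the $n$-dimensional volume of $P(v_1, \ldots, v_n)$ is the Gram determinant $\det_L(v_1, \ldots, v_n)$, and similarly for $M$ and $\Phi$, so the hypothesis gives $\det_L = \det_M$ on $Fin(B)$. Since both kernels are symmetric, Theorem \ref{similar} produces $f : B \to \{-1, 1\}$ such that
\[
\left<\Phi(v), \Phi(w)\right> = f(v)\, f(w)\, \left<v, w\right> \qquad \text{for all } v, w \in B.
\]

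The main step is to show that $f$ is constant on $B$. Setting $v = w$ in the identity above gives $\|\Phi(v)\| = \|v\| > 0$, so in particular $\Phi(v_0) \neq 0$ for every $v_0 \in B$. Fix $v_0 \in B$ and consider the weakly open set $W_{v_0} = \{v \in H : \left<v, v_0\right> \neq 0\}$. On $W_{v_0} \cap B$, the identity above rearranges (using $f(v_0)^2 = 1$) to
\[
f(v) = f(v_0)\, \frac{\left<\Phi(v), \Phi(v_0)\right>}{\left<v, v_0\right>}.
\]
Both $v \mapsto \left<v, v_0\right>$ and $v \mapsto \left<\Phi(v), \Phi(v_0)\right>$ are weakly continuous on $H$ (the second one because $\Phi$ is weakly continuous and $u \mapsto \left<u, \Phi(v_0)\right>$ is a weakly continuous functional on $H$). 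Hence $f$ is weakly continuous on $W_{v_0} \cap B$, and since it takes values in the discrete set $\{-1, 1\}$, it is locally constant there. Now $v \in W_v$ for every $v \in B$, since $\left<v, v\right> = \|v\|^2 > 0$, so the family $\{W_{v_0} \cap B\}_{v_0 \in B}$ is a relatively weakly open cover of $B$, and $f$ is locally constant on all of $B$. The assumed weak connectedness of $B$ then forces $f$ to be a global constant $\epsilon \in \{-1, 1\}$.

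Since $\epsilon^2 = 1$, we obtain $\left<\Phi(v), \Phi(w)\right> = \left<v, w\right>$ for all $v, w \in B$. Define $T$ on $\spa B$ by $T\!\left(\sum_{i} c_i v_i\right) = \sum_{i} c_i \Phi(v_i)$; inner-product preservation makes this well-defined and linearly isometric on $\spa B$, and since $\overline{\spa B} = H$, it extends by continuity to an isometry $T : H \to H$ with $T|_B = \Phi$. The main obstacle is the middle step, extracting weak continuity of $f$ from only the weak continuity of $\Phi$; the trick is that the \emph{ratio} displayed above needs only weak continuity of its numerator and denominator separately, bypassing the failure of weak continuity of the norm itself.
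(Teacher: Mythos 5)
Your proposal is correct and is essentially the paper's own proof: the paper likewise forms the two Gram bi-functions $K(v,w)=\left<v,w\right>$ and $L(v,w)=\left<\Phi(v),\Phi(w)\right>$, observes that the volume hypothesis gives $\det_{K}=\det_{L}$, and then cites Proposition \ref{conr}(ii), whose internal argument is exactly yours — Theorem \ref{similar} produces the $\pm 1$ rescaling function, which is continuous in the weak topology by the same ratio formula (Proposition \ref{con}) and hence constant by connectedness. The only cosmetic difference is at the end, where the paper invokes the uniqueness clause of the Moore--Aronszajn theorem to obtain the isometry from $K=L$, while you construct it directly by linear extension from $\spa B$; these amount to the same thing.
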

\begin{proof}
Let $K$ and $L$ be bi-functions on $B$ defined by $K\left(v,w\right)=\left<v,w\right>$ and $L\left(v,w\right)=\left<\Phi\left(v\right),\Phi\left(w\right)\right>$. Clearly, both $K$ and $L$ are  non-degenerate and symmetric. From the definition of the weak topology $K$ is separately continuous with respect to the weak topology on $B$, but the same is also true for $L$, as $\Phi$ is continuous with respect to the weak topology.

Since parallelepipeds $P\left(v_{1},...,v_{n}\right)$ and $P\left(\Phi\left(v_{1}\right),...,\Phi\left(v_{n}\right)\right)$ have the same volume, it follows that $\det_{K}=\det_{L}$. Since $B$ endowed with the weak topology is a connected topological space, and $K$ and $L$ are non-degenerate and separately continuous, from the part (ii) of Proposition \ref{conr} we conclude that $K=L$. Hence, according to Moore-Aronszajn theorem there is a unitary operator $T: H\to \overline{\spa \Phi\left(B\right)}$, such that $Tv=\Phi\left(v\right)$, for every $v\in B$.
\end{proof}
\begin{remark}
According to Corollary \ref{comp}, in the event that $B$ is weakly compact there is $l\in\N$ such that it is enough to check the equality of the volumes of the parallelepipeds of dimension up to $l$.\medskip\qed
\end{remark}

The following example shows that we cannot characterize restrictions of linear contractions via the property of non-increasing of volumes of parallelepipeds.

\begin{example}
Let $X=\R$ and let $K\left(x,y\right)=\exp\left(-\left|x-y\right|\right)$, which is the reproducing kernel of the Sobolev space $H$ endowed with the norm $\|f\|^{2}=\frac{1}{2}\int\limits_{R} u^{2}\left(t\right)+\left[u'\right]^{2}\left(t\right)dt$ (see \cite[6.1.6.1, Corollary 16]{bt}), in the sense that the map $\kappa:\R\to H$ such that $\left<f,\kappa\left(x\right)\right>=f\left(x\right)$, for all $x\in \R$ and $f\in H$, also satisfies $\overline{\spa~\kappa\left(X\right)}=H$ and $K\left(x,y\right)=\left<\kappa\left(x\right),\kappa\left(y\right)\right>$, for every $x,y\in X$. One can show that $\kappa$ is a continuous injection from $\R$ into $H$, and so $B=\kappa\left(X\right)$ is connected.

Let $g:\R\to\left(0,1\right]$ be continuous, have a bounded derivative, and satisfy $g\left(0\right)=1$ and $g\left(\frac{1}{2}\right)=a<\frac{2}{\sqrt{e}}-1$. Then $T:H\to H$ defined by $Tf=fg$ is a bounded linear operator. For $x\in \R$ we have $\left<Tf,\kappa\left(x\right)\right>=f\left(x\right)g\left(x\right)=\left<f,\Phi\left(\kappa\left(x\right)\right)\right>$, where $\Phi:\kappa\left(\R\right)\to H$ is defined by $\Phi\left(\kappa\left(x\right)\right)=g\left(x\right)\kappa\left(x\right)$. Hence, $\Phi=\left.T^{*}\right|_{\kappa\left(X\right)}$ is a continuous map. Moreover, from \ref{**} it follows that $\Phi$ does not increase the volumes of parallelepipeds. However, $$\|\kappa\left(\frac{1}{2}\right)-\kappa\left(0\right)\|^{2}=\|\kappa\left(\frac{1}{2}\right)\|^{2}+\|\kappa\left(0\right)\|^{2}-2\left<\kappa\left(1\right),\kappa\left(0\right)\right>=2-\frac{2}{\sqrt{e}},$$ while $\|\Phi\left(\kappa\left(\frac{1}{2}\right)\right)-\Phi\left(\kappa\left(0\right)\right)\|^{2}=1+a^{2}-\frac{2a}{\sqrt{e}}$. Since $0<a<\frac{2}{\sqrt{e}}-1$ it follows that $2-\frac{2}{\sqrt{e}}<1+a^{2}-\frac{2a}{\sqrt{e}}$, and so $\Phi$ is not a contraction.\medskip

It is also possible for a map to be a contraction and a restriction of a bounded linear map on $H$, decrease the volumes of parallelepipeds, and yet not be a restriction of a linear contraction. Let $S:H\to H$ be defined by $\left[Sf\right]\left(x\right)=f\left(\frac{x}{2}\right)$. It is easy to see that $S$ is a bounded operator with $\|S\|\le 2$, but also $\|S\|>1$. In order to justify the second claim consider $f\in H$, which is $1$ on $\left[-1,1\right]$, $0$ on $\left[-\8,-2\right]\cup\left[2,+\8\right]$, and $3-\left|x\right|$ on $\left[-2,-1\right]\cup\left[1,2\right]$. Then $\|f\|^{2}=\frac{7}{3}$, while $\|Sf\|^{2}=\frac{11}{3}$. For every $x\in X$ we have $\left<Sf,\kappa\left(x\right)\right>=f\left(\frac{x}{2}\right)=\left<f,\Psi\left(\kappa\left(x\right)\right)\right>$, where $\Psi:\kappa\left(\R\right)\to \kappa\left(\R\right)$ is defined by $\Psi\left(\kappa\left(x\right)\right)=\kappa\left(\frac{x}{2}\right)$. Hence, $\Psi=\left.S^{*}\right|_{\kappa\left(X\right)}$, while it follows from $\overline{\spa~\kappa\left(X\right)}=H$, that if $\Psi=\left.P\right|_{\kappa\left(X\right)}$, for some bounded linear operator $P$ on $H$, then $P=S^{*}$.

However, $\Psi$ is a contraction and does not increase the volumes of parallelepipeds. Indeed,  $\|\Psi\left(\kappa\left(x\right)\right)-\Psi\left(\kappa\left(y\right)\right)\|^{2}=2-2e^{-\frac{\left|x-y\right|}{2}}< 2-2e^{-\left|x-y\right|}=\|\kappa\left(x\right)-\kappa\left(y\right)\|^{2}$, for any distinct $x,y\in \R$. Also, one can show that if $x_{1}<...<x_{n}$, then $$\det\nolimits_{K}\left(x_{1},...,x_{n}\right)=\left(1-e^{-2\left|x_{2}-x_{1}\right|}\right)...\left(1-e^{-2\left|x_{n}-x_{n-1}\right|}\right),$$
and so $\det_{K}\left(x_{1},...,x_{n}\right)>\det_{K}\left(\frac{x_{1}}{2},...,\frac{x_{n}}{2}\right)$.\medskip
\qed\end{example}

Let us conclude the article with two questions.

\begin{question}Let $H$ be a real Hilbert space with the unit sphere $S$, and let $B\subset S$ be connected in the weak topology of $H$ and such that $\overline{\spa~B}=H$. Let $\Phi:B\to S$ be continuous with respect to the weak topology and such that for any distinct $v_{1},...,v_{n}\in B$ the volume of parallelepiped $P\left(\Phi\left(v_{1}\right),...,\Phi\left(v_{n}\right)\right)$ does not exceed the volume of $P\left(v_{1},...,v_{n}\right)$. Is it true that $\Phi$ is a contraction, i.e. does not increase distances? If yes, is there is a contraction $T$ on $S$ such that $T\left|_{B}\right.=\Phi$?
\end{question}

\begin{question}For which Banach spaces $H$ Theorem \ref{iso} holds?
\end{question}

In order to mimic the proof of the theorem or to see where it fails we need to be able to calculate the volumes of parallelepipeds spanned by a finite set of vectors. Let $v_{1},...,v_{n}\in H$ be linearly independent, and let $\T:\R^{n}\to E=\spa\left\{v_{1},...,v_{n}\right\}$ be the linear transformation such that $Te_{k}=v_{k}$, for every $k\in\overline{1,n}$, where $e_{1},...,e_{n}$ are the standard basis vectors in $\R^{n}$. Then the pull-back $\mu$ of the $n$-dimensional Hausdorff measure on $E$ is translation-invariant and finite on compacts, and so is a scalar multiple of the Lebesgue measure $\lambda_{n}$. Hence,
 $$V\left(v_{1},...,v_{n}\right)=\mu\left(\left[0,1\right]^{n}\right)=\lambda_{n}\left(\left[0,1\right]^{n}\right)\frac{\mu\left(B\right)}{\lambda_{n}\left(B\right)}=\frac{\mu\left(B\right)}{\lambda_{n}\left(B\right)},$$
where $B=T^{-1}\overline{B}_{E}$. Note that $\mu\left(B\right)$ is equal to the the $n$-dimensional Hausdorff measure of the unit ball of the normed space $E$ with respect to the metric generated by the norm, and so is equal to $\lambda_{n}\left(\overline{B}_{\R^{n}}\right)$ (see \cite[Theorem 3.7.5]{thompson}). Thus, the only quantity left to find is $\lambda_{n}\left(B\right)$.

\section{Acknowledgements}

The author wants to thank: his supervisor Nina Zorboska for general guidance; John Urschel, who brought the author's attention to Question \ref{q} and the paper \cite{rkt} in particular; Alexandre Eremenko who brought the author's attention to the Calabi rigidity via \href{http://mathoverflow.net/}{MathOverflow} and Liviu Nicolaescu whose alternative proof of Theorem \ref{paral} see on \href{https://mathoverflow.net/questions/300099/parallelepiped-is-defined-by-the-volumes-of-its-faces}{MathOverflow}.

\begin{bibsection}
\begin{biblist}
\bib{aron}{article}{
   author={Aronszajn, N.},
   title={Theory of reproducing kernels},
   journal={Trans. Amer. Math. Soc.},
   volume={68},
   date={1950},
   pages={337--404},
}
\bib{arsw}{article}{
   author={Arcozzi, N.},
   author={Rochberg, R.},
   author={Sawyer, E.},
   author={Wick, B. D.},
   title={Distance functions for reproducing kernel Hilbert spaces},
   conference={
      title={Function spaces in modern analysis},
   },
   book={
      series={Contemp. Math.},
      volume={547},
      publisher={Amer. Math. Soc., Providence, RI},
   },
   date={2011},
   pages={25--53},
}
\bib{bt}{book}{
   author={Berlinet, Alain},
   author={Thomas-Agnan, Christine},
   title={Reproducing kernel Hilbert spaces in probability and statistics},
   note={With a preface by Persi Diaconis},
   publisher={Kluwer Academic Publishers, Boston, MA},
   date={2004},
   pages={xxii+355},
}
\bib{bm}{book}{
   author={Bochner, Salomon},
   author={Martin, William Ted},
   title={Several Complex Variables},
   series={Princeton Mathematical Series, vol. 10},
   publisher={Princeton University Press, Princeton, N. J.},
   date={1948},
   pages={ix+216},
}
\bib{bc}{article}{
   author={Boussa\"\i ri, Abderrahim},
   author={Chergui, Brahim},
   title={Skew-symmetric matrices and their principal minors},
   journal={Linear Algebra Appl.},
   volume={485},
   date={2015},
   pages={47--57},
}
\bib{cal}{article}{
   author={Calabi, Eugenio},
   title={Isometric imbedding of complex manifolds},
   journal={Ann. of Math. (2)},
   volume={58},
   date={1953},
   pages={1--23},
}
\bib{es}{article}{
   author={Engel, Gernot M.},
   author={Schneider, Hans},
   title={Matrices diagonally similar to a symmetric matrix},
   journal={Linear Algebra Appl.},
   volume={29},
   date={1980},
   pages={131--138},
}
\bib{hj}{book}{
   author={Horn, Roger A.},
   author={Johnson, Charles R.},
   title={Matrix analysis},
   edition={2},
   publisher={Cambridge University Press, Cambridge},
   date={2013},
   pages={xviii+643},
}
\bib{loewy}{article}{
   author={Loewy, Raphael},
   title={Principal minors and diagonal similarity of matrices},
   journal={Linear Algebra Appl.},
   volume={78},
   date={1986},
   pages={23--64},
}
\bib{nik}{article}{
   author={Nikolski, Nikolai},
   title={Distance formulae and invariant subspaces, with an application to
   localization of zeros of the Riemann $\zeta$-function},
   language={English, with English and French summaries},
   journal={Ann. Inst. Fourier (Grenoble)},
   volume={45},
   date={1995},
   number={1},
   pages={143--159},
}
\bib{oeding}{article}{
   author={Oeding, Luke},
   title={Set-theoretic defining equations of the variety of principal
   minors of symmetric matrices},
   journal={Algebra Number Theory},
   volume={5},
   date={2011},
   number={1},
   pages={75--109},
}
\bib{rkt}{article}{
   author={Rising, Justin},
   author={Kulesza, Alex},
   author={Taskar, Ben},
   title={An efficient algorithm for the symmetric principal minor
   assignment problem},
   journal={Linear Algebra Appl.},
   volume={473},
   date={2015},
   pages={126--144},
}
\bib{rudin}{book}{
   author={Rudin, Walter},
   title={Function theory in the unit ball of $\C^{n}$},
   series={Grundlehren der Mathematischen Wissenschaften [Fundamental Principles of Mathematical Science], 241},
   publisher={Springer-Verlag},
   place={New York-Berlin},
   date={1980},
   pages={xiii+436},
}

\bib{thompson}{book}{
   author={Thompson, A. C.},
   title={Minkowski geometry},
   series={Encyclopedia of Mathematics and its Applications},
   volume={63},
   publisher={Cambridge University Press, Cambridge},
   date={1996},
   pages={xvi+346},
}

\end{biblist}
\end{bibsection}

\end{document}